\title[Campana points]{Campana points and powerful values of norm forms}
\subjclass[2020]{14G05 (11D45, 14G10, 11D57).}
\author{Sam Streeter}
\address{School of Mathematics, Fry Building, Woodland Road, Bristol, BS8 1UG}
\email{sam.streeter@bristol.ac.uk}
\theoremstyle{definition}
\newtheorem{mydef}{Definition}
\newtheorem{note}[mydef]{Note}
\newtheorem{remark}[mydef]{Remark}
\theoremstyle{plain}
\newtheorem{theorem}[mydef]{Theorem}
\newtheorem{proposition}[mydef]{Proposition}
\newtheorem{lemma}[mydef]{Lemma}
\newtheorem{corollary}[mydef]{Corollary}
\newtheorem{conjecture}[mydef]{Conjecture}
\numberwithin{mydef}{section}
\numberwithin{equation}{section}
\let\originalleft\left
\let\originalright\right
\renewcommand{\left}{\mathopen{}\mathclose\bgroup\originalleft}
\renewcommand{\right}{\aftergroup\egroup\originalright}
\renewcommand{\Re}{\operatorname{Re}}
\DeclareMathOperator{\ab}{ab}
\DeclareMathOperator{\Cl}{Cl}
\DeclareMathOperator{\degree}{deg}
\DeclareMathOperator{\Gal}{Gal}
\DeclareMathOperator{\Hom}{Hom}
\DeclareMathOperator{\om}{\boldsymbol{\omega}}
\DeclareMathOperator{\Pic}{Pic}
\DeclareMathOperator{\Proj}{Proj}
\DeclareMathOperator{\rank}{rank}
\DeclareMathOperator{\Res}{Res}
\DeclareMathOperator{\Spec}{Spec}
\DeclareMathOperator{\Val}{Val}
\DeclareMathOperator{\vol}{vol}
    \DeclareFontFamily{U}{wncy}{}
    \DeclareFontShape{U}{wncy}{m}{n}{<->wncyr10}{}
    \DeclareSymbolFont{mcy}{U}{wncy}{m}{n}
    \DeclareMathSymbol{\Sh}{\mathord}{mcy}{"58} 
\begin{document}
\begin{abstract}
We give an asymptotic formula for the number of weak Campana points of bounded height on a family of orbifolds associated to norm forms for Galois extensions of number fields. From this formula we derive an asymptotic for the number of elements with $m$-full norm over a given Galois extension of $\mathbb{Q}$. We also provide an asymptotic for Campana points on these orbifolds which illustrates the vast difference between the two notions, and we compare this to the Manin-type conjecture of Pieropan, Smeets, Tanimoto and V\'arilly-Alvarado.
\end{abstract}
\maketitle
\setcounter{tocdepth}{1}
\tableofcontents
\section{Introduction} \label{sectionI}
The theory of Campana points is of growing interest in arithmetic geometry due to its ability to interpolate between rational and integral points. Two competing notions of Campana points can be found in the literature, both extending a definition of ``orbifold rational points'' for curves within Campana's theory of ``orbifoldes g\'eom\'etriques'' in \cite{OSV}, \cite{FMS}, \cite{OG} and \cite{SMAHA}. They capture the idea of rational points which are \emph{integral with respect to a weighted boundary divisor}. These two notions have been termed \emph{Campana points} and \emph{weak Campana points} in the recent paper \cite{CPBH} of Pieropan, Smeets, Tanimoto and V\'arilly-Alvarado, in which the authors initiate a systematic quantitative study of points of the former type on smooth Campana orbifolds and prove a logarithmic version of Manin's conjecture for Campana points on vector group compactifications. The only other quantitative results in the literature are found in \cite{STSN}, \cite{SNH}, \cite{AHDO}, \cite{HMTV} and \cite{CPBC}, and the former four of these indicate the close relationship between Campana points and $m$-full solutions of equations. We recall that, given $m \in \mathbb{Z}_{\geq 2}$, we say that $n \in \mathbb{Z} \setminus \{0\}$ is \emph{$m$-full} if all primes in the prime decomposition of $n$ have multiplicity at least $m$.

In this paper, we bring together the perspectives in the above papers and provide the first result for Campana points on singular orbifolds. As observed in \cite[\S1.1]{CPBH}, the study of weak Campana points of bounded height is challenging and requires new ideas for the regularisation of certain Fourier transforms, and these ideas for the orbifolds in consideration are the main innovation of this paper. We adopt a height zeta function approach, similar to the one employed in \cite{CPBH} and modelled on the work of Loughran in \cite{NVF} and Batyrev and Tschinkel in \cite{RPBH} on toric varieties, in order to prove log Manin conjecture-type results for both types of Campana points on $\left(\mathbb{P}^{d-1}_K,\left(1-\frac{1}{m}\right)Z\left(N_{\om}\right)\right)$, where $Z\left(N_{\om}\right)$ is the zero locus of a norm form $N_{\om}$ associated to a $K$-basis $\om$ of a Galois extension of number fields $E/K$ of degree $d \geq 2$ coprime to $m \in \mathbb{Z}_{\geq 2}$ if $d$ is not prime. Although toric varieties also play a prominent role in \cite{HMTV}, the tori there are split, whereas we shall work with anisotropic tori. When $K = \mathbb{Q}$, we derive from the result for weak Campana points an asymptotic for the number of elements of $E$ of bounded height with $m$-full norm over $\mathbb{Q}$. We compare the result for Campana points to a conjecture of Pieropan, Smeets, Tanimoto and V\'arilly-Alvarado \cite[Conj.~1.1,~p.~3]{CPBH}.
\subsection{Results} \label{sectionR}
\begin{theorem} \label{mainthm}
Let $E/K$ be a Galois extension of number fields of degree $d \geq 2$, and let $m \geq2$ be an integer which is coprime to $d$ if $d$ is not prime. Let $\om$ be a $K$-basis of $E$. Denote by $\Delta^{\om}_m$ the $\mathbb{Q}$-divisor $\left(1-\frac{1}{m}\right)Z\left(N_{\om}\right)$ of $\mathbb{P}^{d-1}_K$ for $N_{\om}$ the norm form corresponding to $\om$. Let $H$ denote the anticanonical height function on $\mathbb{P}^{d-1}_K$ from Definition \ref{bthtdef}. Then there exists an explicit finite set $S\left(\om\right) \subset \Val\left(K\right)$ such that, for any finite set of places $S \supset S\left(\om\right)$, the number $N\left(\left(\mathbb{P}^{d-1}_{K},\Delta^{\om}_m\right),H,B,S\right)$ of weak Campana $\mathcal{O}_{K,S}$-points of height at most $B \in \mathbb{R}_{\geq 1}$ on the orbifold $\left(\mathbb{P}^{d-1}_K,\Delta^{\om}_m\right)$ with respect to the model $\mathbb{P}^{d-1}_{\mathcal{O}_{K,S}}$ of $\mathbb{P}^{d-1}_K$ has the asymptotic formula
$$
N\left(\left(\mathbb{P}^{d-1}_K,\Delta^{\om}_m\right),H,B,S\right) \sim c\left(\om,m,S\right)B^{\frac{1}{m}}\left(\log B\right)^{b\left(d,m\right)-1}
$$
as $B \rightarrow \infty$ for some explicit positive constant $c\left(\om,m,S\right)$, where
$$
b\left(d,m\right) = \frac{1}{d}\left({{d+m-1}\choose{d-1}} - {{m-1}\choose{d-1}}\right).
$$
\end{theorem}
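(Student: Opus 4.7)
The plan is to adopt the height zeta function approach signalled in the introduction, modelled on the toric techniques of Batyrev--Tschinkel and Loughran and the Fourier-analytic framework of \cite{CPBH}, adapted to the present anisotropic, singular setting. The first step is to reformulate the count. Since $E/K$ is Galois the norm form $N_{\om}$ is anisotropic over $K$, so every $K$-point of $\mathbb{P}^{d-1}_K$ automatically lies off $Z(N_{\om})$ and corresponds to some $y = x_1\om_1 + \cdots + x_d\om_d \in E^{\times}$, with $x \in \mathcal{O}_{K,S}^d$ primitive and well-defined up to $\mathcal{O}_{K,S}^{\times}$. The open complement is the torus $T := R_{E/K}\mathbb{G}_m/\mathbb{G}_m$, whose character lattice is the augmentation ideal of $\mathbb{Z}[\Gal(E/K)]$ and hence has no non-zero Galois invariants, so $T$ is anisotropic and $T(\mathbb{A}_K)/T(K)$ is compact. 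Under this identification the weak Campana condition becomes $v(N_{E/K}(y)) \in \{0\} \cup \mathbb{Z}_{\geq m}$ for every $v \notin S$, that is, $N_{E/K}(y)\mathcal{O}_{K,S}$ is $m$-full, and the anticanonical height $H$ controls $\prod_{\sigma,v}|\sigma(y)|_v$.

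I would then form the height zeta function
\[
Z(s) = \sum_{P \in T(K)} H(P)^{-s}\mathbf{1}_{\mathrm{wC}}(P)
\]
and apply Poisson summation on the compact quotient $T(\mathbb{A}_K)/T(K)$, expanding $Z(s)$ as a sum over unitary Hecke characters $\chi$ of a product $\prod_v \widehat{\mathcal{H}}_v(\chi_v,s)$ of local Fourier transforms. At unramified $v \notin S$ one can compute $\widehat{\mathcal{H}}_v(\chi_v,s)$ explicitly by stratifying $T(K_v)$ according to $k = v(N_{E/K}(y))$ and further by the distribution of $k$ across the primes of $E$ above $v$, summing the resulting geometric series in $q_v^{-s}$. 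The weak Campana constraint deletes the strata $1 \leq k \leq m-1$, leaving a tail $\sum_{k\geq m} q_v^{-ks}$ whose convergence abscissa is $s = 1/m$, which identifies $s = 1/m$ as the rightmost candidate singularity coming from the trivial character; archimedean places and places in $S$ contribute standard oscillatory integrals handled by the methods of \cite{CPBH}.

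The main difficulty, explicitly flagged in the introduction as the paper's principal innovation, is the regularisation of the sum over non-trivial $\chi$: each such character also produces an apparent pole at $s = 1/m$, and the naive character sum diverges. To resolve this I would decompose the local integrand via the weak-composition expansion of $k \geq m$ across the primes of $E$ over $v$, recognise the resulting subseries as Hecke $L$-factors for characters of $T$ evaluated at explicit shifts of $s$, and rearrange the global product into a product of $L$-functions raised to integer powers times an Euler product convergent on $\Re(s) > 1/m - \delta$. The exponent
\[
b(d,m) = \tfrac{1}{d}\left(\tbinom{d+m-1}{d-1} - \tbinom{m-1}{d-1}\right)
\]
should then emerge as a Galois-orbit count on the set of degree-$m$ monomials in $d$ variables, with the subtraction $\binom{m-1}{d-1}$ removing the strictly positive compositions (those in which every geometric component of $Z(N_{\om})$ is met, which will instead drive the Campana count and whose exclusion distinguishes the two regimes), and the coprimality hypothesis on $d$ and $m$ when $d$ is composite entering precisely to ensure that no non-trivial cyclic subgroup of $\Gal(E/K)$ fixes any of the relevant monomials, so that the Burnside average is integral.

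Once meromorphic continuation of $Z(s)$ past $\Re(s) = 1/m$ with a pole of order exactly $b(d,m)$ at $s = 1/m$ has been obtained, a standard Tauberian theorem of Delange--Ikehara type yields
\[
N\left(\left(\mathbb{P}^{d-1}_K,\Delta^{\om}_m\right),H,B,S\right) \sim c(\om,m,S)\, B^{1/m}(\log B)^{b(d,m)-1},
\]
with $c(\om,m,S)$ read off as the leading Laurent coefficient of the regularised product. The main obstacle throughout is the character-sum regularisation at $s = 1/m$; the remainder of the argument follows the now-standard template for height zeta functions on toric compactifications.
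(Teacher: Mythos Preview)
Your outline is essentially the paper's proof: height zeta function on the anisotropic torus $T=R_{E/K}\mathbb{G}_m/\mathbb{G}_m$, Poisson summation over automorphic characters, explicit local Fourier transforms truncated by the weak Campana condition, regularisation by a product of Hecke $L$-functions indexed by $G$-orbits on degree-$m$ monomials, and a Tauberian theorem. Your reading of $b(d,m)$ as a Burnside count and of the coprimality hypothesis as guaranteeing free $G$-action on the relevant monomials is exactly how the paper obtains the exponent.

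Two points deserve sharpening. First, the difficulty with non-trivial $\chi$ is not that each one produces a pole at $s=1/m$ --- generically they do not --- but that one must show the infinite sum $\sum_{\chi}\widehat{H}(\phi_m,\chi;-s)(s-1/m)^{b(d,m)}$ converges absolutely and uniformly on compacta of $\Re s\geq 1/m$; this is handled via subconvexity-type bounds on $L(\chi,s)$ together with the lattice structure of $\chi\mapsto\chi_\infty$ and decay of the archimedean transforms. Second, your sketch asserts a pole of order \emph{exactly} $b(d,m)$, but reading off the leading Laurent coefficient does not by itself show it is non-zero: several characters $\chi$ (those in a finite group $\mathcal{U}_0$ of $G$-invariant $m$-torsion characters) can contribute to the top-order pole, and one must rule out cancellation. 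The paper does this by a positivity argument, restricting to a smaller adelic set on which all $\chi\in\mathcal{U}_0$ are trivial and showing the resulting integral already has the full pole.
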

\begin{note}
If $\om$ is a relative integral basis of $E/K$, then $S\left(\om\right) = S_{\infty}$, the set of archimedean places of $K$, in Theorem \ref{mainthm} (see Remark \ref{ribrem}).
\end{note}
Each rational point $P \in \mathbb{P}^{d-1}\left(\mathbb{Q}\right)$ possesses precisely two sets of coordinates in $\mathbb{Z}^d_{\textrm{prim}} = \{\left(x_0,\dots,x_{d-1}\right) \in \mathbb{Z}^d : \gcd\left(x_0,\dots,x_{d-1}\right) = 1\}$. Interpreting $H$ and $N_{\om}$ as functions on this set, we immediately obtain the following result.
\begin{corollary}
Taking $K=\mathbb{Q}$ and letting $\om$ be an integral basis with the notation and hypotheses of Theorem \ref{mainthm}, we have
$$
\#\{x \in \mathbb{Z}^d_{\textrm{prim}} : H\left(x\right) \leq B, \, \textrm{$N_{\om}\left(x\right)$ is $m$-full}\} \sim 2c\left(\om,m,S_{\infty}\right) B^{\frac{1}{m}} \left(\log B\right)^{b\left(d,m\right)-1}.
$$
\end{corollary}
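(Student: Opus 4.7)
The plan is to unwrap the definition of weak Campana points for the specific orbifold $(\mathbb{P}^{d-1}_\mathbb{Q}, (1-1/m)Z(N_{\om}))$ with $K=\mathbb{Q}$, $S = S_\infty$, and an integral basis $\om$, and show that weak Campana $\mathbb{Z}$-points are precisely those rational points whose norm form values are $m$-full. With this translation, the corollary falls out of Theorem \ref{mainthm} almost immediately, the only extra ingredient being the $2$-to-$1$ correspondence between $\mathbb{Z}^d_{\textrm{prim}}$ and $\mathbb{P}^{d-1}(\mathbb{Q})$ given by $x \mapsto [x]$.

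First I would fix a point $P \in \mathbb{P}^{d-1}(\mathbb{Q})$ and choose primitive coordinates $x \in \mathbb{Z}^d_{\textrm{prim}}$, so $P = [x]$. Since $\om$ is an integral basis and $N_{\om}$ has $\mathbb{Z}$-coefficients, $N_{\om}(x) \in \mathbb{Z}$. Because $x$ is primitive, the standard model $\mathbb{P}^{d-1}_{\mathbb{Z}}$ shows that for each prime $p$ the local intersection multiplicity of $P$ with the component $Z(N_{\om})$ at $p$ equals $v_p(N_{\om}(x))$. The weak Campana condition for the weighted divisor $(1-1/m)Z(N_{\om})$, whose single component carries weight $1-1/m$ (i.e.\ multiplicity $m$ in the sense of \cite{CPBH}), then translates into the requirement that for every finite place $p$ either $v_p(N_{\om}(x)) = 0$ or $v_p(N_{\om}(x)) \geq m$, which is precisely the condition that $N_{\om}(x)$ is $m$-full (or zero; but the latter contributes a thin set that does not affect the asymptotic).

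Next I would invoke Theorem \ref{mainthm} with $S = S_\infty = S(\om)$ (using the remark that $S(\om)$ reduces to $S_\infty$ for a relative integral basis) to obtain the projective count with leading constant $c(\om,m,S_\infty)$. Lifting this count from $\mathbb{P}^{d-1}(\mathbb{Q})$ to $\mathbb{Z}^d_{\textrm{prim}}$ multiplies the number of points by exactly $2$, since each projective point has the two primitive representatives $\pm x$, and both $H$ and the property ``$N_{\om}$ is $m$-full'' are invariant under $x \mapsto -x$. This yields the stated constant $2c(\om,m,S_\infty)$.

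There is no serious obstacle here; the only point requiring minor care is to confirm that the weak Campana condition at finite places matches $m$-fullness of $N_{\om}(x)$ under the chosen integral model, and that the boundary contributions (points with $N_{\om}(x) = 0$) lie in a proper Zariski-closed subvariety and therefore contribute $O(B^{1/m})$ with a strictly smaller power of $\log B$, hence are absorbed in the error term of the asymptotic inherited from Theorem \ref{mainthm}.
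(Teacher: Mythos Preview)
Your approach is correct and matches the paper's own reasoning: the paper simply observes the two-to-one correspondence between $\mathbb{Z}^d_{\textrm{prim}}$ and $\mathbb{P}^{d-1}(\mathbb{Q})$ and states that the corollary follows immediately from Theorem~\ref{mainthm}, while you additionally spell out the identification of the weak Campana condition with $m$-fullness (which the paper establishes separately in Lemma~\ref{identlem}). Your remark about the boundary $N_{\om}(x)=0$ is harmless but unnecessary, since the paper defines $m$-full only for nonzero integers and the count in Theorem~\ref{mainthm} is already taken over $T(K)$, so both sides exclude $Z(N_{\om})$ by construction.
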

Arithmetically special (e.g.\ prime, square-free) values of norm forms are a topic of long-standing interest in number theory (see e.g.\ \cite{PF}, \cite{SFRF}).

Campana points are only defined and studied for smooth orbifolds (i.e.\ smooth varieties for which the orbifold divisor has strict normal crossings support) in \cite{CPBH}. In order to study the Campana points of $\left(\mathbb{P}^{d-1}_K,\Delta^{\om}_m\right)$, which is smooth only when $d = 2$, we must first generalise the definition of Campana points, which we do in Section \ref{subsectionCP}.
Using the same strategy employed in the proof of Theorem \ref{mainthm}, we then derive an asymptotic for the number of Campana points on $\left(\mathbb{P}^{d-1}_K,\Delta^{\om}_m\right)$.
\begin{theorem} \label{campthm}
With the notation and hypotheses of Theorem \ref{mainthm}, denote by $\widetilde{N}\left(\left(\mathbb{P}^{d-1}_K,\Delta^{\om}_m\right),H,B,S\right)$ the number of Campana $\mathcal{O}_{K,S}$-points on $\left(\mathbb{P}^{d-1}_K,\Delta^{\om}_m\right)$ of height at most $B \in \mathbb{R}_{\geq 1}$ with respect to $H$. Then there exists an explicit positive constant $\widetilde{c}\left(\om,m,S\right)$ such that, as $B \rightarrow \infty$, we have
$$
\widetilde{N}\left(\left(\mathbb{P}^{d-1}_K,\Delta^{\om}_m\right),H,B,S\right) \sim \widetilde{c}\left(\om,m,S\right) B^{\frac{1}{m}}.
$$
\end{theorem}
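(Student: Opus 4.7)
The plan is to carry out the same height zeta function argument used for Theorem \ref{mainthm}, with the weak Campana local conditions replaced by the strict Campana conditions and the resulting change in pole order tracked carefully. Specifically, I would study
\[
\widetilde{Z}\left(s\right) = \sum_{P} H\left(P\right)^{-s},
\]
where $P$ ranges over Campana $\mathcal{O}_{K,S}$-points on $\left(\mathbb{P}^{d-1}_K, \Delta^{\om}_m\right)$, realise it as an adelic integral for the norm torus $T = R_{E/K}\mathbb{G}_m/\mathbb{G}_m$ acting on $\mathbb{P}^{d-1}_K$ through $\om$, and apply Fourier inversion on $T\left(\mathbb{A}_K\right)/T\left(K\right)$ to decompose it into local factors $\widetilde{Z}_v\left(s,\chi\right)$ indexed by unitary characters $\chi$.

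The key local calculation is the following. At a non-archimedean $v \notin S \cup S\left(\om\right)$ with residue field of size $q$ and places $w_1,\ldots,w_g$ of $E$ above $v$, a representative $\alpha \in E^*$ of a Campana point must satisfy $v_{w_i}\left(\alpha\right) \in \{0\} \cup \mathbb{Z}_{\geq m}$ for each $i$ \emph{separately}, whereas the weak condition only restricts the weighted sum $\sum_i f_{w_i} v_{w_i}\left(\alpha\right)$. At the trivial character the local factor therefore factorises across the $w_i$ in the form
\[
\widetilde{Z}_v\left(s, \mathbf{1}\right) = \prod_{i=1}^g \left(1 + \frac{q^{-s m f_{w_i}}}{1 - q^{-s f_{w_i}}}\right) \cdot \left(\text{unit and archimedean corrections}\right),
\]
as opposed to the richer expression with cross-terms among the $w_i$ that drives the pole up to order $b\left(d,m\right)$ in the weak case. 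Comparing with the Dedekind zeta function $\zeta_E\left(ms\right)$ then shows that $\prod_v \widetilde{Z}_v\left(s, \mathbf{1}\right)$ extends meromorphically past $\Re\left(s\right) = 1/m$ with a single simple pole there.

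The contributions of the nontrivial characters are then to be shown holomorphic in a neighbourhood of $\Re\left(s\right) = 1/m$ using the same unitary estimates and Fourier-analytic regularisation developed for Theorem \ref{mainthm}. A standard Wiener--Ikehara Tauberian theorem then extracts the asymptotic $\widetilde{N}\left(\left(\mathbb{P}^{d-1}_K,\Delta^{\om}_m\right),H,B,S\right) \sim \widetilde{c}\left(\om,m,S\right) B^{1/m}$, with $\widetilde{c}\left(\om,m,S\right)$ the residue of $\widetilde{Z}\left(s\right)$ at $s = 1/m$, expressible as an explicit product of local Campana densities together with a global factor involving the class number and regulator of $E$.

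The main obstacle is verifying that, beyond the simple pole contributed by the trivial character, every nontrivial character genuinely gives a holomorphic piece near $\Re\left(s\right) = 1/m$, i.e.\ that the strict Campana condition eliminates precisely the character orbits that generated the $\left(\log B\right)^{b\left(d,m\right)-1}$ factor in Theorem \ref{mainthm}. This should follow from the same regularisation of Fourier transforms on the singular orbifold as in that theorem, and is in fact cleaner in the strict case.
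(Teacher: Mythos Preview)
Your overall architecture is right --- replace $\phi_m$ by a strict Campana indicator $\psi_m$, run Poisson summation on $T\left(\mathbb{A}_K\right)/T\left(K\right)$, regularise the local Fourier transforms, and apply a Tauberian theorem --- and your local calculation at the trivial character is essentially the paper's. But there is a genuine gap in your treatment of the nontrivial characters.

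You assert that every nontrivial $\chi$ contributes a holomorphic piece near $\Re s = \frac{1}{m}$. This is false. The correct regularisation of $\widehat{H}_v\left(\psi_{m,v},\chi_v;-s\right)$ is the single $L$-factor $L_v\left(\chi^m,ms\right)$ (Proposition~\ref{lftcampprop}), so globally $\widehat{H}\left(\psi_m,\chi;-s\right)$ is controlled by $L\left(\chi^m,ms\right)$, which has a simple pole at $s=\frac{1}{m}$ precisely when $\chi^m=1$. Thus every character in the finite group $\mathcal{U}[m]$ contributes a simple pole, not just the trivial one; the paper even exhibits (Proposition~\ref{campprop}) an explicit quadratic extension where a nontrivial $\chi\in\mathcal{U}[2]$ contributes positively to the leading constant. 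The reason the exponent of $\log B$ drops from $b\left(d,m\right)$ to $1$ is not that fewer characters contribute, but that the regularisation collapses from a product of $b\left(d,m\right)$ Hecke $L$-functions to a single one.

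This misidentification has two consequences you do not address. First, your leading constant, described as the residue coming from the trivial character alone, is wrong: the paper's constant is
\[
\widetilde{c}\left(\om,m,S\right)=\frac{m\Res_{s=1}\zeta_K\left(s\right)}{d\Res_{s=1}\zeta_E\left(s\right)}\lim_{s\to\frac{1}{m}}\left(s-\tfrac{1}{m}\right)\sum_{\chi\in\mathcal{U}[m]}\widehat{H}\left(\psi_m,\chi;-s\right).
\]
Second, and more seriously, once several characters contribute simple poles there is no a priori reason their residues cannot cancel, so you must prove $\widetilde{\Omega}_m\left(\frac{1}{m}\right)\neq 0$ separately. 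The paper does this (Proposition~\ref{omtildprop}) by rewriting the character sum as $\lvert\mathcal{U}[m]\rvert$ times an integral over the locus where $\psi_m=1$ and all $\chi\in\mathcal{U}[m]$ are trivial, then exhibiting an explicit nonempty sublocus (via an auxiliary indicator $\sigma_m$) on which the integral is positive. Your sketch contains no analogue of this step.
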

\begin{remark}
It is not clear if the exponent of the logarithm in Theorem \ref{mainthm} admits a geometric interpretation as it does in Theorem \ref{campthm} (cf.\ \cite[Conj.~1.1,~p.~3]{CPBH}).
\end{remark}
\subsection*{Acknowledgements} \label{subsectionA}
Parts of this work were completed during the program ``Reinventing Rational Points'' at the Institut Henri Poincar\'e and the conference ``Topics in Rational and Integral Points'' at the Universit\"at Basel; the author thanks the organisers of both events for their hospitality. Thanks go to the authors of \cite{CPBH} for their helpful feedback; to Daniel Loughran for his guidance as an expert and mentor; to Gregory Sankaran and David Bourqui for improvements arising from their examination of this work within the author's PhD thesis; to the anonymous referee for their thoughtful comments and suggestions, and to the Heilbronn Institute for Mathematical Research for their support.
\subsection*{Conventions} \label{subsectionC}
\subsubsection*{Algebra}
We take $\mathbb{N} = \mathbb{Z}_{\geq 1}$. We denote by $R^*$ the group of units of a ring $R$. Given a group $G$, we denote by $1_G$ the identity element of $G$, and for any $n \in \mathbb{N}$, we set $G[n] = \{g \in G: g^n = 1_G\}$. For any perfect field $F$, we fix an algebraic closure $\overline{F}$ and set $G_F = \Gal\left(\overline{F}/F\right)$. Given a topological group $G$, we denote by $G^{\wedge} = \Hom\left(G,S^1\right)$ its group of continuous characters, where $S^1 = \{z \in \mathbb{C}: z \overline{z} = 1\} \subset \mathbb{C}^*$ is the circle group. A monomial in the variables $x_1,\dots,x_n$ is a product $x_1^{a_1}\dotsm x_n^{a_n}$, $\left(a_1,\dots,a_n\right) \in \mathbb{Z}_{\geq 0}^n$. For any $n \in \mathbb{N}$, we denote by $\mu_n$ the group of $n$th roots of unity and by $S_n$ the symmetric group of order $n$.
\subsubsection*{Geometry}
We denote by $\mathbb{P}_R^n$ the projective $n$-space over the ring $R$. We omit the subscript if the ring $R$ is clear. Given a homogeneous polynomial $f \in R[x_0,\dots,x_n]$, we denote by $Z\left(f\right) =  \Proj R[x_0,\dots,x_n]/\left(f\right)$ the zero locus of $f$ viewed as a closed subscheme of $\mathbb{P}_R^n$.
A variety over a field $F$ is a geometrically integral separated scheme of finite type over $F$. Given a variety $X$ defined over $F$ and an extension $E/F$, we denote by $X_E= V \times_{\Spec F} \Spec E$ the base change of $X$ over $E$, and we write $\overline{X} = X \times_{\Spec F} \Spec \overline{F}$. When $F = K$ and $E = K_v$ for a number field $K$ and a place $v$ of $K$, we write $X_{v} = X_{K_v}$. Given a field $F$, we define $\mathbb{G}_{m,F} = \Spec F[x_0,x_1]/\left(x_0 x_1-1\right)$. We omit the subscript $F$ if the field is clear.
\subsubsection*{Number theory}
Given an extension of number fields $L/K$ with $K$-basis $\om = \{\omega_0,\dots,\omega_{d-1}\}$, we write $N_{\om}\left(x_0,\dots,x_{d-1}\right) = N_{L/K}\left(x_0 \omega_0 + \dots + x_{d-1}\omega_{d-1}\right)$ for the associated norm form. We denote by $\Val\left(K\right)$ the set of valuations of a number field $K$, and we denote by $S_{\infty}$ the set of archimedean valuations. For $v \in \Val\left(K\right)$, we denote by $\mathcal{O}_v$ the maximal compact subgroup of $K_v$. For a finite set of places $S$ containing $S_{\infty}$, we denote by $\mathcal{O}_{K,S} = \{\alpha \in K : \alpha \in \mathcal{O}_v \textrm{ for all $v \not\in S$}\}$ the ring of algebraic $S$-integers of $K$. We write $\mathcal{O}_K = \mathcal{O}_{K,S_{\infty}}$. For $v \in \Val \left(K\right)$ non-archimedean, we denote by $\pi_v$ and $q_v$ a uniformiser for the residue field of $K_v$ and the size of the residue field of $K_v$ respectively. If $v \mid \infty$, then we set $\log q_v = 1$. For each $v \in \Val \left(K\right)$, we choose the absolute value $|x|_v = |N_{K_v/\mathbb{Q}_p}\left(x\right)|_p$ for the unique $p \in \Val \left(\mathbb{Q}\right)$ with $v \mid p$ and the usual absolute value $|\cdot|_p$ on $\mathbb{Q}_p$. We denote by $\mathbb{A}_K =\widehat{\prod}_{v \in \Val \left(K\right)}^{\mathcal{O}_v}K_v$ the adele ring of $K$ with the restricted product topology.
\section{Background} \label{sectionB}
\subsection{Campana points} \label{subsectionCP}
In this section we define Campana orbifolds, Campana points and weak Campana points, generalising the definitions in \cite[\S3.2]{CPBH} in such a way that the exponents in Theorem \ref{campthm} match those in \cite[Conj.~1.1,~p.~3]{CPBH}.
\begin{mydef}
A \emph{Campana orbifold} over a field $F$ is a pair $\left(X,D_{\epsilon}\right)$ consisting of a proper, normal variety $X$ over $F$ and an effective Cartier $\mathbb{Q}$-divisor
$$
D_{\epsilon} = \sum_{\alpha \in \mathcal{A}} \epsilon_{\alpha} D_{\alpha}
$$
on $X$, where the $D_{\alpha}$ are prime divisors and $\epsilon_{\alpha} = 1 - \frac{1}{m_{\alpha}}$ for some $m_{\alpha} \in \mathbb{Z}_{\geq 2} \cup \{\infty\}$ (by convention, we take $\frac{1}{\infty} = 0$).
We define the \emph{support} of the $\mathbb{Q}$-divisor $D_{\epsilon}$ to be
$$
D_{\textrm{red}} = \sum_{\alpha \in \mathcal{A}} D_{\alpha}.
$$
We say that $\left(X,D_{\epsilon}\right)$ is \emph{smooth} if $X$ is smooth and $D_{\textrm{red}}$ has strict normal crossings (see \cite[\S41.21]{SP} for the definition of strict normal crossings divisors).
\end{mydef}
Let $\left(X,D_{\epsilon}\right)$ be a Campana orbifold over a number field $K$. Let $S \subset \Val\left(K\right)$ be a finite set containing $S_{\infty}$.
\begin{mydef}
A \emph{model} of $\left(X,D_{\epsilon}\right)$ over $\mathcal{O}_{K,S}$ is a pair $\left(\mathcal{X},\mathcal{D}_{\epsilon}\right)$, where $\mathcal{X}$ is a flat proper model of $X$ over $\mathcal{O}_{K,S}$ (i.e.\ a flat proper $\mathcal{O}_{K,S}$-scheme with $\mathcal{X}_{\left(0\right)} \cong X$) and $\mathcal{D}_{\epsilon} = \sum_{\alpha \in \mathcal{A}}\epsilon_{\alpha} \mathcal{D}_{\alpha}$ for $\mathcal{D}_{\alpha}$ the Zariski closure of $D_{\alpha}$ in $\mathcal{X}$.

Define $\mathcal{D}_{\textrm{red}} = \sum_{\alpha \in \mathcal{A}}\mathcal{D}_{\alpha}$. Denote by $\mathcal{D}_{\alpha_v}$, $\alpha_v \in \mathcal{A}_v$ the irreducible components of $\mathcal{D}_{\textrm{red}}$ over $\Spec \mathcal{O}_v$. We write $\alpha_v \mid \alpha$ if $\mathcal{D}_{\alpha_v} \subset \mathcal{D}_{\alpha}$.
\end{mydef}
Let $\left(\mathcal{X},\mathcal{D}_{\epsilon}\right)$ be a model for $\left(X,D_{\epsilon}\right)$ over $\mathcal{O}_{K,S}$. For $v \not\in S$, any $P \in X\left(K\right)$ induces some $\mathcal{P}_v \in \mathcal{X}\left(\mathcal{O}_v\right)$ by the valuative criterion of properness \cite[Thm.~II.4.7,~p.~101]{AG}.
\begin{mydef}
Let $P \in X\left(K\right)$ and take a place $v \not\in S$. For each $\alpha_v \in \mathcal{A}_v$, we define the \emph{local intersection multiplicity} $n_v\left(\mathcal{D}_{\alpha_v},P\right)$ of $\mathcal{D}_{\alpha_v}$ and $P$ at $v$ to be $\infty$ if $\mathcal{P}_v \subset \mathcal{D}_{\alpha_v}$, and the colength of the ideal $\mathcal{P}_v^*\mathcal{D}_{\alpha_v} \subset \mathcal{O}_v$ otherwise. We then define the quantities
$$
n_v\left(\mathcal{D}_{\alpha},P\right) = \sum_{\alpha_v \mid \alpha} n_v\left(\mathcal{D}_{\alpha_v},P\right), \quad n_v\left(\mathcal{D}_{\epsilon},P\right) = \sum_{\alpha \in \mathcal{A}} \epsilon_{\alpha} n_{v}\left(\mathcal{D}_{\alpha},P\right).
$$
\end{mydef}
We are now ready to define weak Campana points and Campana points. Both notions arise from \cite{BGNT}, with the former appearing in its current form in \cite[\S1]{CPVC}.
\begin{mydef}
We say that $P \in X\left(K\right)$ is a \emph{weak Campana $\mathcal{O}_{K,S}$-point} of $\left(\mathcal{X},\mathcal{D}_{\epsilon}\right)$ if the following implications hold for all places $v \not\in S$ of $K$ and for all $\alpha \in \mathcal{A}$:
\begin{enumerate}
\item If $\epsilon_\alpha = 1$ (meaning $m_{\alpha} = \infty$), then $n_v\left(\mathcal{D}_{\alpha},P\right) = 0$.
\item If $n_v\left(\mathcal{D}_{\epsilon},P\right) > 0$, then
$$
\sum_{\alpha \in \mathcal{A}} \frac{1}{m_{\alpha}}n_v\left(\mathcal{D}_{\alpha},P\right) \geq 1.
$$
\end{enumerate}
We denote the set of weak Campana $\mathcal{O}_{K,S}$-points of $\left(\mathcal{X},\mathcal{D}_{\epsilon}\right)$ by $\left(\mathcal{X},\mathcal{D}_{\epsilon}\right)_{\mathbf{w}}\left(\mathcal{O}_{K,S}\right)$.
\end{mydef}
\begin{mydef}
We say that $P \in X\left(K\right)$ is a \emph{Campana $\mathcal{O}_{K,S}$-point} of $\left(\mathcal{X},\mathcal{D}_{\epsilon}\right)$ if the following implications hold for all places $v \not\in S$ of $K$ and for all $\alpha \in \mathcal{A}$.
\begin{enumerate}[label=(\roman*)]
\item If $\epsilon_{\alpha} = 1$ (meaning $m_{\alpha} = \infty$), then $n_v\left(\mathcal{D}_{\alpha},P\right) = 0$. 
\item If $\epsilon_\alpha \neq 1$ and $n_v\left(\mathcal{D}_{\alpha_v},P\right) > 0$ for some $\alpha_v \mid \alpha$, then
$
n_v\left(\mathcal{D}_{\alpha_v},P\right) \geq m_{\alpha}.
$
\end{enumerate}
We denote the set of Campana $\mathcal{O}_{K,S}$-points of $\left(\mathcal{X},\mathcal{D}_{\epsilon}\right)$ by $\left(\mathcal{X},\mathcal{D}_{\epsilon}\right)\left(\mathcal{O}_{K,S}\right)$.
\end{mydef}
\begin{remark}
Informally, weak Campana points are rational points $P \in X\left(K\right)$ avoiding $\cup_{\epsilon_\alpha = 1}\mathcal{D}_{\alpha}$ which, upon reduction modulo any place $v \not\in S$, either do not lie on $D_{\textrm{red}}$ or lie on $D_\alpha$ with multiplicity at least $m_\alpha$ on average over $\alpha$. Similarly, Campana points are rational points $P \in X\left(K\right)$ avoiding $\cup_{\epsilon_\alpha = 1}\mathcal{D}_{\alpha}$ which, upon reduction modulo any place $v \not\in S$, either do not lie on $D_{\textrm{red}}$ or lie on each $v$-adic irreducible component of each $D_{\alpha}$ with multiplicity either $0$ or at least $m_{\alpha}$.
\end{remark}
\begin{remark}
Our definition of Campana points differs from the one in \cite[\S3.2]{CPBH}, in which one requires simply that $n_v\left(\mathcal{D}_\alpha,P\right) \geq m_{\alpha}$ instead of $n_v\left(\mathcal{D}_{\alpha_v},P\right) \geq m_{\alpha}$ in the second implication. If one were to apply this definition to the orbifold $\left(\mathbb{P}^{d-1}_K,\Delta^{\om}_m\right)$ of Theorem \ref{mainthm}, which is singular for all $d \geq 3$ as $Z\left(N_{\om}\right)$ is not a strict normal crossings divisor in this case, then the weak Campana points and the Campana points would be the same, but the asymptotic of Theorem \ref{mainthm} differs to \cite[Conj.~1.1,~p.~3]{CPBH} for $d \geq 3$ (at least if one takes the thin set there to be the empty set). Using the definitions above, we obtain the asymptotic for Campana points in Theorem \ref{campthm}, whose exponents match this conjecture.
\end{remark}
\begin{lemma} \label{genlem}
Let $\left(X,D_{\epsilon}\right)$ be a smooth Campana orbifold over a number field $K$ which is Kawamata log terminal (i.e.\ $\epsilon_{\alpha} < 1$ for all $\alpha \in \mathcal{A}$), and let $\left(\mathcal{X},\mathcal{D}_{\epsilon}\right)$ be a model of $\left(X,D_{\epsilon}\right)$ over $\mathcal{O}_{K,S}$ with $\mathcal{X}$ smooth over $\mathcal{O}_{K,S}$ and $\mathcal{D}_{\textrm{red}}$ a relative strict normal crossings divisor in $\mathcal{X}/\mathcal{O}_{K,S}$ as defined in \cite[\S2]{GMT}. Then the definition of Campana points on $\left(\mathcal{X},\mathcal{D}_{\epsilon}\right)$ above coincides with the one in \cite[\S3.2]{CPBH}.
\end{lemma}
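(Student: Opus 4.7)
The plan is to observe that under the Kawamata log terminal hypothesis condition (i) of both definitions is vacuous (since every $\epsilon_\alpha < 1$), so the only discrepancy to reconcile is condition (ii). The new definition requires $n_v(\mathcal{D}_{\alpha_v}, P) \geq m_\alpha$ for every $\alpha_v \mid \alpha$ with positive local intersection, whereas \cite[\S3.2]{CPBH} asks only for the summed inequality $n_v(\mathcal{D}_\alpha, P) = \sum_{\alpha_v \mid \alpha} n_v(\mathcal{D}_{\alpha_v}, P) \geq m_\alpha$. These two formulations coincide provided that, for each $\alpha \in \mathcal{A}$ and each $v \notin S$, at most one of the summands is nonzero, so the task reduces to establishing this geometric statement.

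The key input is that the relative strict normal crossings hypothesis from \cite[\S2]{GMT} forces each prime component $\mathcal{D}_\alpha$ of $\mathcal{D}_{\textrm{red}}$ to be smooth over $\mathcal{O}_{K,S}$. For any $v \notin S$, the base change $\mathcal{D}_\alpha \times_{\mathcal{O}_{K,S}} \Spec \mathcal{O}_v$ is therefore smooth, hence normal, over $\Spec \mathcal{O}_v$. A Noetherian normal scheme decomposes as the disjoint union of its irreducible components, so the $v$-adic pieces $\{\mathcal{D}_{\alpha_v} : \alpha_v \mid \alpha\}$ are pairwise disjoint closed subschemes of $\mathcal{X} \times_{\mathcal{O}_{K,S}} \Spec \mathcal{O}_v$.

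Given this disjointness, the closed point of the section $\mathcal{P}_v : \Spec \mathcal{O}_v \to \mathcal{X}$ can lie on at most one $\mathcal{D}_{\alpha_v}$ with $\alpha_v \mid \alpha$; for all other such indices, $\mathcal{P}_v^* \mathcal{D}_{\alpha_v}$ is the unit ideal and $n_v(\mathcal{D}_{\alpha_v}, P) = 0$. Thus at most one summand in the definition of $n_v(\mathcal{D}_\alpha, P)$ is nonzero, and the two versions of condition (ii) coincide, completing the proof. The only point requiring real attention is the smoothness of the individual components $\mathcal{D}_\alpha$ over $\mathcal{O}_{K,S}$, but this is built into the relative SNC definition in \cite[\S2]{GMT} and demands no further argument beyond invoking normality to separate the irreducible components.
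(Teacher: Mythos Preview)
Your proof is correct and follows essentially the same approach as the paper: both argue that the relative strict normal crossings hypothesis makes each $\mathcal{D}_\alpha$ smooth over $\mathcal{O}_{K,S}$, hence its $v$-adic irreducible components are disjoint, so the reduction of $P$ meets at most one of them and the two versions of condition (ii) agree. Your write-up is slightly more explicit in justifying disjointness via normality and in noting that condition (i) is vacuous under the klt assumption, but the argument is the same.
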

\begin{proof}
Since $\mathcal{D}_{\textrm{red}}$ is a relative strict normal crossings divisor, each irreducible component $\mathcal{D}_{\alpha}$ is smooth over $\mathcal{O}_{K,S}$. In particular, its base change over $\Spec \mathcal{O}_v$ is smooth for any $v \not\in S$, so the divisors $\mathcal{D}_{\alpha_v}$, $\alpha_v \mid \alpha$ are disjoint. Then, for any rational point $P \in X\left(K\right)$, the reduction of $P$ at the place $v$ can lie on at most one of the divisors $\mathcal{D}_{\alpha_v}$, $\alpha_v \mid \alpha$, so $n_v\left(\mathcal{D}_{\alpha},P\right) = \sum_{\alpha_v \mid \alpha} n_v\left(\mathcal{D}_{\alpha_v},P\right)$ is either $0$ or at least $m_{\alpha}$ if and only if each $n_v\left(\mathcal{D}_{\alpha_v},P\right)$, $\alpha_v \mid \alpha$, is either $0$ or at least $m_{\alpha}$.
\end{proof}
\subsection{Toric varieties}
\begin{mydef}
An \emph{(algebraic) torus} over a field $F$ is an algebraic group $T$ over $F$ such that $\overline{T} \cong \mathbb{G}_m^n$ for some $n \in \mathbb{N}$. The \emph{splitting field} of a torus $T$ over a field $F$ is defined to be the smallest Galois field extension $E$ of $F$ for which $T_E \cong \mathbb{G}_m^n$.
\end{mydef}
\begin{mydef}
A \emph{toric variety} is a smooth projective variety $X$ equipped with a faithful action of an algebraic torus $T$ such that there is an open dense orbit containing a rational point.  
\end{mydef}
\begin{mydef}
Let $T$ be a torus over a field $F$. The \emph{character group} of $T$ is $X^*\left(\overline{T}\right) = \Hom\left(\overline{T},\mathbb{G}_m\right)$, and we have $X^*\left(T\right) = X^*\left(\overline{T}\right)^{G_F}$. The \emph{cocharacter group} of $T$ is $X_*\left(\overline{T}\right) = \Hom\left(X^*\left(\overline{T}\right), \mathbb{Z}\right)$, and we have $X_*\left(T\right) = X_*\left(\overline{T}\right)^{G_F}$. We let $X^*\left(T\right)_{\mathbb{R}} = X^*\left(T\right) \otimes_{\mathbb{Z}} \mathbb{R}$ and $X_*\left(T\right)_{\mathbb{R}} = X_*\left(T\right) \otimes_{\mathbb{Z}} \mathbb{R}$.
\end{mydef}
\begin{mydef}
An algebraic torus $T$ over a field $F$ is \emph{anisotropic} if it has trivial character group over $F$, i.e.\ $X^*\left(T\right) = 0.$
\end{mydef}
Let $T$ be a torus over a number field $K$ with splitting field $E$. Set $T_\infty = \prod_{v \mid \infty}T_v$. For $v \in \Val\left(K\right)$, let $T\left(\mathcal{O}_v\right)$ denote the maximal compact subgroup of $T\left(K_v\right)$.
\begin{mydef}
Let $v \in \Val\left(K\right)$ and $w \in \Val\left(E\right)$ with $w \mid v$.

For $v \nmid \infty$ with ramification degree $e_v$ in $E/K$, define the maps
$$
\deg_{T,v}: T\left(K_v\right) \rightarrow X_*\left(T_v\right), \quad t_v \mapsto [\chi_v \mapsto v\left( \chi_v\left(t_v\right)\right)]
$$
and $\deg_{T,E,v} = e_v \deg_{T,v}$.

For $v \mid \infty$, define the maps
$$
\deg_{T,v}: T\left(K_v\right) \rightarrow X_*\left(T_v\right)_{\mathbb{R}}, \quad t_v \mapsto [\chi_v \mapsto \log\left|\chi_v\left(t_v\right)\right|_v] 
$$
and $\deg_{T,E,v} = [E_w:K_v] \deg_{T,v}$.

Finally, define the maps
$$
\deg_T = \sum_{v \in \Val\left(K\right)} \left(\log q_v\right) \deg_{T,v}, \quad \deg_{T,E} = \sum_{v \in \Val\left(K\right)} \left(\log q_w\right) \deg_{T,E,v}.
$$
\end{mydef}
\begin{lemma} \label{lplem} \cite[\S2.2]{FZH}, \cite[\S4.2]{NVF} Let $v \in \Val\left(K\right)$, and let $f$ be either $\deg_{T,v}$ or $\deg_{T,E,v}$.
\begin{enumerate}[label=(\roman*)]
\item \label{lplem1} If $v$ is non-archimedean, then we have the exact sequence
$$
0 \rightarrow T\left(\mathcal{O}_v\right) \rightarrow T\left(K_v\right) \xrightarrow{f} X_*\left(T_v\right).
$$
The image of $f$ is open and of finite index. Further, if $v$ is unramified in $E$, then $f$ is surjective.
 \item \label{lplem2} If $v$ is archimedean, then we have the short exact sequence
$$
0 \rightarrow T\left(\mathcal{O}_v\right) \rightarrow T\left(K_v\right) \xrightarrow{f} X_*\left(T_v\right)_{\mathbb{R}} \rightarrow 0.
$$
Further, $f$ admits a canonical section.
\item \label{lplem3} Letting $g$ be either $\deg_T$ or $\deg_{T,E}$ and denoting its kernel by $T\left(\mathbb{A}_K\right)^1$, we have the split short exact sequence
$$
0 \rightarrow T\left(\mathbb{A}_K\right)^1 \rightarrow T\left(\mathbb{A}_K\right) \xrightarrow{g} X_*\left(T\right)_{\mathbb{R}} \rightarrow 0,
$$
hence we have an isomorphism
\begin{equation} \label{spliteqn}
T\left(\mathbb{A}_K\right) \cong T\left(\mathbb{A}_K\right)^1 \times X_*\left(T\right)_{\mathbb{R}}.
\end{equation}
\end{enumerate}
\end{lemma}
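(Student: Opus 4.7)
The plan is to reduce each part to the split torus case via Galois descent over the splitting field $E$ of $T$, where the statements reduce to direct computations with valuations and logarithms.

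For part (i), in the split case $T = \mathbb{G}_{m,K_v}^n$ the map $\deg_{T,v}$ is just the componentwise valuation $(t_1,\ldots,t_n) \mapsto (v(t_1),\ldots,v(t_n))$, with kernel $(\mathcal{O}_v^*)^n = T(\mathcal{O}_v)$ and image all of $\mathbb{Z}^n = X_*(T_v)$. For general $T$ splitting over $E_w/K_v$, I would obtain the sequence by taking $\Gal(E_w/K_v)$-invariants of the split sequence for $T_{E_w}$, using Galois descent $T(E_w)^{\Gal(E_w/K_v)} = T(K_v)$ and $T(\mathcal{O}_{E_w})^{\Gal(E_w/K_v)} = T(\mathcal{O}_v)$. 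The kernel computation transfers directly, and the image lies in $X_*(T_v) = X_*(T_{E_w})^{\Gal(E_w/K_v)}$ with cokernel injecting (via the long exact sequence) into $H^1(\Gal(E_w/K_v), T(\mathcal{O}_{E_w}))$; since the Galois group is finite and $T(\mathcal{O}_{E_w})$ is a profinite group with a nice filtration by congruence subgroups, this cohomology is finite, giving the finite-index claim. Openness is automatic since $X_*(T_v)$ is discrete. For unramified $v$, $T$ extends to a smooth model over $\mathcal{O}_v$ and Lang's theorem for the reduction kills the relevant $H^1$, yielding surjectivity. Rescaling by $e_v$ to pass from $\deg_{T,v}$ to $\deg_{T,E,v}$ preserves all of these properties since $e_v \in \mathbb{N}$.

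For part (ii), the same descent strategy applies with $\log|\cdot|_v$ replacing $v(\cdot)$. In the split case the map $(K_v^*)^n \to \mathbb{R}^n$ is surjective with kernel the maximal compact subgroup, and the exponential $(x_1,\ldots,x_n) \mapsto (e^{x_1},\ldots,e^{x_n})$ provides a canonical section. Because $\Gal(E_w/K_v)$ is a finite group acting on $\mathbb{R}$-vector spaces, the averaging operator shows that taking invariants is exact, so descent preserves both surjectivity and the section. For part (iii), the adelic map $g$ is the direct sum of the local maps, so its kernel is $T(\mathbb{A}_K)^1$ by definition, and surjectivity onto $X_*(T)_{\mathbb{R}}$ comes from part (ii) at the archimedean places alone (the non-archimedean contributions land in a discrete subgroup, which vanishes after $\otimes_{\mathbb{Z}}\mathbb{R}$). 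The splitting (\ref{spliteqn}) is assembled from the canonical archimedean sections composed with $T_\infty \hookrightarrow T(\mathbb{A}_K)$.

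The main obstacle will be the finite-index claim in part (i) for ramified $v$, which hinges on the finiteness of $H^1(\Gal(E_w/K_v), T(\mathcal{O}_{E_w}))$; the rest of the argument is essentially mechanical once the split case is in hand.
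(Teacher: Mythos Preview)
The paper does not give its own proof of this lemma; it is stated with citations to \cite[\S2.2]{FZH} and \cite[\S4.2]{NVF} and used as a black box. Your sketch is essentially the standard argument behind those references --- reduce to the split torus over the local splitting field $E_w$, where the sequences are obvious, and then take $\Gal(E_w/K_v)$-invariants --- and it is correct in outline.

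A few points deserve tightening. In part (i), your justification for the finiteness of $H^1(G_v, T(\mathcal{O}_{E_w}))$ is vague; a cleaner route is to use the structure of local units $\mathcal{O}_{E_w}^* \cong \mu \times U^{(1)}$ with $\mu$ finite and $U^{(1)}$ a finitely generated $\mathbb{Z}_p$-module, so that cohomology of the finite group $G_v$ with these coefficients is visibly finite. For the unramified surjectivity, Lang's theorem handles only the special fibre $T(\kappa_w)$; you also need vanishing on the successive quotients of the congruence filtration, which are additive (so this is additive Hilbert 90), and then pass to the inverse limit. In part (iii), the phrase ``vanishes after $\otimes_{\mathbb{Z}}\mathbb{R}$'' is misleading --- the non-archimedean images do not vanish in $X_*(T)_\mathbb{R}$; the correct point is simply that the archimedean sections already surject onto $X_*(T)_\mathbb{R}$, so nothing further is needed for surjectivity. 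Finally, when assembling the global section from the local archimedean ones, mind the normalisation: embedding $x \in X_*(T)_\mathbb{R}$ diagonally into $\prod_{v\mid\infty} X_*(T_v)_\mathbb{R}$ and applying the $s_v$ yields $\deg_T \circ s = |S_\infty|\cdot\mathrm{id}$, not $\mathrm{id}$, so one must either use a single archimedean place or rescale.
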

\begin{mydef}
Let $\chi$ be a character of $T\left(\mathbb{A}_K\right)$. We say that $\chi$ is \emph{automorphic} if it is trivial on $T\left(K\right)$. We say that $\chi$ is \emph{unramified at $v \in \Val \left(K\right)$} if $\chi_v$ is trivial on $T\left(\mathcal{O}_v\right)$, and we say that it is \emph{unramified} if it is unramified at every $v \in \Val \left(K\right)$.
\end{mydef}
The canonical sections of the maps $T\left(K_v\right) \xrightarrow{\deg_{T,v}} X_*\left(T_v\right)_{\mathbb{R}}$ for each $v \mid \infty$ from Lemma \hyperref[lplem2]{\ref*{lplem}\ref*{lplem2}} induce a canonical section of the composition
$$
T\left(\mathbb{A}_K\right) \rightarrow \prod_{v \mid \infty} T\left(K_v\right) \rightarrow X_*\left(T_\infty\right)_\mathbb{R}, 
$$
which in turn induces a ``type at infinity map''
\begin{equation} \label{taieq}
T\left(\mathbb{A}_K\right)^\wedge \rightarrow X^*\left(T_\infty\right)_{\mathbb{R}}, \quad \chi \mapsto \chi_{\infty}.
\end{equation}
Defining $\mathsf{K}_T = \prod_{v}T\left(\mathcal{O}_v\right)$, the splitting \eqref{spliteqn} for $g = \deg_T$ induces a map
$$
\left(T\left(\mathbb{A}_K\right)^1/T\left(K\right)\mathsf{K}_T\right) \rightarrow X^* \left(T_{\infty}\right)_{\mathbb{R}}
$$
which has finite kernel and image a lattice of codimension $\rank X^*\left(T\right)$ (see \cite[Lem.~4.52,~p.~96]{FZH}).
\begin{note}
When $T$ is anisotropic, we have $T\left(\mathbb{A}_K\right)^1 = T\left(\mathbb{A}_K\right)$ by Lemma \hyperref[lplem3]{\ref*{lplem}\ref*{lplem3}}, and then we see from the above that there is a map
$$
\left(T\left(\mathbb{A}_K\right)/T\left(K\right)\mathsf{K}_T\right) \rightarrow X^* \left(T_{\infty}\right)_{\mathbb{R}}
$$
with finite kernel and image a lattice of full rank.
\end{note}
\subsection{Hecke characters}
\begin{mydef}
A \emph{Hecke character} for $K$ is an automorphic character of $\mathbb{G}_{m,K}$.
\end{mydef}
Each Hecke character $\chi$ has a \emph{conductor} $q\left(\chi\right) \in \mathbb{N}$ (see \cite[\S5.10]{ANT}), which measures the ramification of $\chi$ at the non-archimedean places of $K$.
\begin{mydef}
A Hecke character is \emph{principal} if it is trivial on $\mathbb{G}_{m,K}\left(\mathbb{A}_K\right)^1$.
\end{mydef}
By Lemma \hyperref[lplem3]{\ref*{lplem}\ref*{lplem3}}, $\chi$ is principal if and only if $ \chi = \|\cdot\|^{i\theta}$ for some $\theta \in \mathbb{R}$, where $\|\cdot\|$ denotes the adelic norm map, i.e.\
$$
\|\cdot\|: \mathbb{A}_K^* \rightarrow S^1, \quad \left(x_v\right)_v \mapsto \prod_{v \in \Val\left(K\right)}|x_v|_v.
$$
\begin{mydef}
The \emph{(Hecke) $L$-function} $L\left(\chi,s\right)$ of a Hecke character $\chi$ is
$$
L\left(\chi,s\right) = \prod_v \left(1-\frac{\chi_v\left(\pi_v\right)}{q_v^s}\right)^{-1},
$$
where the product is taken over all places $v \nmid \infty$ at which $\chi$ is unramified.

The \emph{Dedekind zeta function} of $K$ is
$$
\zeta_K\left(s\right) = L\left(1,s\right).
$$
\end{mydef}
Given a Hecke character $\chi$ for a number field $L$ and $w \in \Val \left(L\right)$, we denote by $L_w\left(\chi,s\right)$ the local factor at $w$ for the Euler product of $L\left(\chi,s\right)$, i.e.\
$$
L_w\left(\chi,s\right) =
\begin{cases}
\left(1 - \frac{\chi_w \left(\pi_w\right)}{q_w^s}\right)^{-1} \textrm{ if $w \nmid \infty$ and $\chi$ is unramified at $w$}, \\
1 \textrm{ otherwise}.
\end{cases}
$$
When working over the field $L \supset K$, we define $L_v\left(\chi,s\right)$ for each $v \in \Val\left(K\right)$ by 
$$
L_v\left(\chi,s\right) = \prod_{w \mid v} L_w\left(\chi,s\right).
$$
\begin{theorem} \label{hlfthm} \cite[\S6]{ENAZ} 
The $L$-function of a Hecke character $\chi$ admits a meromorphic continuation to $\mathbb{C}$. If $\chi = \|\cdot\|^{i \theta}$ for some $\theta \in \mathbb{R}$, then this continuation admits a single pole of order $1$ at $s = 1+ i\theta$. Otherwise, it is holomorphic.
\end{theorem}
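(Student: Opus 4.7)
The natural approach is Tate's adelic formulation. Choose a Schwartz--Bruhat function $f = \prod_v f_v$ on $\mathbb{A}_K$ whose local components $f_v$ are standard (e.g.\ Gaussian at archimedean places and the characteristic function of $\mathcal{O}_v$ at unramified non-archimedean places, appropriately modified at the finitely many ramified ones). Define the global zeta integral
$$
Z\left(f,\chi,s\right) = \int_{\mathbb{A}_K^*} f\left(x\right) \chi\left(x\right) \|x\|^s \, d^*x,
$$
where $d^*x$ is a suitably normalised Haar measure on $\mathbb{A}_K^* = \mathbb{G}_{m,K}\left(\mathbb{A}_K\right)$. One first verifies absolute convergence for $\operatorname{Re} s$ large and factorises the integral as an Euler product of local zeta integrals $Z_v\left(f_v,\chi_v,s\right)$, which at good $v$ recover $L_v\left(\chi,s\right)$ up to an elementary factor; this expresses $L\left(\chi,s\right)$ in terms of $Z\left(f,\chi,s\right)$ divided by finitely many explicit local contributions, so meromorphic continuation of $Z$ implies that of $L\left(\chi,s\right)$.

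Next I would decompose $Z\left(f,\chi,s\right) = Z_{\geq 1} + Z_{<1}$ according to whether $\|x\| \geq 1$ or $\|x\| < 1$. The first piece $Z_{\geq 1}$ is entire in $s$ thanks to the rapid decay of $f$ and the boundedness of $\chi$. For the second piece I would use the automorphy of $\chi$ together with adelic Poisson summation applied to $\sum_{\gamma \in K} f\left(\gamma x\right) = \|x\|^{-1}\sum_{\gamma \in K} \widehat{f}\left(\gamma/x\right)$, and folding the sum into the integral via the exact sequence $0 \to K^* \to \mathbb{A}_K^* \to \mathbb{A}_K^*/K^* \to 0$. After the substitution $x \mapsto x^{-1}$, the integral $Z_{<1}$ is converted into an integral over $\|x\| \geq 1$ of $\widehat{f}$ against $\chi^{-1}\|\cdot\|^{1-s}$, plus boundary terms from the trivial Fourier modes $\gamma = 0$ on both sides. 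These boundary terms are essentially
$$
\left(\widehat{f}\left(0\right)\int_{\|x\| \geq 1} \chi^{-1}\left(x\right)\|x\|^{1-s}\,d^*x - f\left(0\right)\int_{\|x\| \geq 1} \chi\left(x\right)\|x\|^s\,d^*x\right) \cdot \left(\text{compact quotient volume}\right).
$$

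Using the splitting $\mathbb{A}_K^* \cong \mathbb{A}_K^{*,1} \times \mathbb{R}_{>0}$ from Lemma~\ref{lplem}\ref*{lplem3}, each boundary integral factorises as $\left(\int_{\mathbb{A}_K^{*,1}/K^*} \chi \, d^*x\right)$ times an elementary one-dimensional Mellin integral. The compact quotient integral vanishes unless $\chi$ is trivial on $\mathbb{A}_K^{*,1}$, i.e.\ unless $\chi = \|\cdot\|^{i\theta}$, in which case it produces a simple pole of the one-dimensional Mellin transform at $s=1+i\theta$ (and a second, cancelled, contribution from the $\widehat{f}$-term giving the functional equation). This is precisely the dichotomy in the statement, and isolates the unique simple pole at $s = 1+i\theta$ in the principal case. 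The main obstacle is justifying the exchange of sum and integral needed for Poisson summation at the level of convergence, which is the heart of Tate's thesis and where the Schwartz--Bruhat hypothesis on $f$ is essential.
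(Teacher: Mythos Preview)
Your sketch via Tate's thesis is a correct and standard modern proof of this classical result. Note, however, that the paper does not supply its own proof of this theorem: it is stated with a citation to Hecke \cite[\S6]{ENAZ} and used as a black box, so there is no argument in the paper against which to compare your approach.
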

\begin{mydef} \label{vecnormdef}
Let $\psi$ be a character of $\prod_{v | \infty} K_v^*$. The restriction of $\psi$ to each $\mathbb{R}_{>0} \subset K_v^*$ is of the form $x \mapsto |x|^{i\kappa_v}$ for some $\kappa_v \in \mathbb{R}$. We define
$$
\|\psi\| = \max_{v \mid \infty} |\kappa_v|.
$$
\end{mydef}
\begin{lemma} \label{lflem} \cite[Exercise~3,~\S5.2,~p.~100]{ANT}
Let $\chi$ be a non-principal Hecke character of $K$, let $C$ be a compact subset of $\Re s \geq 1$ and let $\varepsilon > 0$. Then
$$
L\left(\chi,s\right) \ll_{\varepsilon, C} q\left(\chi\right)^{\varepsilon} \left(1+ \|\chi_{\infty}\|\right)^{\varepsilon}, \quad \left(s-1\right)\zeta_K\left(s\right) \ll_C 1, \quad s \in C.
$$
\end{lemma}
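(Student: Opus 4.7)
The plan is to handle the two estimates separately: the zeta bound is essentially immediate from Theorem \ref{hlfthm}, and the $L$-function bound is a standard application of the Phragm\'en--Lindel\"of convexity principle.

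For the second estimate, Theorem \ref{hlfthm} applied to the trivial character gives that $\zeta_K(s) = L(1,s)$ is meromorphic on $\mathbb{C}$ with a unique simple pole at $s=1$. Hence $(s-1)\zeta_K(s)$ is entire, and in particular continuous and therefore bounded on any compact set $C \subset \mathbb{C}$.

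For the bound on $L(\chi,s)$, I would first observe that on any half-plane $\Re s \geq 1 + \delta$, absolute convergence of the Euler product gives $|L(\chi,s)| \leq \zeta_K(1+\delta)$, a constant independent of $\chi$. Next, I would invoke the functional equation satisfied by $L(\chi,s)$, which expresses it in terms of $L(\overline{\chi}, 1-s)$, a factor of the shape $q(\chi)^{s-1/2}$ (times a constant involving the discriminant of $K$), and gamma factors depending on the archimedean component $\chi_\infty$. Applying Stirling's formula to the gamma factors together with the trivial bound on $L(\overline{\chi}, 1-s)$ for $\Re(1-s) = 1+\delta$, one obtains a polynomial-type bound on $\Re s = -\delta$ of the shape $q(\chi)^{1/2+\delta}(1+\|\chi_\infty\|)^{A}$ for some $A = A(K,\delta)$. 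Since $\chi$ is non-principal, Theorem \ref{hlfthm} guarantees that $L(\chi,s)$ is entire, so the Phragm\'en--Lindel\"of principle applied to the strip $-\delta \leq \Re s \leq 1+\delta$ interpolates these two bounds linearly in $\Re s$. On the line $\Re s = 1$ this yields an estimate of the form $q(\chi)^{\varepsilon(\delta)}(1+\|\chi_\infty\|)^{\varepsilon(\delta)}$ with $\varepsilon(\delta)\to 0$ as $\delta \to 0$; combined with the trivial bound on $\Re s \geq 1 + \delta$, this controls $L(\chi,s)$ uniformly on any compact $C \subset \{\Re s \geq 1\}$. Choosing $\delta$ small enough relative to the prescribed $\varepsilon$ concludes the argument.

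The principal technical obstacle is the careful bookkeeping of the archimedean factors in the functional equation: one must extract a clean uniform polynomial dependence on $\|\chi_\infty\|$ out of Stirling's asymptotic applied at each archimedean place, with the infinity type of $\chi$ entering the shifts in the gamma factors. As this is the classical convexity bound for Hecke $L$-functions, in practice I would ultimately cite the indicated exercise in Narkiewicz's book rather than reproduce the detailed calculation.
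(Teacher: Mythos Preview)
Your approach is correct and is the standard convexity argument for Hecke $L$-functions. Note, however, that the paper does not supply its own proof of this lemma: it simply cites it as \cite[Exercise~3,~\S5.2,~p.~100]{ANT}, so there is nothing to compare against beyond observing that your sketch is exactly the argument one is expected to produce for that exercise. One minor correction: the reference \cite{ANT} is Iwaniec--Kowalski, \emph{Analytic number theory}, not Narkiewicz.
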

\begin{mydef}
Let $E/K$ be Galois, let $\chi$ be a Hecke character for $E$ and let $g \in \Gal \left(E/K\right)$. We define the \emph{(Galois) twist of $\chi$ by $g$} to be the character
$$
\chi^g: \mathbb{A}_E^* \rightarrow S^1, \quad
\left(t_w\right)_w \mapsto \chi\left(\left(g_w\left(t_w\right)\right)_{gw}\right).
$$
Here, $gw$ denotes the place of $E$ obtained by the action of $g$ on $\Val\left(E\right)$, and $g_w: E_w \rightarrow E_{gw}$ is the induced map on completions. One may easily verify that $\chi^g$ is trivial on $E^*$, hence it is also a Hecke character for $E$.
\end{mydef}
\section{The norm torus} \label{sectionTNT}
In this section, we fix an extension of number fields $L/K$ of degree $d \geq 2$ with Galois closure $E$ and a $K$-basis $\om = \{\omega_0,\dots,\omega_{d-1}\}$. We write $N_{\om}\left(x_0,\dots,x_{d-1}\right)$ for the norm form corresponding to $\om$, and $G = \Gal\left(E/K\right)$. From the equality
\begin{equation} \label{normeq}
N_{\om}\left(x_0, \dots , x_{d-1}\right) = \prod_{g \in G/\Gal\left(E/L\right)}\left(x_0 g\left(\omega_0\right) + \dots + x_{d-1} g\left(\omega_{d-1}\right)\right),
\end{equation}
we see that $N_{\om}$ is irreducible over $K$ and has splitting field $E$.
We denote by $T$ the norm torus $T_{\om} = \mathbb{P}^{d-1}_K\setminus Z\left(N_{\om}\right)$. As noted in \cite[\S1.2]{NVF}, $\mathbb{P}^{d-1}_K$ is a toric variety with respect to $T$, and $T \cong R_{L/K}\mathbb{G}_m/\mathbb{G}_m$ is anisotropic. Since its boundary is $Z\left(N_{\om}\right)$, its splitting field is $E$. We have the short exact sequence
\begin{equation} \label{toruseq}
0 \rightarrow \mathbb{G}_m \rightarrow R_{L/K}\mathbb{G}_m \rightarrow T \rightarrow 0.
\end{equation}
\begin{note} \label{torhecnote}
The isomorphisms $T\left(\mathbb{A}_K\right) \cong \mathbb{A}_L^*/\mathbb{A}_K^*$ and $T\left(K\right) \cong L^*/K^*$ follow from Hilbert's Theorem 90 \cite[Prop.~IV.3.8,~p.~283]{ANTN} by applying Galois cohomology to \eqref{toruseq}. They allow us to interpret an automorphic character $\chi$ of $T$ as a Hecke character for $L$, and we will do so frequently. In fact, distinct automorphic characters of $T$ correspond to distinct Hecke characters of $L$ by \cite[Cor.~1.4.16,~p.~606]{RPBH} and \cite[Thm.~3.1.1,~p.~619]{RPBH}. Since $T\left(K_v\right) \cong \left(\prod_{w \mid v}L_w^*\right)/K_v^*$ for each $v \in \Val\left(K\right)$, we see that, if $\chi$ is unramified at $v$, then it is unramified as a Hecke character at all $w \mid v$. In particular, if $\chi$ is unramified at $v$ and $v$ is unramified in $L/K$, then $\prod_{w \mid v} \chi_w\left(\pi_w\right) = 1$, since $\pi_v$ is a uniformiser for $L_w$ for each $w \mid v$.
\end{note}
\subsection{Geometry} \label{subsectionG}
In this section we study fan-theoretic objects related to $T$. We begin by describing the fan $\Sigma \subset X_*\left(\overline{T}\right)_{\mathbb{R}}$ associated to the equivariant compactification $\mathbb{P}^{d-1}_K$ of $T$ and the associated piecewise-linear function $\varphi_{\Sigma}$ (see \cite[\S1.2]{RPBH}) used to define the Batyrev--Tschinkel height function. 

Denoting by $l_0\left(x\right),\dots,l_{d-1}\left(x\right) \in E[x]$ the $E$-linear factors of $N_{\om}\left(x\right)$, we have the $E$-isomorphism
$$
\begin{aligned}
\Phi: \overline{T} = \mathbb{P}^{d-1} \setminus \bigcup_{i=0}^{d-1}Z\left(l_i\right) \xrightarrow{\sim} \mathbb{G}_m^{d-1} = \mathbb{P}^{d-1} \setminus \bigcup_{j=0}^{d-1}Z\left(x_j\right), \\
[x_0,\dots,x_{d-1}] \mapsto [l_0\left(x\right),\dots,l_{d-1}\left(x\right)].
\end{aligned}
$$
By \cite[\S1.1]{TVSE}, the fan associated to $\mathbb{P}^{d-1}_E$ as a compactification of $\mathbb{G}_{m,E}^{d-1}$ is the fan whose $r$-dimensional cones are generated by the $r$-fold subsets of $\{e_0',\dots,e_{d-1}'\}$ for $0 \leq r \leq d-1$, where $e_i' \in X_*\left(\mathbb{G}_m^{d-1}\right) \cong \Hom\left(\mathbb{G}_m,\mathbb{G}_m^{d-1}\right)$ is defined by
$$
e'_i: \mathbb{G}_m \rightarrow \mathbb{G}_m^{d-1}, \quad t \mapsto [x_{0,i}\left(t\right),\dots,x_{d-1,i}\left(t\right)],
$$
where in turn
$$
x_{j,i}\left(t\right) = 
\begin{cases}
t \textrm{ if } i=j, \\
1 \textrm{ otherwise.}
\end{cases}
$$
\begin{mydef}
Set $e_i = \Phi^{-1} \circ e'_i$ for $i=0,\dots,d-1$, and define $\Sigma$ to be the fan whose $r$-dimensional cones are generated by the $r$-fold subsets of $\{e_0,\dots,e_{d-1}\}$ for $0 \leq r \leq d-1$.
\end{mydef}
It follows that $\Sigma$ is the fan associated to $\mathbb{P}^{d-1}_E$ as a compactification of $T_E$. Also, we see that $\sum_{i=0}^{d-1}e_i = 0$ and that $\{e_1,\dots,e_{d-1}\}$ is the dual of the basis $\{m_1,\dots,m_{d-1}\}$ of $X^*\left(\overline{T}\right)$, where $m_i\left(x\right) = \frac{l_i\left(x\right)}{l_0\left(x\right)}$ for $i=1,\dots,d-1$.

We now show that the action of $G$ on $\Sigma\left(1\right)$ is compatible with its action on the $E$-linear factors of $N_{\om}$. Denote by $*$ the action of $G$, and set $l_{g\left(i\right)} = g * l_i$.
\begin{lemma} \label{galactlem}
For all $g \in G$ and $i=0,\dots,d-1$, we have
$$
g * e_i = e_{g\left(i\right)}.
$$
\end{lemma}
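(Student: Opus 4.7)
The plan is to unwind the definition $e_i = \Phi^{-1} \circ e'_i$ and track what the Galois action does to each factor. The cocharacters $e'_i$ are defined over $K$ (indeed, over the prime field), since they only involve constants $0$ and $1$ in their coordinate description, so $g * e'_i = e'_i$. Hence the content lies in computing $g * \Phi^{-1}$, or equivalently $g * \Phi$.

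First I would observe that $g * \Phi = \tau_g \circ \Phi$, where $\tau_g$ is the coordinate permutation on $\mathbb{P}^{d-1}_E$ defined by $\tau_g(y)_i = y_{g(i)}$. This is because the coefficients of $\Phi$ are the coefficients of the linear forms $l_0,\dots,l_{d-1}$, and by the definition $l_{g(i)} = g * l_i$, applying $g$ to $\Phi$ replaces $l_i$ by $l_{g(i)}$ in the $i$-th projective coordinate, which is exactly $\tau_g \circ \Phi$. Taking inverses yields $g * \Phi^{-1} = \Phi^{-1} \circ \tau_g^{-1}$, and therefore
$$
g * e_i = (g * \Phi^{-1}) \circ (g * e'_i) = \Phi^{-1} \circ \tau_g^{-1} \circ e'_i.
$$

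Finally I would check the combinatorial identity $\tau_g^{-1} \circ e'_i = e'_{g(i)}$ on the split torus. This is a direct computation: the $j$-th coordinate of $(\tau_g^{-1} \circ e'_i)(t)$ is $e'_i(t)_{g^{-1}(j)}$, which equals $t$ precisely when $g^{-1}(j) = i$, i.e.\ when $j = g(i)$, and equals $1$ otherwise. This is the coordinate description of $e'_{g(i)}$. Composing with $\Phi^{-1}$ then gives $g * e_i = \Phi^{-1} \circ e'_{g(i)} = e_{g(i)}$, as required.

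Conceptually, the identity just says that the bijection between the rays of $\Sigma$ and the irreducible components $Z(l_0),\dots,Z(l_{d-1})$ of the boundary $Z(N_{\om})_E$ is $G$-equivariant for the natural actions on both sides; I view the explicit computation above as the verification of this compatibility in the chosen coordinates. The only step requiring care is the bookkeeping for how the Galois action on $\Phi$ translates into a coordinate permutation on the split-torus side — getting $g$ versus $g^{-1}$ right is the main potential pitfall, but it is forced by the duality between composing on the left (permuting target coordinates by $g$) and pulling back cocharacters (permuting source labels by $g^{-1}$).
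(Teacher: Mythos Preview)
Your proof is correct. It differs from the paper's in that the paper verifies the identity by testing against the dual basis of characters: it checks $(g * e_i)(m_j) = e_{g(i)}(m_j)$ for $m_j = l_j/l_0$, reducing via the formula $e_i(l_j/l_k) = \delta_{ij} - \delta_{ik}$ to the tautology $\delta_{i,g^{-1}(j)} - \delta_{i,g^{-1}(0)} = \delta_{g(i),j} - \delta_{g(i),0}$. You instead work directly with the defining factorisation $e_i = \Phi^{-1} \circ e'_i$ and track the Galois action on $\Phi$, which has the advantage of making the equivariance statement transparent (the $l_i$ are literally the coordinates of $\Phi$, so permuting them by $g$ is visibly the permutation $\tau_g$ on the target). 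The paper's route sidesteps any need to reason about $\Phi^{-1}$ or about the functoriality of the Galois action under composition, at the cost of a slightly more opaque Kronecker-delta bookkeeping; your route is more geometric but requires the care you took with $g$ versus $g^{-1}$ in the permutation conventions. Both arguments are short and neither is materially simpler than the other.
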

\begin{proof}
Let $g \in G$. It suffices to show that
\begin{equation} \label{galacteqn}
\left(g * e_i\right)\left(m_j\right) = e_{g\left(i\right)}\left(m_j\right)
\end{equation}
for all $i \in \{0,\dots,d-1\}$ and $j \in \{1,\dots,d-1\}$. Note that, for any $i,j,k \in \{0,\dots,d-1\}$, we have 
\begin{equation} \label{krondelteqn}
e_i\left(\frac{l_j}{l_k}\right) = \delta_{ij} - \delta_{ik},
\end{equation}
where $\delta_{ij}$ is the Kronecker delta symbol, defined by
$$
\delta_{ij} =
\begin{cases}
1 \textrm{ if } i=j, \\
0 \textrm{ otherwise.}
\end{cases}
$$
Then \eqref{galacteqn} becomes
$$
\delta_{ig^{-1}\left(j\right)} - \delta_{ig^{-1}\left(0\right)} = \delta_{g\left(i\right)j} - \delta_{g\left(i\right)0},
$$
which clearly holds.
\end{proof}

By \cite[Thm.~1.22,~p.~217]{TVSE}, $\Sigma$ is the fan associated to the compactification $\mathbb{P}^{d-1}_K = \mathbb{P}^{d-1}_E / G$ of $T$ over $K$.

By \cite[Prop.~1.2.12,~p.~597]{RPBH}, the line bundle $L\left(\varphi_{\Sigma}\right)$ associated to the piecewise-linear function $\varphi_{\Sigma}: X_*\left(\overline{T
}\right)_{\mathbb{R}} \rightarrow \mathbb{R}$ (see \cite[Prop.~1.2.9,~p.~597]{RPBH}) defined by $\varphi_{\Sigma}\left(e_i\right) = 1$ for all $i=0,\dots,d-1$ is the anticanonical bundle $-K_{\mathbb{P}^{d-1}}$.

It follows from the above that $G$ acts transitively on $\Sigma\left(1\right) = \{\langle e_0 \rangle, \dots, \langle e_{d-1} \rangle\}$. For $v \in \Val \left(K\right)$ non-archimedean, let $G_v$ denote the associated decomposition subgroup of $G$. By the proof of \cite[Thm.~3.1.3,~p.~619]{RPBH}, the $G_v$-orbits of $\Sigma\left(1\right)$ are in bijection with the places of $L$ over $v$, and the length of the $G_v$-orbit corresponding to a place $w \mid v$ is its inertia degree.
\begin{proposition} \label{cocharprop}
Let $v \in \Val\left(K\right)$ be non-archimedean with ramification degree $e_v$ in $E/K$, and let
$$
\Sigma\left(1\right) = \bigcup_{w \mid v}\Sigma_w\left(1\right)
$$
denote the decomposition of $\Sigma\left(1\right) = \{\langle e_0 \rangle, \dots, \langle e_{d-1} \rangle\}$ into $G_v$-orbits. For each $w \mid v$, let $n_w$ be the sum of the elements of $\Sigma_w\left(1\right)$ and let $f_w\left(x\right)$ be the product of the linear factors in the $G_v$-orbit of $\{l_0,\dots,l_{d-1}\}$ corresponding to $\Sigma_w\left(1\right)$ by Lemma \ref{galactlem}.
Then the map $\deg_{T,E,v}: T\left(K_v\right) \rightarrow X_*\left(T_v\right)$ is given by
$$
t_v \mapsto e_v \sum_{w \mid v} \frac{v\left(f_w\left(t_v\right)\right)}{\deg f_w}n_w.
$$
\end{proposition}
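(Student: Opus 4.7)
The plan is to verify the identity in $X_*(T_v)=\Hom(X^*(T_v),\mathbb{Z})$ by showing that both sides agree when paired against an arbitrary $G_v$-invariant character $\chi_v\in X^*(T_v)$. To make the pairing concrete, I work inside $X^*(R_{L/K}\mathbb{G}_m)$, which has a basis $\{\tilde l_0,\dots,\tilde l_{d-1}\}$ corresponding multiplicatively to the $E$-linear factors $l_0,\dots,l_{d-1}$ of $N_{\om}$; the sequence \eqref{toruseq} identifies $X^*(\overline T)$ with the sublattice $\{\sum_i a_i\tilde l_i:\sum_i a_i=0\}$, and the relation $e_i(l_j/l_k)=\delta_{ij}-\delta_{ik}$ appearing in the proof of Lemma \ref{galactlem} extends to $\langle\tilde l_j,e_i\rangle=\delta_{ij}$.

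Next I describe $X^*(T_v)=X^*(\overline T)^{G_v}$. By Lemma \ref{galactlem} the $G_v$-action on the indices $\{0,\dots,d-1\}$ agrees with its action on $\Sigma(1)$, whose orbits biject with places $w\mid v$ of $L$. Writing $O_w$ for the orbit corresponding to $w$ and setting $d_w=|O_w|=\deg f_w$, any $\chi_v\in X^*(T_v)$ takes the form
\[
\chi_v=\sum_{w\mid v}c_w\sum_{i\in O_w}\tilde l_i,\qquad c_w\in\mathbb{Z},\ \sum_{w\mid v}c_w d_w=0.
\]
Evaluating multiplicatively, together with the observation that $f_w=\prod_{i\in O_w}l_i\in K_v[x]$ by $G_v$-invariance of the orbit product, gives $\chi_v(t_v)=\prod_{w\mid v}f_w(t_v)^{c_w}$ and hence $v(\chi_v(t_v))=\sum_{w\mid v}c_w\,v(f_w(t_v))$.

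For the right-hand side of the proposition, the identity $\langle\tilde l_j,n_w\rangle=\mathbf{1}_{j\in O_w}$ forces $\langle\chi_v,n_w\rangle=c_w d_w$, so
\[
\left\langle\chi_v,\ e_v\sum_{w\mid v}\frac{v(f_w(t_v))}{\deg f_w}n_w\right\rangle=e_v\sum_{w\mid v}c_w\,v(f_w(t_v))=e_v\,v(\chi_v(t_v))=\deg_{T,E,v}(t_v)(\chi_v),
\]
as required. Well-definedness on projective coordinates follows because $\sum_{w\mid v}n_w=\sum_{i=0}^{d-1}e_i=0$ absorbs any rescaling of the $x_i$; the integrality constraint $\sum_w c_w d_w=0$ is precisely what makes $v(\chi_v(t_v))$ independent of the chosen representative.

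The argument is essentially combinatorial and I anticipate no serious obstacle beyond a careful setup of the pairing. The two small points needing verification are the $K_v$-rationality of each $f_w$, which is immediate from the $G_v$-stability of $O_w$, and the parametrisation of $G_v$-invariant characters by orbit sums subject to $\sum_{w\mid v}c_w d_w=0$, which follows directly from the description of $X^*(\overline T)$ as the kernel of the degree map on $X^*(R_{L/K}\mathbb{G}_m)$.
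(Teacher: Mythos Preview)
Your proof is correct and follows the same underlying idea as the paper's --- verify the identity in $X_*(T_v)$ by pairing against characters --- but your execution is more direct. The paper first shows that $\{n_w\}$ spans $X_*(\overline T)^{G_v}$, writes $\deg_{T,E,v}(t_v)=\sum_w\alpha_w n_w$ with unknown $\alpha_w$, and then tests against the specific characters $\lambda_w(x)=f_w(x)^d/N_{\om}(x)^{\deg f_w}$; this produces a linear system whose solution determines $\alpha_w$ only up to a common additive constant, which is then absorbed using $\sum_w n_w=0$. You instead lift to the ambient lattice $X^*(R_{L/K}\mathbb{G}_m)$ with its transparent dual basis, parametrise an arbitrary $G_v$-invariant character, and verify the pairing in one line. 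This buys you a cleaner argument that avoids the linear-algebra detour and handles the projective ambiguity upfront; the paper's route, on the other hand, stays entirely within $X^*(\overline T)$ and $X_*(\overline T)$ without invoking the Weil restriction, which some readers may find more self-contained. One minor point worth making explicit in your write-up: the restriction map $X_*(\overline T)^{G_v}\to\Hom(X^*(\overline T)^{G_v},\mathbb{Z})$ is injective here (both sides have rank equal to the number of orbits minus one), so equality after pairing with all $\chi_v$ really does force equality in $X_*(T_v)$.
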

\begin{proof}
The image of $t_v$ in $X_*\left(T_v\right) \cong X_*\left(\overline{T}\right)^{G_v}$ under $\deg_{T,E,v}$ is the cocharacter
$$
\varphi_{t_v}: X^*\left(T_v\right) \rightarrow \mathbb{Z}, \quad
\lambda \mapsto e_v v\left(\lambda\left(t_v\right)\right).
$$
We first show that $\{n_w : w \mid v\}$ spans $X_*\left(\overline{T}\right)^{G_v}$. Given $g \in G$ and $\sigma = \sum_{i=0}^{d-1} a_i e_i$, we have $g * \sigma = \sum_{i=0}^{d-1} a_{g^{-1}\left(i\right)}e_i$, so $g * \sigma = \sigma$ if and only if there exists $r_g \in \mathbb{Z}$ such that $a_i = a_{g^{-1}\left(i\right)} + r_g$ for all $i \in \{0,\dots,d-1\}$. Setting $s = \#G$, we have
$$
a_i = a_{g^s\left(i\right)} = a_{g^{s-1}\left(i\right)} + r_g = \dots = a_i + s r_g,
$$
hence $r_g = 0$. We deduce that $\sigma \in \Sigma^{G_v}$ if and only if $a_i = a_j$ for all $e_i, e_j$ in the same $G_v$-orbit, so the result follows. Moreover, we observe that $\sum_{w \mid v}a_w n_w = \sum_{w \mid v}b_w n_w$ if and only if there exists $r \in \mathbb{Z}$ such that $b_w = a_w + r$ for all $w \mid v$, since there is a unique expression for $\sigma \in X_*\left(\overline{T}\right)$ in the form $\sigma = \sum_{i=0}^{d-1} c_i e_i$ where $c_d = 0$.

Now, write
$$
\varphi_{t_v} = \sum_{w \mid v}{\alpha_w n_w}.
$$
Define $\mu_i \in X^*\left(\overline{T}\right)$ and $\lambda_w \in X^*\left(\overline{T}\right)^{G_v}$ for all $i \in \{0,\dots,{d-1}\}$ and all $w \mid v$ by
$$
\mu_i\left(x\right) = \frac{l_i\left(x\right)^d}{N_{\om}\left(x\right)}, \quad \lambda_w\left(x\right)  = \prod_{e_i \in \Sigma_w} \mu_i\left(x\right) = \frac{f_w\left(x\right)^d}{N_{\om}\left(x\right)^{\deg f_w}}.
$$
By \eqref{krondelteqn}, we have
$$
e_i\left(\mu_j\right) = 
\begin{cases}
d -1 \textrm { if } i = j, \\
-1 \textrm{ otherwise}.
\end{cases}
$$
Then, setting $d_w = \deg f_w$, we see that
$$
n_w\left(\lambda_{w'}\right) = 
\begin{cases}
d d_w - d_w^2  \textrm{ if } w = w', \\
-d_w d_{w'} \textrm{ otherwise},
\end{cases}
$$
so we deduce that
\begin{equation} \label{chareq1}
e_v v\left(\lambda_w\left(t_v\right)\right) = dd_w \alpha_w - d_w\sum_{w' \mid v} d_{w'} \alpha_{w'}
\end{equation}
for all $w \mid v$.
On the other hand, we have
\begin{equation} \label{chareq2}
e_v v\left(\lambda_w\left(t_v\right)\right) = e_v d v\left(f_w\left(t_v\right)\right) - e_v d_w\sum_{w' \mid v} v\left(f_{w'}\left(t_v\right)\right).
\end{equation}
Set $\beta_w = d_w \alpha_w - e_v v\left(f_w\left(t_v\right)\right)$. 
Combining \eqref{chareq1} and \eqref{chareq2}, we obtain
$$
d\beta_w = d_w \sum_{w' \mid v}\beta_{w'},
$$
hence $\beta_{w'} = \frac{d_{w'}}{d_w}\beta_w$ for all $w \mid v$, $w' \mid v$.
Since $K_v \cong E_w^{G_v}$ for any $w \mid v$, it follows that $d_w \mid v\left(f_w\left(t_v\right)\right)$, so $\beta_w \in d_w\mathbb{Z}$ for all $w \mid v$.
We deduce that there exists an integer $n \in \mathbb{Z}$ such that, for all $w \mid v$, we have $\beta_w = d_w n$, hence
$$
\alpha_w = e_v \frac{v \left(f_w\left(t_v\right)\right)}{\deg f_w} + n.
$$
Since $\sum_{w \mid v} n_w = \sum_{i}e_i = 0$, we conclude that
\[
\varphi_{t_v} = e_v \sum_{w \mid v} \frac{v\left(f_w\left(t_v\right)\right)}{\deg f_w}n_w. \qedhere
\]
\end{proof}
We now study polynomials introduced by Batyrev and Tschinkel in \cite[\S2.2]{RPBH}, which play a key role in the analysis of local Fourier transforms in Section \ref{sectionWCP}.
\begin{mydef}
Let $v \in \Val \left(K\right)$ be non-archimedean, and let $\Sigma\left(1\right) = \bigcup_{i=1}^l\Sigma_i\left(1\right)$ be the decomposition of $\Sigma\left(1\right)$ into $G_v$-orbits. Let $d_i$ be the cardinality of $\Sigma_i\left(1\right)$. For each $\Sigma_i\left(1\right)$, define an independent variable $u_i$. Let $\sigma \in \Sigma^{G_v}$, and let $\Sigma_{i_1}\left(1\right) \cup \dots \cup \Sigma_{i_k}\left(1\right)$ be the set of $1$-dimensional faces of $\sigma$. We define the rational function
$$
R_{\sigma,v}\left(u_1,\dots,u_l\right) = \frac{u_{i_1}^{d_{i_1}}\dotsm u_{i_k}^{d_{i_k}}}{\left(1-u_{i_1}^{d_{i_1}}\right)\dotsm\left(1-u_{i_k}^{d_{i_k}}\right)},
$$
and we define the polynomial $Q_{\Sigma,v}\left(u_1,\dots,u_l\right)$ by
$$
\frac{Q_{\Sigma,v}\left(u_1,\dots,u_l\right)}{\left(1-u_1^{d_1}\right)\dotsm\left(1-u_l^{d_l}\right)} = \sum_{\sigma \in \Sigma^{G_v}}R_{\sigma,v}\left(u_1,\dots,u_l\right).
$$
\end{mydef}
\begin{proposition} \label{fanprop}
For all non-archimedean valuations $v \in \Val \left(K\right)$, we have
$$
Q_{\Sigma,v}\left(u_1,\dots,u_l\right) = 1 - u_1^{d_1}\dotsm u_l^{d_l}.
$$
\end{proposition}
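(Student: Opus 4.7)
The plan is to explicitly enumerate $\Sigma^{G_v}$, then recognise the sum $\sum_{\sigma \in \Sigma^{G_v}} R_{\sigma,v}$ as coming from a product expansion via the standard identity $\prod_{j=1}^l (1+x_j) = \sum_{J \subseteq \{1,\dots,l\}} \prod_{j \in J} x_j$, and finally clear denominators.

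First I would classify the $G_v$-invariant cones of $\Sigma$. The cones of $\Sigma$ are in bijection with the proper subsets $I \subsetneq \{0,\dots,d-1\}$, via $I \mapsto \sigma_I = \langle e_i : i \in I\rangle$; the properness comes from the relation $\sum_{i=0}^{d-1} e_i = 0$, which forbids the would-be top-dimensional cone. A cone $\sigma_I$ is $G_v$-stable precisely when $I$ is $G_v$-stable, and since $G_v$ acts on $\Sigma(1)$ by permutation (Lemma \ref{galactlem}) with orbits $\Sigma_1(1),\dots,\Sigma_l(1)$, this happens if and only if $I = \bigcup_{j \in J}\Sigma_j(1)$ for some $J \subseteq \{1,\dots,l\}$. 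Properness of $I$ in $\{0,\dots,d-1\}$ translates to properness of $J$ in $\{1,\dots,l\}$. Thus the map $J \mapsto \sigma_J := \langle \Sigma_j(1) : j \in J\rangle$ is a bijection between $\{J : J \subsetneq \{1,\dots,l\}\}$ and $\Sigma^{G_v}$.

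Next, under this bijection the definition of $R_{\sigma,v}$ gives
\[
R_{\sigma_J,v}(u_1,\dots,u_l) = \prod_{j \in J} \frac{u_j^{d_j}}{1 - u_j^{d_j}},
\]
so summing over proper subsets $J \subsetneq \{1,\dots,l\}$ and then adding and subtracting the $J = \{1,\dots,l\}$ term yields
\[
\sum_{\sigma \in \Sigma^{G_v}} R_{\sigma,v} = \prod_{j=1}^l \left(1 + \frac{u_j^{d_j}}{1-u_j^{d_j}}\right) - \prod_{j=1}^l \frac{u_j^{d_j}}{1-u_j^{d_j}} = \frac{1 - u_1^{d_1}\dotsm u_l^{d_l}}{\prod_{j=1}^l (1 - u_j^{d_j})}.
\]
Multiplying through by $\prod_j(1 - u_j^{d_j})$ gives the claimed formula for $Q_{\Sigma,v}$.

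The only step requiring real care is the classification of $\Sigma^{G_v}$, and specifically the observation that $\Sigma$ has no top-dimensional $d$-cone (which forces $J \neq \{1,\dots,l\}$ and is what produces the $-u_1^{d_1}\dotsm u_l^{d_l}$ term); everything else is a bookkeeping application of the distributive law. I do not anticipate serious obstacles, since the combinatorics of the fan of $\mathbb{P}^{d-1}$ is very transparent and the Galois action on $\Sigma(1)$ has already been pinned down in Lemma \ref{galactlem}.
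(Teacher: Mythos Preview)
Your proof is correct and follows essentially the same approach as the paper: both classify $\Sigma^{G_v}$ as the cones indexed by proper subsets $J \subsetneq \{1,\dots,l\}$ of the set of $G_v$-orbits, and then simplify the resulting sum of the $R_{\sigma,v}$.

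The only difference is in the algebra at the end. The paper clears denominators first and then shows, by a telescoping induction on the variables, that
\[
\sum_{(t_1,\dots,t_l)\in\{0,1\}^l}\prod_{i=1}^l\bigl(t_i + (1-2t_i)u_i^{d_i}\bigr)=1.
\]
You instead leave the denominators in place and apply the subset-sum identity $\prod_j(1+x_j)=\sum_{J}\prod_{j\in J}x_j$ with $x_j=\dfrac{u_j^{d_j}}{1-u_j^{d_j}}$, then subtract the single forbidden term $J=\{1,\dots,l\}$. Your route is a little cleaner because $1+x_j=\dfrac{1}{1-u_j^{d_j}}$ collapses the product immediately, avoiding the paper's inductive step; but the two arguments are really the same computation organised differently.
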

\begin{proof}
Observe that the $G_v$-invariant cones in $\Sigma$ are precisely those cones generated by a set of $1$-dimensional cones of the form $\Sigma_{i_1}\left(1\right) \cup \dots \cup \Sigma_{i_k}\left(1\right)$ for some $i_1,\dots,i_k \in \{1,\dots,l\}$ pairwise distinct with $k < l$. From this observation, we deduce that
$$
\frac{Q_{\Sigma,v}\left(u_1,\dots,u_l\right)}{\left(1-u_1^{d_1}\right)\dotsm\left(1-u_l^{d_l}\right)} = \sum_{k=1}^{l-1} \sum_{\substack{i_1,\dots,i_k \in \{1,\dots,l\} \\ \textrm{ pairwise distinct}}}\frac{u_{i_1}^{d_{i_1}}\dotsm u_{i_k}^{d_{i_k}}}{\left(1-u_{i_1}^{d_{i_1}}\right)\dotsm\left(1-u_{i_k}^{d_{i_k}}\right)}.
$$
In particular, we see that
$$
Q_{\Sigma,v}\left(u_1,\dots,u_l\right) = \sum_{\left(t_1,\dots,t_l\right) \in \{0,1\}^l} \prod_{i=1}^l\left(t_i + \left(1-2t_i\right)u_i^{d_i}\right) - u_1^{d_1} \dotsm u_l^{d_l},
$$
so it suffices to prove that
\begin{equation} \label{bineq}
\sum_{\left(t_1,\dots,t_l\right) \in \{0,1\}^l} \prod_{i=1}^l\left(t_i + \left(1-2t_i\right)u_i^{d_i}\right) = 1.
\end{equation}
Splitting the sum into two smaller sums for $t_1 = 0$ and $t_1 = 1$, we obtain
$$
\begin{aligned}
& \sum_{\left(t_1,\dots,t_l\right) \in \{0,1\}^l} \prod_{i=1}^l\left(t_i + \left(1-2t_i\right)u_i^{d_i}\right) \\
= & \left(u_1^{d_1} + \left(1-u_1^{d_1}\right)\right)\sum_{\left(t_2,\dots,t_n\right) \in \{0,1\}^{l-1}} \prod_{i=2}^l \left(t_i + \left(1-2t_i\right)u_i^{d_i}\right) \\
= & \sum_{\left(t_2,\dots,t_n\right) \in \{0,1\}^{l-1}} \prod_{i=2}^l \left(t_i + \left(1-2t_i\right)u_i^{d_i}\right).
\end{aligned}
$$
Repeating this process for each variable $t_2,\dots,t_l$, we deduce \eqref{bineq}.
\end{proof}
\subsection{Haar measures and volume} \label{section:HMV}
Let $\omega$ be an invariant $d$-form on $T$. By a classical construction (see \cite[\S2.1.7]{IIVA}), $\omega$ gives rise to a Haar measure $|\omega|_v$ on $T\left(K_v\right)$ for each $v \in \Val\left(K\right)$. In \cite[\S3.3]{AAT}, Ono constructs the convergence factors
$$
c_v = 
\begin{cases}
L_v\left(X^*\left(\overline{T}\right),1\right)^{-1} \textrm{ if } v \nmid \infty, \\
1 \textrm{ if } v \mid \infty.
\end{cases}
$$
Here, $L_v\left(X^*\left(\overline{T}\right),s\right)$ is the local factor at $v$ of the Artin $L$-function $L\left(X^*\left(\overline{T}\right),s\right)$.
Defining $\mu_v = c_v^{-1}|\omega|_v$, the product of the $\mu_v$ converges to give a Haar measure $\mu$ on $T\left(\mathbb{A}_K\right)$, which is independent of $\omega$ by the product formula.
\begin{note}
From the short exact sequence \eqref{toruseq}, we obtain
$$
L\left(X^*\left(\overline{T}\right),s\right) = \frac{\zeta_L \left(s\right)}{\zeta_K\left(s\right)}.
$$
\end{note}
\begin{lemma} \label{vollem}
With respect to the Haar measure $\mu$, we have
$$
\vol \left(T\left(\mathbb{A}_K\right)/T\left(K\right)\right) = d \frac{\Res_{s=1}\zeta_L\left(s\right)}{\Res_{s=1}\zeta_K\left(s\right)}.
$$
\end{lemma}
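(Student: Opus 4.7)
The plan is to compare $\vol(T(\mathbb{A}_K)/T(K))$ with the volumes of the norm-$1$ idele class groups of $K$ and $L$ via the short exact sequence \eqref{toruseq}, and then to invoke the analytic class number formula.

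First, applying Galois cohomology to \eqref{toruseq} together with Hilbert 90 (as in Note \ref{torhecnote}) yields the adelic and rational exact sequences $1 \to \mathbb{A}_K^* \to \mathbb{A}_L^* \to T(\mathbb{A}_K) \to 1$ and $1 \to K^* \to L^* \to T(K) \to 1$. Since $T$ is anisotropic, $T(\mathbb{A}_K)^1 = T(\mathbb{A}_K)$, and since $\|x\|_L = \|x\|_K^d$ for $x \in \mathbb{A}_K^*$, intersecting with norm-$1$ parts gives $(\mathbb{A}_L^*)^1 \cap \mathbb{A}_K^* = (\mathbb{A}_K^*)^1$; a diagram chase using surjectivity of $\|\cdot\|_K \colon \mathbb{A}_K^* \to \mathbb{R}_{>0}$ then produces
$$1 \to (\mathbb{A}_K^*)^1/K^* \to (\mathbb{A}_L^*)^1/L^* \to T(\mathbb{A}_K)/T(K) \to 1.$$

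Next, I would check the measure compatibility. Because $L(X^*(\overline{T}),s) = \zeta_L(s)/\zeta_K(s)$, the convergence factors defining $\mu$ multiply across \eqref{toruseq}, so the Tamagawa measures on $\mathbb{A}_K^*$, $\mathbb{A}_L^*$ and $T(\mathbb{A}_K)$ are compatible in the fiber-integral sense with respect to the adelic exact sequence. To descend to norm-$1$ parts, split $\mathbb{A}_K^* \cong (\mathbb{A}_K^*)^1 \times \mathbb{R}_{>0}$ and $\mathbb{A}_L^* \cong (\mathbb{A}_L^*)^1 \times \mathbb{R}_{>0}$ by choosing sections compatible with the inclusion, and put Haar measure $dt/t$ on each $\mathbb{R}_{>0}$ factor. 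Because the composite $\mathbb{R}_{>0} \to \mathbb{A}_K^* \to \mathbb{A}_L^* \to \mathbb{R}_{>0}$ is $t \mapsto t^d$, and $d(t^d)/t^d = d \cdot dt/t$, the induced measures on the norm-$1$ pieces satisfy $\mu_L^1 = \tfrac{1}{d}\, \mu_K^1 \cdot \mu$ in the fiber-integral sense for the sequence above.

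Finally, passing to the quotients by lattices, this measure identity gives
$$\vol(T(\mathbb{A}_K)/T(K)) = d \cdot \frac{\vol((\mathbb{A}_L^*)^1/L^*)}{\vol((\mathbb{A}_K^*)^1/K^*)},$$
and the analytic class number formula (see e.g.\ \cite{ANT}) evaluates the numerator and denominator as $\Res_{s=1}\zeta_L(s)$ and $\Res_{s=1}\zeta_K(s)$, respectively. The main obstacle is the second step: the factor of $d$ arises from the mismatch between the adelic norms $\|\cdot\|_K$ and $\|\cdot\|_L$ on $\mathbb{A}_K^*$, and confirming that it multiplies (rather than divides) the ratio of residues requires careful bookkeeping in the measure decomposition.
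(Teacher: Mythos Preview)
Your approach is correct but genuinely different from the paper's. The paper does not unwind the exact sequence \eqref{toruseq} at the level of measures; instead it invokes Ono's general formula
\[
\vol\!\left(T(\mathbb{A}_K)^1/T(K)\right) \;=\; \frac{|\Pic T|}{|\Sh(T)|}\, L\!\left(X^*(\overline{T}),1\right),
\]
then computes each factor: $\Pic T \cong \mathbb{Z}/d\mathbb{Z}$ (from $\Pic \mathbb{P}^{d-1}$ modulo the class of $Z(N_{\om})$), $\Sh(T)=0$ (via rationality of $T$ and a result of Sansuc), and $L(X^*(\overline{T}),1)=\Res_{s=1}\zeta_L(s)/\Res_{s=1}\zeta_K(s)$; finally $T(\mathbb{A}_K)^1=T(\mathbb{A}_K)$ since $T$ is anisotropic. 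Thus in the paper the factor $d$ appears as $|\Pic T|$, not as a Jacobian coming from $\|\cdot\|_L=\|\cdot\|_K^{\,d}$.

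Your route has the advantage of being self-contained: it avoids quoting Ono's main theorem and the vanishing of $\Sh(T)$, relying only on the analytic class number formula (equivalently, $\tau(\mathbb{G}_m)=1$) over $K$ and $L$. Indeed, what you sketch is essentially the specialisation of Ono's own inductive argument to this particular exact sequence. The cost is exactly the point you flag: the measure bookkeeping in the second step must be done carefully. Your heuristic is right---the pushforward of $dt/t$ under $t\mapsto t^{d}$ is $\tfrac{1}{d}\,dt/t$, so cancelling the $\mathbb{R}_{>0}$ factors from $\mu_{\mathbb{A}_L^*}=\mu_{\mathbb{A}_K^*}\cdot\mu$ leaves $\mu_{(\mathbb{A}_L^*)^1}=\tfrac{1}{d}\,\mu_{(\mathbb{A}_K^*)^1}\cdot\mu$---but to make this airtight you should fix compatible sections (e.g.\ the canonical archimedean one for $K$, and its image in $\mathbb{A}_L^*$) and verify that the quotient measure on $T(\mathbb{A}_K)$ obtained this way really is the $\mu$ of Section~\ref{section:HMV}, which amounts to checking that the invariant top forms on $\mathbb{G}_m$, $R_{L/K}\mathbb{G}_m$ and $T$ can be chosen compatibly with \eqref{toruseq}. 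The paper's argument trades this local computation for the global inputs $\Pic T$ and $\Sh(T)$.
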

\begin{proof}
By \cite[\S3.5]{AAT} and \cite[Main~Thm.,~p.~68]{OTNAT}, we have
$$
\vol \left(T\left(\mathbb{A}_K\right)^1/T\left(K\right)\right) = \frac{|\Pic T|}{|\Sh \left(T\right)|} L\left(X^*\left(\overline{T}\right),1\right),
$$
where $\Sh\left(T\right)$ is the Tate-Shafarevich group of $T$, i.e.\
$$
\Sh \left(T\right) = \ker\left(H_{\textrm{\'et}}^1\left(K,T\right) \rightarrow \prod_{v \in \Val \left(K\right)} H_{\textrm{\'et}}^1\left(K_v,T\right)\right).
$$
By \cite[Prop.~8.3,~p.~58]{GB} and \cite[Cor.~4.6,~p.~2568]{NVF}, the rationality of $T$ implies that $\Sh \left(T\right)$ is trivial. Further, we have $\Pic T \cong \mathbb{Z}/d\mathbb{Z}$ (see \cite[Prop.~II.6.5(c),~p.~133]{AG}). Since $\zeta_K\left(s\right)$ and $\zeta_L\left(s\right)$ both have a simple pole at $s=1$, we have $L\left(X^*\left(\overline{T}\right),1\right) = \frac{\Res_{s=1}\zeta_L\left(s\right)}{\Res_{s=1}\zeta_K\left(s\right)
}$. Finally, as $T$ is anisotropic, we have $T\left(\mathbb{A}_K\right)^1 = T\left(\mathbb{A}_K\right)$.
\end{proof}
\section{Heights and indicator functions} \label{sectionHIF}
In this section we define functions which allow us to use harmonic analysis to study weak Campana points. Let $L/K$ be an extension of number fields with $K$-basis $\om = \{\omega_0,\dots,\omega_{d-1}\}$ and Galois closure $E/K$. When $L=E$, for any $i,j \in \{0,\dots,d-1\}$ and $g \in G = \Gal\left(E/K\right)$, write
$$
\omega_i \cdot \omega_j = \sum_{k=0}^{d-1}a^{ij}_{k}\omega_k, \quad
g\left(\omega_i\right) = \sum_{k=0}^{d-1}b^g_{k}\omega_k, \quad
1 = \sum_{k=0}^{d-1}c_k \omega_k.
$$
\begin{mydef} \label{somdef}
When $L=E$, we define $S\left(\om\right)$ to be the minimal subset of $\Val\left(K\right)$ containing $S_{\infty}$ such that $a^{ij}_k, b^g_k,c_k \in \mathcal{O}_v$ for all $v \not\in S\left(\om\right)$, $i,j,k \in \{0,\dots,d-1\}$ and $g \in G$. Otherwise, we define $S\left(\om\right)$ to be the minimal subset of $\Val\left(K\right)$ containing $S_{\infty}$ such that $N_{\om}$ is an irreducible polynomial over $\mathcal{O}_{K,S\left(\om\right)}$.
\end{mydef}
\begin{remark} \label{somrem}
When $L=E$, by \eqref{normeq}, the $S\left(\om\right)$-integrality of the $a^{ij}_k$ and $b^g_k$ implies that $N_{\om}$ is defined over $\mathcal{O}_{K,S\left(\om\right)}$, while the $S\left(\om\right)$-integrality of the $c_k$ implies that the coefficients of $N_{\om}$ are not all divisible by some $\alpha \in \mathcal{O}_{K,S\left(\om\right)} \setminus \mathcal{O}_{K,S\left(\om\right)}^*$. Since $N_{\om}$ is irreducible over $K$, we deduce that $N_{\om}$ is irreducible over $\mathcal{O}_{K,S\left(\om\right)}$, hence, for any $L$, the Zariski closure of $Z\left(N_{\om}\right)$ in $\mathbb{P}^{d-1}_{\mathcal{O}_{K,S\left(\om\right)}}$ is $\Proj \mathcal{O}_{K,S\left(\om\right)}[x_0,\dots,x_{d-1}]/\left(N_{\om}\right)$.
\end{remark}
From now on, we fix the model $\left(\mathbb{P}^{d-1}_{\mathcal{O}_{K,S\left(\om\right)}},\mathcal{D}^{\om}_m\right)$ for $\left(\mathbb{P}^{d-1}_K,\Delta_m^{\om}\right)$, where $\mathcal{D}^{\om}_m = \left(1-\frac{1}{m}\right)\Proj \mathcal{O}_{K,S\left(\om\right)}[x_0,\dots,x_{d-1}]/\left(N_{\om}\right)$. We denote by $\mathcal{D}^{\om}_{\textrm{red}}$ the support of $\mathcal{D}^{\om}_m$.
\begin{note}
In both the Galois and non-Galois cases, the conditions on $S\left(\om\right)$ ensure that we may take the ``obvious'' model above. The potentially stronger conditions in the Galois case (in which we obtain our results) ensure compatibility between intersection multiplicity and toric multiplication, as we shall see in Section \ref{section:IS}.
\end{note}
\begin{remark} \label{ribrem}
When $L=E$ and $\om$ is a relative integral basis, we get $S\left(\om\right) = S_{\infty}$, since every algebraic integer is expressible as an $\mathcal{O}_K$-linear combination of elements of a relative integral basis, and $\mathcal{O}_L$ is closed under multiplication and conjugation.
\end{remark}
\subsection{Definitions}
\begin{mydef} \label{bthtdef} \cite[\S2.1]{RPBH}
For each place $v$ of $K$, we define the \emph{local height function}
$$
H_v : T\left(K_v\right) \rightarrow \mathbb{R}_{>0}, \quad
t_v \mapsto e^{\varphi_{\Sigma}\left(\deg_{T,E,v}\left(t_v\right)\right)\log q_v}.
$$
We then define the \emph{global height function}
$$
H: T\left(\mathbb{A}_K\right) \rightarrow \mathbb{R}_{>0}, \quad
\left(t_v\right)_v \mapsto \prod_{v \in \Val\left(K\right)}H_v\left(t_v\right).
$$
\end{mydef}
\begin{mydef}
For each place $v \not\in S\left(\om\right)$, define the function
$$
H'_v: T\left(K_v\right) \rightarrow \mathbb{R}_{>0}, \quad
x \mapsto \frac{\max\{|x_i|_v^d\}}{|N_{\om}\left(x\right)|_v}.
$$
\end{mydef}
\begin{remark} \label{lbrem}
Note that $H'_v\left(x\right) \geq 1$ for all $x \in T\left(K_v\right)$. Indeed, one may always select $v$-adic coordinates $x_i$ such that $\max\{|x_i|_v\} = 1$, and $N_{\om}$ has coefficients in $\mathcal{O}_v$ by Remark \ref{somrem}, so, by the strong triangle inequality, we have $|N_{\om}\left(x\right)|_v \leq 1$.
\end{remark}
\begin{lemma} \label{sdeflem}
For all but finitely many places $v \not\in S\left(\om\right)$, we have $H'_v = H_v$.
\end{lemma}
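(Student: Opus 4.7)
The plan is to compute both $H_v$ and $H'_v$ on coordinates of a point in $T\left(K_v\right)$ and check that they agree once finitely many places are discarded. First, I restrict attention to the cofinite set $V \subset \Val\left(K\right) \setminus S\left(\om\right)$ of non-archimedean places $v$ which are unramified in $E/K$ and at which the discriminant of $\om$ is a unit; its complement is finite by standard facts on ramification and basis discriminants. For $v \in V$ we have $e_v = 1$, and the reductions $\overline{l}_0,\dots,\overline{l}_{d-1}$ modulo any prime of $E$ above $v$ are linearly independent over the corresponding residue field.

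Given $t_v \in T\left(K_v\right)$ represented by $x = \left(x_0,\dots,x_{d-1}\right) \in K_v^d$, Proposition \ref{cocharprop} yields
\[
\deg_{T,E,v}\left(t_v\right) = \sum_{w \mid v} \frac{v\left(f_w\left(x\right)\right)}{d_w} n_w = \sum_{i=0}^{d-1} \frac{v\left(f_{w\left(i\right)}\left(x\right)\right)}{d_{w\left(i\right)}} e_i,
\]
where $w\left(i\right)$ is the place of $E$ whose $G_v$-orbit contains $e_i$. Since $\sum_{i=0}^{d-1} e_i = 0$ and $\varphi_{\Sigma}$ is piecewise linear with $\varphi_{\Sigma}\left(e_i\right) = 1$, any $\sigma = \sum_i b_i e_i$ can be rewritten as $\sum_i \left(b_i - \min_j b_j\right) e_i$, a non-negative combination inside the cone spanned by those $e_i$ with $b_i$ not minimal, giving $\varphi_{\Sigma}\left(\sigma\right) = \sum_i b_i - d\min_j b_j$. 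Applied to $\deg_{T,E,v}\left(t_v\right)$ and combined with $\prod_{w \mid v} f_w = N_{\om}$, this yields
\[
H_v\left(t_v\right) = q_v^{v\left(N_{\om}\left(x\right)\right) - d \min_{w \mid v} v\left(f_w\left(x\right)\right)/d_w}, \quad H'_v\left(x\right) = q_v^{v\left(N_{\om}\left(x\right)\right) - d \min_i v\left(x_i\right)},
\]
so the lemma reduces to showing $\min_{w \mid v} v\left(f_w\left(x\right)\right)/d_w = \min_i v\left(x_i\right)$.

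Both quantities shift by $v\left(\lambda\right)$ under $x \mapsto \lambda x$, so I assume $\min_i v\left(x_i\right) = 0$ and must produce some $w \mid v$ with $v\left(f_w\left(x\right)\right) = 0$. By Remark \ref{somrem} each $f_w$ has $\mathcal{O}_v$-integral coefficients, hence $v\left(f_w\left(x\right)\right) \geq 0$, and the goal is to find $w$ with $\overline{f}_w\left(\overline{x}\right) \neq 0$. The key point: because $\overline{x}$ has coordinates in $k_v$ it is $G_v$-fixed, and by Lemma \ref{galactlem} any $g \in G_v$ satisfies $g\left(\overline{l}_i\left(\overline{x}\right)\right) = \overline{l}_{g\left(i\right)}\left(\overline{x}\right)$; thus the vanishing of one linear factor of $\overline{f}_w$ at $\overline{x}$ forces the vanishing of its entire $G_v$-orbit. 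If all $\overline{f}_w\left(\overline{x}\right) = 0$ then every $\overline{l}_i\left(\overline{x}\right) = 0$, contradicting the linear independence of the $\overline{l}_i$ combined with $\overline{x} \neq 0$. I anticipate the main technical obstacle to be the piecewise-linear bookkeeping in $\varphi_{\Sigma}$; the essential conceptual step is the Galois-orbit argument at the end.
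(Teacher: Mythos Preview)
Your proof is correct, but it takes a very different route from the paper's. The paper simply observes that $H'_v$ is the local Weil height attached to the monomial basis of global sections of $-K_{\mathbb{P}^{d-1}}$ and cites the general fact (e.g.\ \cite[\S2.2.3]{IIVA}) that any two adelic metrisations of the same line bundle have equal local heights at all but finitely many places. You instead unwind everything explicitly: you compute $\varphi_{\Sigma}\left(\deg_{T,E,v}\left(t_v\right)\right)$ via Proposition \ref{cocharprop}, reduce to the identity $\min_{w \mid v} v\left(f_w\left(x\right)\right)/d_w = \min_i v\left(x_i\right)$, and settle this with a clean Galois-orbit argument on the residue field. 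The paper's approach is a two-line appeal to general machinery; yours is self-contained and actually names the bad places (ramified $v$ and those where the discriminant of $\om$ is not a unit), which is useful if one later wants an explicit description of $S'\left(\om\right)$. One minor point: your citation of Remark \ref{somrem} for the $\mathcal{O}_v$-integrality of the $f_w$ is slightly off, since that remark only treats $N_{\om}$ itself; the integrality of the individual factors really comes from the integrality of the $\omega_i$ at $v$, which you have implicitly built into your choice of $V$.
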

\begin{proof}
Note that $H_v'$ is the local Weil function associated to the basis of global sections of $-K_{\mathbb{P}^{d-1}}$ consisting of all monomials of degree $d$ in \cite[Def.~2.1.1,~p.~606]{RPBH}. It is well-known (see \cite[\S2.2.3]{IIVA}) that two height functions corresponding to adelic metrisations of the same line bundle are equal over all but finitely many places.
\end{proof}
\begin{mydef} \label{sdef}
We define the finite set 
$$
S'\left(\om\right) = S\left(\om\right) \cup \{v \not\in S\left(\om\right) : H'_v \neq H_v\} \cup\{v \in \Val\left(K\right): E/K \textrm{ is ramified at $v$}\}.
$$
\end{mydef}
\begin{mydef} \label{inddef}
For each place $v \not\in S\left(\om\right)$, define the \emph{local indicator function}
$$
\phi_{m,v} : T\left(K_v\right) \rightarrow \{0,1\}, \quad
t_v \mapsto
\begin{cases}
1 \textrm{ if } H'_v\left(t_v\right) = 1 \textrm{ or } H'_v\left(t_v\right) \geq q_v^m, \\
0 \textrm { otherwise}. 
\end{cases}
$$
Setting $\phi_{m,v} = 1$ for $v \in S\left(\om\right)$, we then define the \emph{global indicator function}
$$
\phi_m : T\left(\mathbb{A}_K\right) \rightarrow \{0,1\}, \quad
\left(t_v\right)_v \mapsto \prod_{v \in \Val \left(K\right)} \phi_{m,v}\left(t_v\right).
$$
\end{mydef}
\begin{remark}\label{ctsrem}
Let $v \not\in S\left(\om\right)$ be a non-archimedean place of $K$. Since $H'_v$ is continuous with discrete image in $\mathbb{R}_{>0}$, its level sets are clopen. It follows that $\phi_{m,v}$ is continuous for all $v \in \Val \left(K\right)$. Also, since $\phi_{m,v}\left(T\left(\mathcal{O}_v\right)\right) = 1$ for all $v \not\in S'\left(\om\right)$ by Lemma \hyperref[lplem1]{\ref*{lplem}\ref*{lplem1}}, we see that $\phi_m$ is well-defined and continuous on $T\left(\mathbb{A}_K\right)$.
\end{remark}
\begin{lemma} \label{identlem}
The weak Campana $\mathcal{O}_{K,S\left(\om\right)}$-points of $\left(\mathbb{P}^{d-1}_K,\Delta^{\om}_m\right)$ are precisely the rational points $t \in T\left(K\right)$ such that $\phi_m\left(t\right) = 1$.
\end{lemma}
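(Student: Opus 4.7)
The plan is to match, at each non-archimedean place $v \notin S(\om)$, the weak Campana condition at $v$ with the local indicator condition $\phi_{m,v}(t_v) = 1$, via a primitive $v$-adic representative of a rational point.

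By \eqref{normeq}, the form $N_{\om}$ is irreducible over $K$, so the orbifold divisor $\Delta^{\om}_m$ has a single prime component $Z(N_{\om})$ with weight $1 - 1/m$; in particular $m_\alpha = m$, the exceptional case $\epsilon_\alpha = 1$ does not arise, and the weak Campana condition reduces to requiring that $n_v(\mathcal{D}^{\om}_{\textrm{red}}, P)$ be either $0$ or at least $m$ at every $v \notin S(\om)$. Moreover, for any $(x_0, \dots, x_{d-1}) \in K^d \setminus \{0\}$, the element $x_0\omega_0 + \dots + x_{d-1}\omega_{d-1}$ is a nonzero element of $L$ (as $\om$ is a $K$-basis), so $N_{\om}(x) = N_{L/K}(x_0\omega_0 + \dots + x_{d-1}\omega_{d-1}) \neq 0$; hence $Z(N_{\om})(K) = \emptyset$ and $\mathbb{P}^{d-1}(K) = T(K)$, making the restriction $t \in T(K)$ in the statement automatic.

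Next, for $P \in T(K)$ and $v \notin S(\om)$, the valuative criterion of properness produces a representative $x = (x_0, \dots, x_{d-1}) \in \mathcal{O}_v^d$ of $P$ with $\max_i |x_i|_v = 1$, yielding the canonical $\mathcal{O}_v$-point $\mathcal{P}_v$. By Remark \ref{somrem}, $\mathcal{D}^{\om}_{\textrm{red}}$ is cut out by the single equation $N_{\om}$ with coefficients in $\mathcal{O}_{K,S(\om)}$, so $\mathcal{P}_v^* \mathcal{D}^{\om}_{\textrm{red}}$ is the principal ideal $(N_{\om}(x)) \subset \mathcal{O}_v$. Since $N_{\om}(x) \neq 0$, this ideal has finite colength, giving
$$
n_v(\mathcal{D}^{\om}_{\textrm{red}}, P) = v(N_{\om}(x)).
$$
Using the same primitive representative, $H'_v(t_v) = \max_i |x_i|_v^d / |N_{\om}(x)|_v = q_v^{v(N_{\om}(x))}$, and therefore $\phi_{m,v}(t_v) = 1$ if and only if $v(N_{\om}(x)) = 0$ or $v(N_{\om}(x)) \geq m$, which matches the reduced weak Campana condition at $v$ verbatim.

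Since neither side imposes any condition for $v \in S(\om)$---the weak Campana definition ranges only over $v \notin S$, and $\phi_{m,v} \equiv 1$ there by construction---taking the product over all $v \in \Val(K)$ completes the equivalence. The only delicate step is the identification $n_v(\mathcal{D}^{\om}_{\textrm{red}}, P) = v(N_{\om}(x))$, which hinges on having an integral defining equation for the model $\mathcal{D}^{\om}_{\textrm{red}}$ and a primitive $v$-adic representative for $P$---secured by Remark \ref{somrem} and the valuative criterion respectively.
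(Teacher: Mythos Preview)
Your proof is correct and follows the same approach as the paper: the key identity $H'_v(t) = q_v^{n_v(\mathcal{D}^{\om}_{\textrm{red}},t)}$ via a primitive $\mathcal{O}_v$-representative is exactly what the paper uses. Your version is simply more explicit about the surrounding reductions (single prime component, $Z(N_{\om})(K)=\emptyset$, and the vacuous condition at $v\in S(\om)$), which the paper leaves implicit.
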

\begin{proof}
Take $v \not\in S\left(\om\right)$, and let $t_0,\dots,t_{d-1}$ be a set of $\mathcal{O}_v$-coordinates for $t \in T\left(K\right)$ with at least one $t_i \in \mathcal{O}_v^*$. Then we have
\[
H'_v\left(t\right) = \frac{1}{|N_{\om}\left(t_0,\dots,t_{d-1}\right)|_v} = q_v^{v\left(N_{\om}\left(t_0,\dots,t_{d-1}\right)\right)} = q_v^{n_v\left(\mathcal{D}^{\om}_{\textrm{red}},t\right)}. \qedhere
\]
\end{proof}
\subsection{Invariant subgroups} \label{section:IS}
For this section, let $L = E$ be Galois over $K$.
\begin{lemma} \label{smlem}
For all $v \not\in S\left(\om\right)$ and $x,y \in T\left(K_v\right)$, we have
$$
H'_v \left(x \cdot y\right) \leq H'_v \left(x\right) H'_v \left(y\right).
$$
\end{lemma}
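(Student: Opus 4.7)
The plan is to use the explicit torus multiplication on $T \cong R_{E/K}\mathbb{G}_m/\mathbb{G}_m$ to compute coordinates of $x\cdot y$, then apply the strong triangle inequality together with multiplicativity of the norm form. The condition $v\notin S(\om)$ will provide exactly the $\mathcal{O}_v$-integrality of the structure constants needed.

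First I would fix lifts $\alpha = \sum_{i} x_i \omega_i$ and $\beta = \sum_{j} y_j \omega_j$ in $E_v^* = (E\otimes_K K_v)^*$ of $x,y\in T(K_v)\cong (E\otimes_K K_v)^*/K_v^*$. Under the identification $\om$ of $E$ with $K^d$, the torus product $x\cdot y$ is represented by $\alpha\beta$, whose coordinates with respect to $\om$ are
\[
z_k = \sum_{i,j=0}^{d-1} a^{ij}_{k}\, x_i y_j, \qquad k=0,\dots,d-1,
\]
by the defining relations $\omega_i\cdot\omega_j = \sum_k a^{ij}_k \omega_k$ from the beginning of Section \ref{sectionHIF}. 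Since $H'_v$ is well-defined on projective points (both numerator and denominator scale by $|\lambda|_v^d$ under $x\mapsto \lambda x$), the tuple $(z_0,\dots,z_{d-1})$ is a legitimate choice of coordinates for $x\cdot y$ with which to evaluate $H'_v(x\cdot y)$.

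Next, because $v\notin S(\om)$, Definition \ref{somdef} gives $a^{ij}_k \in \mathcal{O}_v$ for all $i,j,k$. The strong triangle inequality then yields
\[
|z_k|_v \leq \max_{i,j}\bigl|a^{ij}_k x_i y_j\bigr|_v \leq \max_i |x_i|_v\cdot \max_j |y_j|_v,
\]
so $\max_k |z_k|_v^d \leq \max_i |x_i|_v^d \cdot \max_j |y_j|_v^d$. Meanwhile, the identity \eqref{normeq} shows that $N_{\om}$ corresponds via $\om$ to the norm $N_{E/K}$, which is multiplicative, so
\[
N_{\om}(z_0,\dots,z_{d-1}) = N_{E/K}(\alpha\beta) = N_{E/K}(\alpha)N_{E/K}(\beta) = N_{\om}(x)\, N_{\om}(y).
\]
Dividing the two displays gives the claimed submultiplicativity
\[
H'_v(x\cdot y) = \frac{\max_k |z_k|_v^d}{|N_{\om}(z)|_v} \leq \frac{\max_i |x_i|_v^d \cdot \max_j |y_j|_v^d}{|N_{\om}(x)|_v\,|N_{\om}(y)|_v} = H'_v(x)\,H'_v(y).
\]

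There is no real analytic obstacle; the only care needed is bookkeeping, namely that torus multiplication on $T \subset \mathbb{P}^{d-1}_K$ really is induced by multiplication in $E^*/K^*$ and that one may compute $H'_v$ on any representative tuple of coordinates. This is precisely where the Galois hypothesis $L=E$ of this subsection and the integrality assumption from Definition \ref{somdef} interact: they guarantee that the structure constants $a^{ij}_k$ defining the multiplication lie in $\mathcal{O}_v$, which is the compatibility between intersection multiplicity and toric multiplication alluded to in the note preceding Remark \ref{ribrem}.
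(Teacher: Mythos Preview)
Your proof is correct and follows essentially the same approach as the paper's own proof: choose coordinates for $x$ and $y$, compute $z_k = \sum_{i,j} a^{ij}_k x_i y_j$ for the product, use $a^{ij}_k \in \mathcal{O}_v$ (from $v \notin S(\om)$) together with the strong triangle inequality to bound $\max_k |z_k|_v$, and use multiplicativity of $N_{\om}$ for the denominator. Your additional remarks on well-definedness under projective rescaling and on the role of the Galois hypothesis are accurate but not needed for the argument itself.
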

\begin{proof}
Choose sets of projective coordinates $\{x_0,\dots,x_{d-1}\}$ and $\{y_0,\dots,y_{d-1}\}$ for $x$ and $y$ respectively. Note that
$$
\left(x_0 \omega_0 + \dots + x_{d-1} \omega_{d-1}\right) \cdot \left(y_0 \omega_0 + \dots + y_{d-1} \omega_{d-1}\right) = \left(z_0 \omega_0 + \dots + z_{d-1} \omega_{d-1}\right),
$$
where, for $a^{ij}_k \in \mathcal{O}_v$ as in Definition \ref{somdef}, we have 
$$
z_k = \sum_{i=0}^{d-1}\sum_{j=0}^{d-1} a^{ij}_{k} x_i y_j.
$$
Using $N_{\om}\left(x \cdot y\right) = N_{\om}\left(x\right)N_{\om}\left(y\right)$ and the strong triangle inequality, we deduce that
\begin{align*}
H'_v\left(x \cdot y\right) & = \frac{\max\{|\sum_{i=0}^{d-1}\sum_{j=0}^{d-1}a^{ij}_k x_i y_j|_v^{d}\}}{|N_{\om}\left(x \cdot y\right)|_v} \\
& \leq \max\{|a^{ij}_{k}|_v^d\} \frac{\max\{|x_i|_v^d\}}{|N_{\om}\left(x\right)|_v} \frac{\max\{|y_j|_v^{d}\}}{|N_{\om}\left(y\right)|_v} \leq H'_v\left(x\right) H'_v\left(y\right). \qedhere
\end{align*}
\end{proof}
\begin{lemma} \label{mplem}
For any place $v \not\in S\left(\om\right)$, the level set
$$
\mathcal{K}_v = \{t_v \in T\left(K_v\right) : H'_v\left(t_v\right) = 1\}
$$
is a subgroup of $T\left(\mathcal{O}_v\right)$.
\end{lemma}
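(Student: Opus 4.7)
The plan is to verify that $\mathcal{K}_v$ contains the identity, is closed under the torus multiplication, and is compact, and then to deduce closure under inverse (and hence containment in $T(\mathcal{O}_v)$) via a standard compactness argument in topological groups.

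First I would check that $1_T \in \mathcal{K}_v$. Writing $1 = c_0 \omega_0 + \dots + c_{d-1}\omega_{d-1}$ with $c_k \in \mathcal{O}_v$ (by Definition \ref{somdef}), the identity of $T(K) \cong L^*/K^*$ has projective coordinates $[c_0 : \cdots : c_{d-1}]$. Now $N_{\om}(c_0,\ldots,c_{d-1}) = N_{E/K}(1) = 1$, and since $N_{\om}$ is homogeneous of degree $d$ with coefficients in $\mathcal{O}_v$ (by Remark \ref{somrem}), not all $c_k$ can lie in $\pi_v\mathcal{O}_v$ (else $\pi_v^d$ would divide $1$). Hence $\max_k|c_k|_v = 1$ and $|N_{\om}(c_0,\ldots,c_{d-1})|_v = 1$, yielding $H'_v(1_T)=1$. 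Closure of $\mathcal{K}_v$ under multiplication is immediate from Lemma \ref{smlem} combined with the lower bound $H'_v \geq 1$ of Remark \ref{lbrem}.

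For compactness, note that by rescaling each $t \in \mathcal{K}_v$ admits coordinates $(x_0,\ldots,x_{d-1}) \in \mathcal{O}_v^d$ with $\max_i|x_i|_v=1$ and $|N_{\om}(x)|_v=1$. The locus of such tuples is a closed subset of the compact space $\mathcal{O}_v^d$, and its image in $\mathbb{P}^{d-1}(K_v)$ is precisely $\mathcal{K}_v$, which is therefore compact, and in particular closed in $T(K_v)$.

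The main obstacle is closure under inverse, which I would obtain by the following standard argument. For $t \in \mathcal{K}_v$, all powers $t^n$ ($n \geq 1$) lie in $\mathcal{K}_v$ by multiplicative closure. Compactness produces a convergent subsequence $t^{n_k} \to s \in \mathcal{K}_v$; after passing to a further subsequence we may assume $n_{k+1}-n_k \geq 2$ and $n_{k+1}-n_k \to \infty$. Then
$$
t^{n_{k+1}-n_k} = t^{n_{k+1}}\cdot (t^{n_k})^{-1} \to s\cdot s^{-1} = 1_T,
$$
so $t^{n_{k+1}-n_k-1} \to t^{-1}$ in $T(K_v)$. Since each $t^{n_{k+1}-n_k-1}$ lies in $\mathcal{K}_v$ and $\mathcal{K}_v$ is closed, we conclude $t^{-1}\in \mathcal{K}_v$. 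Thus $\mathcal{K}_v$ is a compact subgroup of $T(K_v)$, and hence is contained in the maximal compact subgroup $T(\mathcal{O}_v)$.
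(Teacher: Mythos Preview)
Your proof is correct but follows a genuinely different route from the paper. Both arguments handle the identity and closure under multiplication in the same way (via Lemma~\ref{smlem} and Remark~\ref{lbrem}), but they diverge at the inverse step. The paper computes the inverse algebraically as
\[
(x_0\omega_0+\dots+x_{d-1}\omega_{d-1})^{-1} = N_{\om}(x)^{-1}\prod_{g\neq 1_G}\bigl(x_0 g(\omega_0)+\dots+x_{d-1}g(\omega_{d-1})\bigr),
\]
then applies submultiplicativity to bound $H'_v(x^{-1})$ by $\prod_{g\neq 1_G}H'_v(g(x))$, and finally uses the $S(\om)$-integrality of the Galois structure constants $b_k^g$ to show each factor equals $1$. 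Containment in $T(\mathcal{O}_v)$ is obtained separately via Proposition~\ref{cocharprop}. Your approach replaces all of this with topology: compactness of $\mathcal{K}_v$ plus a convergent-subsequence-of-powers argument gives closure under inverse, and maximality of $T(\mathcal{O}_v)$ among compact subgroups (a consequence of Lemma~\hyperref[lplem1]{\ref*{lplem}\ref*{lplem1}}, since $T(K_v)/T(\mathcal{O}_v)$ embeds in the torsion-free discrete group $X_*(T_v)$) then gives the containment. Your route never touches the constants $b_k^g$, so it uses strictly less of the data built into $S(\om)$; the paper's explicit computation, on the other hand, makes the Galois action on $\mathcal{K}_v$ visible, which is in keeping with how the conjugate characters $\chi^g$ are used later.
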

\begin{proof}
From Proposition \ref{cocharprop} and Lemma \hyperref[lplem1]{\ref*{lplem}\ref*{lplem1}}, it is clear that $H_v'\left(t_v\right) = 1$ implies $t_v \in T\left(\mathcal{O}_v\right)$, so $\mathcal{K}_v \subset T\left(\mathcal{O}_v\right)$. It is also clear that $H'_v\left(1\right) = 1$, and closure under multiplication follows from Lemma \ref{smlem} and Remark \ref{lbrem}. It only remains to verify that $x \in \mathcal{K}_v$ implies $x^{-1} \in \mathcal{K}_v$. Let $x \in \mathcal{K}_v$, and choose coordinates $x_0,\dots,x_{d-1}$ with $\max\{|x_i|_v^d\}$ = 1. Since $H'_v\left(x\right) = 1$, we must have $|N_{\om}\left(x\right)|_v = 1$. Note that
$$
\left(x_0 \omega_0 + \dots + x_{d-1} \omega_{d-1}\right)^{-1} = \frac{1}{N_{\om}\left(x_0,\dots,x_{d-1}\right)}\prod_{\substack{g \in G \\ g \neq 1_G}}\left(x_0 g\left(\omega_0\right) + \dots + x_0 g\left(\omega_0\right)\right).
$$
Recursively applying Lemma \ref{smlem}, we obtain
$$
H_v'\left(x^{-1}\right) \leq \prod_{\substack{g \in G \\ g \neq 1_G}} H_v'\left(g\left(x\right)\right).
$$
By Remark \ref{lbrem}, it suffices to show that, for any $g \in G$, we have $H_v'\left(g\left(x\right)\right) = 1$. Since $N_{\om}\left(g\left(x\right)\right) = N_{\om}\left(x\right)$, it suffices by Remark \ref{lbrem} to show that $\max\{|g\left(x\right)_i|_v\} \leq 1$. This follows from the fact that $b^g_k \in \mathcal{O}_v$ since $v \not\in S\left(\om\right)$, see Definition \ref{somdef}.
\end{proof}
\begin{corollary} \label{invcor}
For every place $v \not\in S\left(\om\right)$, the function $H'_v$ is $\mathcal{K}_v$-invariant.
\end{corollary}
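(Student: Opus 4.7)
The plan is to prove the two-sided inequality $H'_v(k \cdot x) = H'_v(x)$ for all $k \in \mathcal{K}_v$ and $x \in T(K_v)$ by applying the submultiplicativity of $H'_v$ from Lemma \ref{smlem} in both directions, exploiting the fact that $\mathcal{K}_v$ is closed under inversion by Lemma \ref{mplem}.

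More precisely, I would first apply Lemma \ref{smlem} directly to obtain
\[
H'_v(k \cdot x) \leq H'_v(k)\, H'_v(x) = H'_v(x),
\]
where the equality uses $H'_v(k) = 1$ since $k \in \mathcal{K}_v$. For the reverse inequality, the key observation is that $\mathcal{K}_v$ is a subgroup by Lemma \ref{mplem}, so $k^{-1} \in \mathcal{K}_v$, and hence
\[
H'_v(x) = H'_v(k^{-1} \cdot (k \cdot x)) \leq H'_v(k^{-1})\, H'_v(k \cdot x) = H'_v(k \cdot x).
\]
Combining the two inequalities yields $H'_v(k \cdot x) = H'_v(x)$, establishing $\mathcal{K}_v$-invariance of $H'_v$.

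There is no real obstacle here; the corollary is essentially a formal consequence of submultiplicativity together with the subgroup property of $\mathcal{K}_v$, and the proof should be only a few lines long. The only subtlety worth flagging is that the argument uses both $H'_v(k) = 1$ and $H'_v(k^{-1}) = 1$, which is exactly what Lemma \ref{mplem} guarantees.
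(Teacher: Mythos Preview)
Your proof is correct and essentially identical to the paper's own argument: the paper also applies Lemma~\ref{smlem} to obtain $H'_v(x\cdot y)\le H'_v(y)$, then uses $x^{-1}\in\mathcal{K}_v$ from Lemma~\ref{mplem} together with $H'_v(y)=H'_v(x^{-1}\cdot(x\cdot y))\le H'_v(x\cdot y)$ to conclude. The only difference is notation.
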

\begin{proof}
Take $x \in \mathcal{K}_v$, and let $y \in T\left(K_v\right)$. Then by Lemma \ref{smlem}, we have
$$
H'_v\left(x \cdot y\right) \leq H'_v\left(x\right) H'_v \left(y\right) = H'_v\left(y\right),
$$
while on the other hand, since $x^{-1} \in \mathcal{K}_v$ by Lemma \ref{mplem}, we have
$$
H'_v\left(y\right) = H'_v \left(x^{-1} \cdot \left(x \cdot y\right)\right) \leq H'_v \left(x^{-1}\right) H'_v \left(x \cdot y\right) = H'_v \left(x \cdot y\right),
$$
so we conclude that $H'_v \left(x \cdot y\right) = H'_v\left(y\right)$.
\end{proof}
\begin{lemma} \label{mcslem}
For each place $v \not\in S\left(\om\right)$, the functions $H_v$ and $\phi_{m,v}$ are both $\mathcal{K}_v$-invariant and $1$ on $\mathcal{K}_v$. Further, $\mathcal{K}_v$ is compact, open and of finite index in $T\left(\mathcal{O}_v\right)$. Moreover, when $v \not \in S'\left(\om\right)$, we have $\mathcal{K}_v = T\left(\mathcal{O}_v\right)$.
\end{lemma}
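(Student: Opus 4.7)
The plan is to split the lemma into three essentially independent parts. The core observation is that $H_v$ factors through $\deg_{T,E,v}$ by Definition \ref{bthtdef}, and Lemma \hyperref[lplem1]{\ref*{lplem}\ref*{lplem1}} identifies $\ker \deg_{T,E,v}$ with $T\left(\mathcal{O}_v\right)$; hence $H_v$ is $T\left(\mathcal{O}_v\right)$-invariant and identically $1$ on $T\left(\mathcal{O}_v\right)$. Since $\mathcal{K}_v \subset T\left(\mathcal{O}_v\right)$ by Lemma \ref{mplem}, this immediately gives both the $\mathcal{K}_v$-invariance and the triviality of $H_v$ on $\mathcal{K}_v$. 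For $\phi_{m,v}$, I would note that it depends only on $H'_v$: the $\mathcal{K}_v$-invariance then follows from Corollary \ref{invcor}, while $\phi_{m,v}|_{\mathcal{K}_v} \equiv 1$ is immediate from the definition of $\phi_{m,v}$ together with $\mathcal{K}_v = \{H'_v = 1\}$.

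For the topological claims, I would invoke Remark \ref{ctsrem}, which records that $H'_v$ is continuous with discrete image in $\mathbb{R}_{>0}$; hence $\mathcal{K}_v = \left(H'_v\right)^{-1}\left(1\right)$ is both open and closed in $T\left(K_v\right)$. Being closed and contained in the compact group $T\left(\mathcal{O}_v\right)$ (Lemma \ref{mplem}), $\mathcal{K}_v$ is compact. Being an open subgroup of the compact group $T\left(\mathcal{O}_v\right)$, its cosets form a disjoint open cover of $T\left(\mathcal{O}_v\right)$, which by compactness must be finite, yielding finite index.

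For the last assertion, if $v \not\in S'\left(\om\right)$ then $H_v = H'_v$ by Definition \ref{sdef}, so the already-established triviality of $H_v$ on $T\left(\mathcal{O}_v\right)$ transfers to $H'_v$, giving $T\left(\mathcal{O}_v\right) \subset \mathcal{K}_v$; combining with Lemma \ref{mplem} yields the equality. I do not foresee any genuine obstacle: the argument is entirely a matter of chaining earlier results. The only mild subtlety is keeping track of which of $H_v$ or $H'_v$ is the natural object in each step, which is precisely what motivates the enlargement from $S\left(\om\right)$ to $S'\left(\om\right)$ in Definition \ref{sdef}.
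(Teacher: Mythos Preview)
Your proof is correct and follows essentially the same line as the paper's. The only cosmetic difference is that you derive the $T\left(\mathcal{O}_v\right)$-invariance of $H_v$ directly from Definition \ref{bthtdef} and Lemma \hyperref[lplem1]{\ref*{lplem}\ref*{lplem1}}, whereas the paper cites \cite[Thm.~2.1.6(i),~p.~608]{RPBH} for this fact; otherwise the topological argument (open via discrete image, compact as a closed subset of $T\left(\mathcal{O}_v\right)$, finite index via the open cover by cosets) and the final step using $H_v = H'_v$ for $v \not\in S'\left(\om\right)$ match the paper exactly.
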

\begin{proof}
Let $v \not\in S\left(\om\right)$. By \cite[Thm.~2.1.6(i),~p.~608]{RPBH}, $H_v$ is $T\left(\mathcal{O}_v\right)$-invariant, hence trivial and invariant on all of $T\left(\mathcal{O}_v\right)$. By Corollary \ref{invcor}, the function $\phi_{m,v}$ is $\mathcal{K}_v$-invariant; since $\phi_{m,v}\left(1\right) = 1$, it is also trivial on $\mathcal{K}_v$.

Now, since $\mathcal{K}_v = \left(H_v'|_{T\left(\mathcal{O}_v\right)}\right)^{-1}\left(\{1\}\right)$, it is open. Since the cosets of an open subgroup form an open cover of a topological group, any open subgroup of a compact topological group is closed and of finite index. Then $\mathcal{K}_v \subset T\left(\mathcal{O}_v\right)$ is closed, hence compact, and of finite index.
Finally, we note that, when $v \not\in S'\left(\om\right)$, we have $H'_v = H_v$, and $H_v^{-1}\left(\{1\}\right) = T\left(\mathcal{O}_v\right)$ by Lemma \hyperref[lplem1]{\ref*{lplem}\ref*{lplem1}} and the equality $\varphi^{-1}_{\Sigma}\left(\{0\}\right) = \{0\}$, so $\mathcal{K}_v = T\left(\mathcal{O}_v\right)$.
\end{proof}
\begin{mydef} \label{kudef}
For each $v \in S\left(\om\right)$, set $\mathcal{K}_v = T\left(\mathcal{O}_v\right)$. Let $\mathcal{K} = \prod_{v \in \Val\left(K\right)} \mathcal{K}_v$, and let $\mathcal{U}$ be the group of automorphic characters of $T$ which are trivial on $\mathcal{K}$. 
\end{mydef}
\subsection{Height zeta function and Fourier transforms}
\begin{mydef}
For $\Re s \gg 0$, we define the \emph{height zeta function}
$$
Z_m :  \mathbb{C} \rightarrow \mathbb{C}, \quad
s \mapsto \sum_{x \in \mathbb{P}^{d-1}\left(K\right)} \frac{\phi_{m}\left(x\right)}{H\left(x\right)^s}.
$$
\end{mydef}
\begin{mydef}
Let $\mu_v$ and $\mu$ be the Haar measures introduced in Section \ref{section:HMV}. Let $f: T\left(\mathbb{A}_K\right) \rightarrow \mathbb{C}$ be a continuous function given as a product of local factors $f_v: T\left(K_v\right) \rightarrow \mathbb{C}$ such that $f_v\left(T\left(\mathcal{O}_v\right)\right) = 1$ for all but finitely many places $v \in \Val\left(K\right)$. For each place $v \in \Val\left(K\right)$ and each character $\chi$ of $T\left(\mathbb{A}_K\right)$, we define the \emph{local Fourier transform of $\chi_v$ with respect to $f_v$} to be
$$
\widehat{H}_v\left(f_v,\chi_v;-s\right) = \int_{T\left(K_v\right)}\frac{f_v\left(t_v\right)\chi_v\left(t_v\right)}{H_v\left(t_v\right)^s}d\mu_v
$$
for all $s \in \mathbb{C}$ for which the integral exists. We then define the \emph{global Fourier transform of $\chi$ with respect to $f$} to be
$$
\widehat{H}\left(f,\chi;-s\right) = \prod_{v \in \Val\left(K\right)} \widehat{H}_v\left(f_v,\chi_v;-s\right) = \int_{T\left(\mathbb{A}_K\right)}\frac{f\left(t\right)\chi\left(t\right)}{H\left(t\right)^s}d\mu.
$$
\end{mydef}
\section{Weak Campana points} \label{sectionWCP}
In this section we prove Theorem \ref{mainthm}. Fix an extension of number fields $L/K$ of degree $d$ with $K$-basis $\om$, set $T = T_{\om}$ as in Section \ref{sectionTNT} and let $m \in \mathbb{Z}_{\geq 2}$.
\subsection{Strategy}
Following \cite{NVF} and \cite{RPBH}, we will apply a Tauberian theorem \cite[Thm.~3.3.2,~p.~624]{RPBH} to our height zeta function $Z_m\left(s\right)$ in order to find an asymptotic for the number of weak Campana points of bounded height. By \emph{loc.\ cit.,} it suffices to show that $Z_m\left(s\right)$ is absolutely convergent for $\Re s > \frac{1}{m}$ and that $Z_m\left(s\right)\left(s-\frac{1}{m}\right)^{b\left(d,m\right)}$ admits an extension to a holomorphic function on $\Re s \geq \frac{1}{m}$ which is not zero at $s = \frac{1}{m}$.
In order to do this, we will apply the version of the Poisson summation formula given by Bourqui \cite[Thm.~3.35,~p.~64]{FZH}.
Formally applying this version with $\mathcal{G} = T\left(\mathbb{A}_K\right)$, $\mathcal{H} = T\left(K\right)$, $dg = d\mu$, $dh$ the discrete measure on $T\left(K\right)$ and $F\left(t\right) = \frac{\phi_m\left(t\right)}{H\left(t\right)^s}$ for some $s \in \mathbb{C}$ with $\Re s > \frac{1}{m}$ gives
\begin{equation} \label{formappeq}
 Z_m\left(s\right) = \frac{1}{\vol\left(T\left(\mathbb{A}_K\right)/T\left(K\right)\right)} \sum_{\chi \in \left(T\left(\mathbb{A}_K\right)/T\left(K\right)\right)^{\wedge} } \widehat{H}\left(\phi_m, \chi; -s\right).
\end{equation}
\subsection{Analytic properties of Fourier transforms}
\begin{lemma} \label{aclem}
For any place $v \in \Val\left(K\right)$, any character $\chi_v$ of $T\left(K_v\right)$ and any $\epsilon > 0$, the local Fourier transform $\widehat{H}_v\left(\phi_{m,v},\chi_v;-s\right)$ is absolutely convergent and is bounded uniformly (in terms of $\epsilon$ and $v$) on $\Re s \geq \epsilon$.
\end{lemma}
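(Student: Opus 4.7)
The strategy is to bound the Fourier transform termwise using $|\chi_v| = 1$ and $\phi_{m,v}\leq 1$, reduce to the case $\Re s = \epsilon$ via $H_v\geq 1$, and then apply Lemma~\ref{lplem} to recast the integral in terms of $\varphi_\Sigma$.

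First, since $|\chi_v(t_v)| = 1$ and $0\leq\phi_{m,v}\leq 1$, we have
\[
|\widehat H_v(\phi_{m,v},\chi_v;-s)| \leq \int_{T(K_v)} H_v(t_v)^{-\Re s}\,d\mu_v.
\]
Because $\Sigma$ is complete and $\varphi_\Sigma(e_i)=1$ on each generator, $\varphi_\Sigma\geq 0$ on $X_*(\overline T)_{\mathbb R}$, giving $H_v\geq 1$ on all of $T(K_v)$. Hence for $\Re s\geq\epsilon$ the integrand is dominated by $H_v^{-\epsilon}$, so it suffices to show that $\int H_v^{-\epsilon}\,d\mu_v$ is finite; the resulting bound is then uniform in $s$ on the half-plane $\Re s\geq\epsilon$.

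Since $H_v$ factors through $\deg_{T,E,v}$, it is $T(\mathcal O_v)$-invariant. For non-archimedean $v$, part (i) of Lemma~\ref{lplem} identifies $T(K_v)/T(\mathcal O_v)$ with an open finite-index sublattice $\Lambda_v$ of $X_*(T_v)$, reducing the integral to
\[
\mu_v(T(\mathcal O_v))\sum_{x\in\Lambda_v}q_v^{-\epsilon\varphi_\Sigma(x)}.
\]
For archimedean $v$, part (ii) of Lemma~\ref{lplem} together with the convention $\log q_v=1$ for $v\mid\infty$ reduces the integral to
\[
\mu_v(T(\mathcal O_v))\int_{X_*(T_v)_{\mathbb R}}e^{-\epsilon\varphi_\Sigma(x)}\,dx.
\]

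Convergence of both expressions follows from a linear lower bound $\varphi_\Sigma(x)\geq c\|x\|$ on $X_*(T_v)_{\mathbb R}$ for some $c>0$ and any norm. In the archimedean case this yields exponential decay of the integrand and hence absolute integrability; in the non-archimedean case it dominates the lattice sum by $\sum_n P(n)\,q_v^{-\epsilon c n}$ for some polynomial $P$ counting $\Lambda_v$-points in balls, which converges. The only mild technical point is verifying this lower bound on the $G_v$-invariant subspace $X_*(T_v)_{\mathbb R}$; this follows by compactness of the unit sphere in $X_*(T_v)_{\mathbb R}$ combined with positivity of $\varphi_\Sigma$ off the origin, the latter being immediate from the piecewise-linear definition together with $\varphi_\Sigma(e_i)=1$ for each $i$.
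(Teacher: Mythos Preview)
Your proof is correct and follows the same opening reduction as the paper: bound by $|\chi_v|=1$, $\phi_{m,v}\le 1$, and $H_v\ge 1$ to reduce to showing $\widehat H_v(1,1;-\epsilon)<\infty$, then use $T(\mathcal O_v)$-invariance and Lemma~\ref{lplem} to pass to a sum or integral over the cocharacter lattice or its real span.

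Where you diverge is in establishing convergence. The paper dispatches the archimedean case by citing \cite[Prop.~2.3.2]{RPBH}, and for non-archimedean $v$ it invokes \cite[Thm.~2.2.6]{RPBH} together with Proposition~\ref{fanprop} to compute the lattice sum exactly as the finite product $(1-q_v^{-d\epsilon})\prod_{w\mid v}(1-q_w^{-\epsilon})^{-1}$. You instead run a uniform argument based on the lower bound $\varphi_\Sigma(x)\ge c\|x\|$, obtained from positive homogeneity and compactness of the unit sphere, to dominate the integral by an exponentially decaying integrand and the lattice sum by a polynomial-times-geometric series. Your route is more self-contained and works verbatim for any complete fan with strictly positive support function, so it avoids the external references and the specific combinatorics of Proposition~\ref{fanprop}; the paper's route, by contrast, produces an explicit closed form that is reused later (e.g.\ in Lemma~\ref{lftlem}) when computing local Fourier transforms precisely.
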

\begin{proof}
Let $\Re s \geq \epsilon$. Since
$$
|\widehat{H}_v\left(\phi_{m,v},\chi_v;-s\right)| \leq \int_{T\left(K_v\right)}\left|\frac{\phi_{m,v}\left(t_v\right)\chi_v\left(t_v\right)}{H_v\left(t_v\right)^s}\right|d\mu_v \leq \widehat{H}_v\left(1,1;-\epsilon\right),
$$
it suffices to prove that $\widehat{H}_v\left(1,1;-\epsilon\right)$ is convergent. For $v \mid \infty$, this follows from \cite[Prop.~2.3.2,~p.~614]{RPBH}, so assume that $v \nmid \infty$. The following argument is essentially the one in \cite[Rem.~2.2.8,~p.~613]{RPBH}, but we fill in the details for the sake of clarity.

Since $H_v$ and $d\mu_v$ are $T\left(\mathcal{O}_v\right)$-invariant and $\int_{T\left(\mathcal{O}_v\right)}d\mu_v = 1$, we have
\begin{equation*}
\begin{aligned}
\widehat{H}_v\left(1,1;-\epsilon\right) = \int_{T\left(K_v\right)}\frac{1}{H_v\left(t_v\right)^\epsilon}d\mu_v = \sum_{\overline{t_v} \in T\left(K_v\right)/T\left(\mathcal{O}_v\right)}\frac{1}{H_v\left(\overline{t_v}\right)^{\epsilon}}.
\end{aligned}
\end{equation*}
Now, by Lemma \hyperref[lplem1]{\ref*{lplem}\ref*{lplem1}}, $T\left(K_v\right)/T\left(\mathcal{O}_v\right)$ can be identified with a sublattice of finite index in $X_*\left(T_v\right)$, and this sublattice coincides with $X_*\left(T_v\right)$ when $v$ is unramified in $L/K$. Then we see that, interpreting $H_v$ as a function on $X_*\left(T_v\right)$, we have
$$
\sum_{\overline{t_v} \in T\left(K_v\right)/T\left(\mathcal{O}_v\right)}\frac{1}{H_v\left(\overline{t_v}\right)^{\epsilon}} \leq \sum_{n_v \in X_*\left(T_v\right)}\frac{1}{H_v\left(n_v\right)^{\epsilon}},
$$
and the proof of \cite[Thm.~2.2.6,~p.~611]{RPBH} and Proposition \ref{fanprop} give
$$
\sum_{n_v \in X_*\left(T_v\right)}\frac{1}{H_v\left(n_v\right)^{\epsilon}} = \left(1-\frac{1}{q_v^{d\epsilon}}\right)\prod_{w \mid v}\left(1-\frac{1}{q_w^{\epsilon}}\right)^{-1},
$$
so we deduce that $\widehat{H}_v\left(1,1;-\epsilon\right)$ is convergent, and this concludes the proof.
\end{proof}
\begin{lemma} \label{nontrivlem}
For any $v \in \Val\left(K\right)$, the local Fourier transform $\widehat{H}_v\left(\phi_{m,v},1;-s\right)$ is non-zero for all $s \in \mathbb{R}_{> 0}$.
\end{lemma}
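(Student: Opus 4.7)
The plan is to observe that for $s \in \mathbb{R}_{>0}$ and the trivial character, the integrand $\phi_{m,v}\left(t_v\right) H_v\left(t_v\right)^{-s}$ appearing in $\widehat{H}_v\left(\phi_{m,v},1;-s\right)$ is real and non-negative: the function $\phi_{m,v}$ is $\{0,1\}$-valued and $H_v^{-s}$ is strictly positive real. By Lemma \ref{aclem} the integral converges absolutely, so its value is a genuine non-negative real number. My strategy is then to exhibit an open set of positive $\mu_v$-measure on which the integrand is bounded below by a strictly positive constant.

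First I would check that the identity element $1_T \in T\left(K_v\right)$ lies in this ``good'' region. Since $\deg_{T,E,v}\left(1_T\right) = 0$ and $\varphi_{\Sigma}\left(0\right) = 0$, we have $H_v\left(1_T\right) = 1$; likewise $\phi_{m,v}\left(1_T\right) = 1$ (from the convention $\phi_{m,v} \equiv 1$ for $v \in S\left(\om\right)$, and from $H'_v\left(1_T\right) = 1$ otherwise). By continuity of both $H_v$ and $\phi_{m,v}$ (Remark \ref{ctsrem}), there is an open neighborhood $U$ of $1_T$ on which $\phi_{m,v} \equiv 1$ and $H_v \leq 2$, so the integrand is at least $2^{-s}$ on $U$. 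Since non-empty open subsets of a locally compact Hausdorff group carry positive Haar measure, $\mu_v\left(U\right) > 0$, and therefore
$$
\widehat{H}_v\left(\phi_{m,v},1;-s\right) \geq \int_U \phi_{m,v}\left(t_v\right) H_v\left(t_v\right)^{-s} d\mu_v \geq 2^{-s}\mu_v\left(U\right) > 0.
$$

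I do not anticipate any real obstacle here; the entire content is the non-negativity of the integrand for real positive $s$, combined with strict positivity near the identity. For non-archimedean $v$ the argument can be streamlined using the subgroup $\mathcal{K}_v$ of Definition \ref{kudef}: Lemma \ref{mcslem} (or, when $v \in S\left(\om\right)$, the triviality of $H_v$ on $T\left(\mathcal{O}_v\right)$ together with $\phi_{m,v} \equiv 1$) gives $\phi_{m,v} \equiv H_v \equiv 1$ on the compact open subgroup $\mathcal{K}_v$, yielding the cleaner bound $\widehat{H}_v\left(\phi_{m,v},1;-s\right) \geq \mu_v\left(\mathcal{K}_v\right) > 0$.
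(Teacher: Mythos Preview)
Your proposal is correct and is precisely the standard positivity argument that the paper invokes by reference to \cite[Lem.~5.1,~p.~2575]{NVF}: for real $s>0$ the integrand is non-negative and strictly positive on a neighbourhood of the identity, hence the integral is strictly positive. One small caveat: your streamlined variant using $\mathcal{K}_v$ and Lemma~\ref{mcslem} tacitly relies on the Galois hypothesis (Section~\ref{section:IS} assumes $L=E$), whereas Lemma~\ref{nontrivlem} is stated before that restriction is imposed; your main continuity argument, however, does not require it and suffices in full generality.
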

\begin{proof}
The proof is analogous to the proof of \cite[Lem.~5.1,~p.~2575]{NVF}.
\end{proof}
\begin{lemma}
Let $L = E$ be a Galois extension of $K$. For any place $v \in \Val\left(K\right)$, let $\chi_v$ be a character of $T\left(K_v\right)$ which is non-trivial on $\mathcal{K}_v$. Then
$$
\widehat{H}_v \left(\phi_{m,v},\chi_v;-s\right) = 0.
$$
\end{lemma}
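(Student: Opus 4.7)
The plan is to exploit the $\mathcal{K}_v$-invariance established in Lemma \ref{mcslem} via a standard averaging/translation argument: if a character is non-trivial on a subgroup that preserves both the integrand's data, then integrating forces the Fourier transform to vanish.

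More precisely, I would first observe that by Lemma \ref{aclem}, the integral defining $\widehat{H}_v(\phi_{m,v},\chi_v;-s)$ converges absolutely on a right half-plane, so change of variables is justified. Pick any $k \in \mathcal{K}_v$. Since $\mu_v$ is a Haar measure on $T(K_v)$, the substitution $t_v \mapsto k \cdot t_v$ preserves $d\mu_v$, giving
$$
\widehat{H}_v\left(\phi_{m,v},\chi_v;-s\right) = \int_{T\left(K_v\right)}\frac{\phi_{m,v}\left(k \cdot t_v\right)\chi_v\left(k \cdot t_v\right)}{H_v\left(k \cdot t_v\right)^s}\,d\mu_v.
$$
By Lemma \ref{mcslem}, both $H_v$ and $\phi_{m,v}$ are $\mathcal{K}_v$-invariant, so $H_v(k \cdot t_v) = H_v(t_v)$ and $\phi_{m,v}(k \cdot t_v) = \phi_{m,v}(t_v)$. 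Using multiplicativity of $\chi_v$, the right-hand side becomes $\chi_v(k) \widehat{H}_v(\phi_{m,v},\chi_v;-s)$, hence
$$
\left(1 - \chi_v(k)\right)\widehat{H}_v\left(\phi_{m,v},\chi_v;-s\right) = 0.
$$

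Since $\chi_v$ is non-trivial on $\mathcal{K}_v$ by hypothesis, there exists some $k \in \mathcal{K}_v$ with $\chi_v(k) \neq 1$, and the conclusion $\widehat{H}_v(\phi_{m,v},\chi_v;-s) = 0$ follows. There is no real obstacle here; all the genuine content has been packed into Lemma \ref{mcslem}, which required the Galois hypothesis and the careful choice of $S(\om)$ so that $H_v'$ (hence $\phi_{m,v}$) is controlled by the subgroup $\mathcal{K}_v$ on which it is trivial.
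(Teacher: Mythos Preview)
Your proof is correct and is essentially the same as the paper's, which simply states that the result follows from the $\mathcal{K}_v$-invariance of $\phi_{m,v}$ and $H_v$ ``by character orthogonality.'' You have spelled out explicitly the translation argument underlying that phrase, which is exactly what is meant.
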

\begin{proof}
Since $\phi_{m,v}$ and $H_v$ are $\mathcal{K}_v$-invariant, the result follows by character orthogonality.
\end{proof}
\begin{corollary} \label{gfttrivcor}
Let $L = E$ be a Galois extension of $K$, and let $\chi$ be an automorphic character of $T$. If $\chi \not \in \mathcal{U}$ for $\mathcal{U}$ as in Definition \ref{kudef}, then
$$
\widehat{H}\left(\phi_m,\chi;-s\right) = 0.
$$
\end{corollary}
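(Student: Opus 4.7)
The plan is essentially immediate from the preceding lemma together with the factorisation of the global Fourier transform into local factors. Since $\chi \notin \mathcal{U}$ means by definition of $\mathcal{U}$ (Definition \ref{kudef}) that $\chi$ is not trivial on $\mathcal{K} = \prod_{v} \mathcal{K}_v$, I would first produce a place $v_0 \in \Val(K)$ at which $\chi_{v_0}$ fails to be trivial on $\mathcal{K}_{v_0}$. For $v \in S(\om)$ this would require $\chi_v$ to be non-trivial on $T(\mathcal{O}_v)$, and for $v \notin S(\om)$ non-trivial on the open finite-index subgroup $\mathcal{K}_v \subset T(\mathcal{O}_v)$ from Lemma \ref{mcslem}; either way, such a $v_0$ exists because a character of the product $\mathcal{K}$ that is trivial on every $\mathcal{K}_v$ would be trivial on $\mathcal{K}$.

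Next, I would invoke the definition of the global Fourier transform, which gives
\[
\widehat{H}(\phi_m,\chi;-s) = \prod_{v \in \Val(K)} \widehat{H}_v(\phi_{m,v},\chi_v;-s),
\]
noting that this Euler product is meaningful in our setting thanks to the absolute convergence of each local factor (Lemma \ref{aclem}) and because the factors at places outside $S'(\om)$ are handled by the standard local computation (where $\mathcal{K}_v = T(\mathcal{O}_v)$). By the unnamed lemma immediately preceding the corollary, the factor at $v_0$ vanishes: $\widehat{H}_{v_0}(\phi_{m,v_0},\chi_{v_0};-s) = 0$. Since this single factor is $0$, the global product vanishes, giving $\widehat{H}(\phi_m,\chi;-s) = 0$.

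There is essentially no obstacle here; this is a formal consequence of the factorisation and character orthogonality applied locally. The only point that warrants care is ensuring that a product in which one factor is $0$ and the others are finite is genuinely $0$ as an analytic identity (rather than as an ill-defined expression), but this is already addressed by Lemma \ref{aclem}, which guarantees that each local factor is finite on $\Re s \geq \epsilon$ for any $\epsilon > 0$, so there is no cancellation with infinities to worry about.
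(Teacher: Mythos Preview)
Your argument is correct and is precisely the intended derivation: the paper states this as a corollary with no separate proof, since it follows immediately from the preceding lemma by picking a place $v_0$ where $\chi_{v_0}|_{\mathcal{K}_{v_0}} \neq 1$ and using the Euler product definition of $\widehat{H}(\phi_m,\chi;-s)$. Your remark about finiteness of the remaining local factors via Lemma~\ref{aclem} is a sensible bit of extra care.
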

\begin{lemma} \label{lftlem}
Let $v \nmid \infty$ be a non-archimedean place of $K$ unramified in $L/K$, and let $\chi$ be an automorphic character of $T$ which is unramified at $v$. Then we have
\begin{equation*}
\widehat{H}_v\left(1,\chi_v;-s\right) = \left(1-\frac{1}{q_v^{ds}}\right)\prod_{w \mid v}\left(1-\frac{\chi_w\left(\pi_w\right)}{q_w^s}\right)^{-1} = L_v \left(\chi,s\right)\zeta_{K,v}\left(ds\right)^{-1}.
\end{equation*}
\end{lemma}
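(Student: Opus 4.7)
The plan is to reduce the Fourier integral to a lattice sum and evaluate that sum by a Galois-orbit decomposition of the fan $\Sigma$. Since $\chi$ is unramified at $v$, $\chi_v$ is trivial on $T\left(\mathcal{O}_v\right)$. Combined with the $T\left(\mathcal{O}_v\right)$-invariance of $H_v$ (Lemma \ref{mcslem}) and the normalisation $\int_{T\left(\mathcal{O}_v\right)} d\mu_v = 1$ used in the proof of Lemma \ref{aclem}, this yields
\begin{equation*}
\widehat{H}_v\left(1,\chi_v;-s\right) = \sum_{\overline{t_v} \in T\left(K_v\right)/T\left(\mathcal{O}_v\right)} \frac{\chi_v\left(\overline{t_v}\right)}{H_v\left(\overline{t_v}\right)^s}.
\end{equation*}
Since $v$ is unramified in $L/K$, and hence in its Galois closure $E/K$, Lemma \hyperref[lplem1]{\ref*{lplem}\ref*{lplem1}} identifies $T\left(K_v\right)/T\left(\mathcal{O}_v\right)$ with $X_*\left(T_v\right) = X_*\left(\overline{T}\right)^{G_v}$ via $\deg_{T,E,v}$.

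Next, I would decompose $X_*\left(T_v\right)$ according to the $G_v$-invariant cones of $\Sigma$. By the proof of Proposition \ref{fanprop}, these cones correspond bijectively to proper subfamilies $I \subsetneq \{w : w \mid v\}$ of the $G_v$-orbits of $\Sigma\left(1\right)$, and every element of $X_*\left(T_v\right)$ lies in the relative interior of a unique such cone. For the cone associated to $I$, the relative-interior integer points are parametrised bijectively by $\left(a_w\right)_{w \in I} \in \mathbb{Z}_{\geq 1}^{|I|}$ via $\left(a_w\right) \mapsto \sum_{w \in I} a_w n_w$, where $n_w$ is the sum of rays in the orbit $\Sigma_w\left(1\right)$; injectivity is the uniqueness analysis in the proof of Proposition \ref{cocharprop}. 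Since $\varphi_{\Sigma}\left(e_i\right) = 1$ for each ray, we have $\varphi_\Sigma\left(n_w\right) = d_w$, the inertia degree of $w/v$, so $H_v\left(n_w\right) = q_v^{d_w} = q_w$. Under the isomorphism $T\left(K_v\right) \cong \left(\prod_{w \mid v} L_w^*\right)/K_v^*$ of Note \ref{torhecnote} combined with Proposition \ref{cocharprop} (for $e_v = 1$), the class of a uniformiser $\pi_w$ of $L_w$ has $\deg_{T,E,v}$-image $n_w$, so $\chi_v\left(n_w\right) = \chi_w\left(\pi_w\right)$, independent of the choice of $\pi_w$ because $\chi$ is unramified at $w$.

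Setting $u_w = \chi_w\left(\pi_w\right) q_w^{-s}$ and summing the geometric series on each cone, I obtain
\begin{equation*}
\widehat{H}_v\left(1,\chi_v;-s\right) = \sum_{I \subsetneq \{w : w \mid v\}} \prod_{w \in I} \frac{u_w}{1 - u_w} = \prod_{w \mid v}\frac{1}{1-u_w} - \prod_{w \mid v}\frac{u_w}{1-u_w} = \frac{1 - \prod_{w \mid v} u_w}{\prod_{w \mid v}\left(1 - u_w\right)},
\end{equation*}
where the middle identity is an inclusion--exclusion for subsets of $\{w : w \mid v\}$ with the full subset subtracted off. By Note \ref{torhecnote}, $\prod_{w \mid v}\chi_w\left(\pi_w\right) = 1$, and $\prod_{w \mid v} q_w = q_v^d$ since $v$ is unramified in $L/K$, so $\prod_{w \mid v} u_w = q_v^{-ds}$, yielding the claimed formula upon recognising $L_v\left(\chi,s\right) = \prod_{w \mid v}\left(1 - \chi_w\left(\pi_w\right)q_w^{-s}\right)^{-1}$ and $\zeta_{K,v}\left(ds\right)^{-1} = 1 - q_v^{-ds}$.

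The hard part will be the evaluation $\chi_v\left(n_w\right) = \chi_w\left(\pi_w\right)$: the cocharacter $n_w$ lives in the quotient $X_*\left(\overline{T}\right)/\left\langle \sum_i e_i\right\rangle$, so this assignment is only internally consistent modulo the global relation $\prod_{w \mid v}\chi_w\left(\pi_w\right) = 1$ supplied by unramifiedness and automorphy via Note \ref{torhecnote}. It is precisely this compatibility that collapses the numerator to $1 - q_v^{-ds}$.
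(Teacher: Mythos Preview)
Your argument is correct and is essentially the same approach as the paper's one-line proof, which invokes \cite[Thm.~2.2.6,~p.~611]{RPBH} together with Proposition~\ref{fanprop}; you have simply unpacked the content of that citation, namely the reduction to a lattice sum over $X_*\left(T_v\right)$, the cone-by-cone geometric-series evaluation, and the identification of $Q_{\Sigma,v}$ with $1 - \prod_{w \mid v} u_w$. The only cosmetic difference is that you write $u_w = \chi_w\left(\pi_w\right)q_w^{-s}$ directly, whereas the $Q_{\Sigma,v}$ formalism uses $u_i^{d_i}$ for the same quantity.
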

\begin{proof}
The result follows from \cite[Thm.~2.2.6,~p.~611]{RPBH} and Proposition \ref{fanprop}.
\end{proof}
\begin{mydef}
Given a vector $u = \left(u_1,\dots,u_r\right) \in \mathbb{N}^r$, define $f_{r,n,u}\left(x_1\dots,x_r\right)$ to be the sum of all degree-$n$ monomials in $x_1,\dots,x_r$ weighted via $u$, i.e.\
$$
f_{r,n,u}\left(x_1,\dots,x_r\right) = \sum_{\substack{\sum_{i=1}^r u_i a_i = n \\ \forall i \, a_i \in \mathbb{Z}_{\geq 0}}} x_1^{a_1}\dots x_r^{a_r}.
$$
Set
$$
f_{r,n}\left(x_1,\dots,x_r\right) = f_{r,n,\left(1,\dots,1\right)}\left(x_1,\dots,x_r\right).
$$
\end{mydef}
\begin{proposition} \label{truncprop}
Let $v \not\in S'\left(\om\right)$ be a non-archimedean place of $K$, and let $\chi \in \mathcal{U}$. Let $w_1,\dots,w_r \in \Val \left(L\right)$ be the places of $L$ over $v$. Let $u_i$ be the inertia degree of $w_i$ over $v$ for each $i=1,\dots,r$. Set
$$
c_{\chi,v,n} = f_{r,n,u}\left(\chi_{w_1}\left(\pi_{w_1}\right),\dots,\chi_{w_r}\left(\pi_{w_r}\right)\right).
$$
Then, for $\Re s > 0$, we have
$$
\widehat{H}_v\left(\phi_{m,v};\chi_v;-s\right) = 1 + \sum_{n=m}^{\infty} \frac{c_{\chi,v,n} - c_{\chi,v,n-d}}{q_v^{ns}}.
$$
\end{proposition}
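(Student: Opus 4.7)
The plan is to convert $\widehat{H}_v(\phi_{m,v},\chi_v;-s)$ into a Dirichlet series in $q_v^{-s}$ whose $n$th coefficient is a weighted lattice count over $\{\sigma \in X_*(T_v) : \varphi_\Sigma(\sigma) = n\}$, and then to read off these coefficients by matching against the closed-form expression for $\widehat{H}_v(1,\chi_v;-s)$ from Lemma \ref{lftlem}.

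First I would reduce each Fourier integral to a lattice sum. Since $v \notin S'(\om)$, Lemma \ref{mcslem} gives $\mathcal{K}_v = T(\mathcal{O}_v)$, so $H_v$, $\phi_{m,v}$ and (using $\chi \in \mathcal{U}$) $\chi_v$ are all $T(\mathcal{O}_v)$-invariant. Since $v$ is unramified in $E/K$, Lemma \ref{lplem} makes $\deg_{T,E,v}$ induce an isomorphism $T(K_v)/T(\mathcal{O}_v) \xrightarrow{\sim} X_*(T_v)$, and $H_v$ descends to $\sigma \mapsto q_v^{\varphi_\Sigma(\sigma)}$ by Definition \ref{bthtdef}. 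Writing $V = \mu_v(T(\mathcal{O}_v))$ and integrating coset by coset then yields, uniformly for $f_v \in \{1,\phi_{m,v}\}$,
$$\widehat{H}_v(f_v,\chi_v;-s) = V \sum_{\sigma \in X_*(T_v)} f_v(\sigma)\chi_v(\sigma)\, q_v^{-s\varphi_\Sigma(\sigma)}.$$

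Next I would identify the effect of $\phi_{m,v}$ on this lattice. As $H_v' = H_v$ away from $S'(\om)$, Definition \ref{inddef} shows $\phi_{m,v}(\sigma) = 1$ iff $\varphi_\Sigma(\sigma) = 0$ or $\varphi_\Sigma(\sigma) \geq m$. Grouping the sums by $n = \varphi_\Sigma(\sigma)$ and setting $a_n = \sum_{\varphi_\Sigma(\sigma)=n} \chi_v(\sigma)$, I obtain
$$\widehat{H}_v(\phi_{m,v},\chi_v;-s) = V\Bigl(a_0 + \sum_{n \geq m} a_n q_v^{-ns}\Bigr), \qquad \widehat{H}_v(1,\chi_v;-s) = V \sum_{n \geq 0} a_n q_v^{-ns}.$$

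Finally I would pin down the $a_n$ by formally expanding the product in Lemma \ref{lftlem}. Using $q_w = q_v^{u_w}$ and unfolding each geometric factor gives
$$\prod_{w \mid v}\bigl(1 - \chi_w(\pi_w)q_w^{-s}\bigr)^{-1} = \sum_{n \geq 0} c_{\chi,v,n}\, q_v^{-ns},$$
so multiplying by $1 - q_v^{-ds}$ and comparing coefficients with the second displayed equation above yields $Va_n = c_{\chi,v,n} - c_{\chi,v,n-d}$ (with $c_{\chi,v,k} = 0$ for $k<0$). In particular $Va_0 = 1$, and substitution into the first displayed equation gives the claimed formula. The one point requiring care — rather than a genuine obstacle — is ensuring that the same normalising constant $V$ appears in both lattice sums, so that it cancels when one passes from the coefficients of $\widehat{H}_v(1,\chi_v;-s)$ to those of $\widehat{H}_v(\phi_{m,v},\chi_v;-s)$; this is automatic once one observes that both integrals unfold via the same coset decomposition, making the explicit value of $V$ irrelevant.
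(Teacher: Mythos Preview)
Your proposal is correct and follows essentially the same route as the paper: both arguments expand $\widehat{H}_v(1,\chi_v;-s)$ via Lemma \ref{lftlem} as a series in $q_v^{-s}$ with coefficients $c_{\chi,v,n}-c_{\chi,v,n-d}$, identify these coefficients with the level-set integrals $\int_{H_v=q_v^n}\chi_v\,d\mu_v$, and then truncate using the description of $\phi_{m,v}$ in terms of $H_v$. The only cosmetic difference is that you pass explicitly through the lattice $X_*(T_v)$ and carry the volume factor $V=\mu_v(T(\mathcal{O}_v))$, whereas the paper simply quotes $\int_{T(\mathcal{O}_v)}d\mu_v=1$ (cf.\ the proof of Lemma \ref{aclem}) and works directly with the integrals.
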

\begin{proof}
Let $s \in \mathbb{C}$ with $\Re s > 0$. As $\chi \in \mathcal{U}$ and $v \not\in S'\left(\om\right)$, it follows that $\chi$ is unramified at $v$. Then, expanding geometric series, we have
\begin{equation*}
L_v\left(\chi,s\right) = \prod_{i=1}^r\left(1-\frac{\chi_{w_i}\left(\pi_{w_i}\right)}{q_v^{u_i s}}\right)^{-1} = 1+ \sum_{n=1}^{\infty}\frac{c_{\chi,v,n}}{q_v^{ns}},
\end{equation*}
so, by Lemma \ref{lftlem}, we obtain
$$
\widehat{H}_v\left(1,\chi_v;-s\right) = 1 + \sum_{n=1}^{\infty} \frac{c_{\chi,v,n} - c_{\chi,v,n-d}}{q_v^{ns}}.
$$
On the other hand, we may write
$$
\widehat{H}_v\left(1,\chi_v;-s\right) = \int_{T\left(K_v\right)}\frac{\chi_v\left(t_v\right)}{H_v\left(t_v\right)^s}d\mu_v = \sum_{n=0}^{\infty} \frac{1}{q_v^{ns}}\int_{H_v\left(t_v\right) = q_v^n}\chi_v\left(t_v\right)d\mu_v,
$$
so, comparing these expressions, we see for $n \geq 1$ that
$$
c_{\chi,v,n} - c_{\chi,v,n-d} = \int_{H_v\left(t_v\right) = q_v^n}\chi_v\left(t_v\right)d\mu_v.
$$
Since $v \not\in S'\left(\om\right)$, we have $\phi_{m,v}\left(t_v\right) = 1$ if and only if $H_v\left(t_v\right) = 1$ or $H_v\left(t_v\right) \geq q_v^m$, so the result follows.
\end{proof}
\subsection{Regularisation}
Now that we have expressions for the local Fourier transforms at all but finitely many places, our goal is to find ``regularisations'' for the global Fourier transforms, i.e.\ functions expressible as Euler products whose convergence is well-understood and whose local factors approximate the local Fourier transforms well (as expansions in $q_v$) at all but finitely many places. As in \cite{RPBH}, \cite{NVF} and \cite{CPBH}, we will construct our regularisations from $L$-functions.

\begin{proposition} \label{partprop}
Let $G$ be a subgroup of $S_d$ acting freely and transitively on $\{1,\dots,d\}$, and let $m \geq 2$ be a positive integer. Let $S_m$ act upon $G^m$ by permutation of coordinates, and let $G$ act on $G^m/S_m$ by right multiplication of every element of a representative $m$-tuple. Set $S\left(G,m\right) = \left(G^m/S_m\right)/G$.
\begin{enumerate}[label=(\roman*)]

\item If $\overline{\left(g_1,\dots,g_m\right)} = \overline{\left(h_1,\dots,h_m\right)}$ in $S\left(G,m\right)$, then
\begin{equation} \label{sumeq}
\sum_{i=1}^d x_{g_1\left(i\right)}\dotsm x_{g_m\left(i\right)} = \sum_{i=1}^d x_{h_1\left(i\right)}\dotsm x_{h_m\left(i\right)},
\end{equation}
hence, for $\overline{\left(g_1,\dots,g_m\right)} \in S\left(G,m\right)$, we may define the sum
\[
\phi_{\overline{\left(g_1,\dots,g_m\right)}}\left(x_1,\dots,x_d\right) = \sum_{i=1}^d x_{g_1\left(i\right)}\dotsm x_{g_m\left(i\right)}.
\]

\item If $\overline{\left(g_1,\dots,g_m\right)} \neq \overline{\left(h_1,\dots,h_m\right)}$, then the sums $\phi_{\overline{\left(g_1,\dots,g_m\right)}}\left(x_1,\dots,x_d\right)$ and $\phi_{\overline{\left(h_1,\dots,h_m\right)}}\left(x_1,\dots,x_d\right)$ share no summands. Further, a monomial appears twice in $\phi_{\overline{\left(g_1,\dots,g_m\right)}}\left(x_1,\dots,x_d\right)$ if and only if $\left(g_1,\dots,g_m\right) = \left(g_1 g_r,\dots,g_m g_r\right)$ for some $r \in \{1,\dots,m\}$ with $g_r \neq 1_G$.

\item If $d$ is coprime to $m$, then we have
\begin{equation*}
f_{d,m}\left(x_1,\dots,x_d\right) = \sum_{\overline{\left(g_1,\dots,g_m\right)} \in S\left(G,m\right)} \phi_{\left(g_1,\dots,g_m\right)}\left(x_1,\dots,x_d\right).
\end{equation*}

\item If $d$ is prime and $m = kd$, then we have
\begin{equation*}
f_{d,m}\left(x_1,\dots,x_d\right) + \left(d-1\right)x_1^k \dotsm x_d^k = \sum_{\overline{\left(g_1,\dots,g_m\right)} \in S\left(G,m\right)} \phi_{\left(g_1,\dots,g_m\right)}\left(x_1,\dots,x_d\right).
\end{equation*}
\end{enumerate}
\end{proposition}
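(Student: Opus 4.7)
The plan is to handle the four parts in sequence, using orbit-stabilizer for the right $G$-action on $G^m/S_m$ as the main engine.

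For (i), invariance under $S_m$-permutation of the tuple $(g_1, \ldots, g_m)$ is immediate, since each summand $x_{g_1(i)} \cdots x_{g_m(i)}$ is an unordered product of commuting variables. Invariance under the right $G$-action $(g_1, \ldots, g_m) \mapsto (g_1 g, \ldots, g_m g)$ follows by the change of index $j = g(i)$: as $i$ ranges over $\{1, \ldots, d\}$, so does $j$, leaving the sum unchanged.

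For (ii), the key observation exploits freeness and transitivity: the multisets $\{g_1(i), \ldots, g_m(i)\}$ and $\{h_1(j), \ldots, h_m(j)\}$ coincide in $\{1,\ldots,d\}$ if and only if, letting $g \in G$ be the unique element with $g(i) = j$, the multisets $\{g_1, \ldots, g_m\}$ and $\{h_1 g, \ldots, h_m g\}$ agree in $G$. This yields the disjointness statement at once, since a shared monomial between two $\phi$'s would force equality of the two classes in $S(G, m) = (G^m/S_m)/G$. The characterization of repeated monomials inside a single $\phi$ is the same statement applied to one class, identifying repeats with non-trivial elements of the right-$G$-stabilizer of $\overline{(g_1, \ldots, g_m)} \in G^m/S_m$.

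For (iii) and (iv), I would invoke orbit-stabilizer on $G^m/S_m$. If $H \leq G$ stabilizes some $\overline{(g_1, \ldots, g_m)}$, then the multiset $\{g_1, \ldots, g_m\}$ decomposes with multiplicity as a union of right $H$-cosets, so $|H| \mid m$; also $|H| \mid |G| = d$, hence $|H| \mid \gcd(d, m)$. In (iii), coprimality forces every stabilizer to be trivial, every $G$-orbit on $G^m/S_m$ to have size $d$, and $|S(G, m)| = \binom{d+m-1}{m}/d$; combined with (ii), each of the $\binom{d+m-1}{m}$ degree-$m$ monomials appears exactly once across all the $\phi$'s, matching $f_{d, m}$. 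For (iv), with $d$ prime and $m = kd$, stabilizers are either trivial or all of $G$; the case $H = G$ occurs uniquely for the multiset of $k$ copies of each element of $G$, whose $\phi$ equals $d \cdot x_1^k \cdots x_d^k$. Adding this to the trivial-stabilizer contribution yields $f_{d, m} + (d-1) x_1^k \cdots x_d^k$.

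The main difficulty will be the bookkeeping in (ii), in particular matching the stabilizer characterization to the stated form involving a specific $g_r$. This is vacuous in (iii), since stabilizers are trivial there, and is transparent in (iv), where $H = G$ forces every element of $G$ (hence every $g_r$) to appear as a stabilizer. Thus (ii) is exactly strong enough to drive the counting arguments behind (iii) and (iv).
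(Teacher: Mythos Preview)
Your proposal is correct and follows essentially the same route as the paper: both arguments identify shared or repeated monomials with non-trivial right-$G$-stabilizers of the multiset $\{g_1,\ldots,g_m\}$, then exploit the divisibility $|H|\mid\gcd(d,m)$ to handle (iii) and (iv). The bookkeeping you flag in (ii)---why a non-trivial stabilizing element may be taken to be one of the $g_r$---is handled in the paper by normalizing $g_1=1_G$, after which any $g$ stabilizing the multiset satisfies $g=1_G\cdot g\in\{g_1,\ldots,g_m\}$, so $g=g_r$ for some $r$.
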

\begin{proof}

First, we prove (i). 
Note that $\overline{\left(g_1,\dots,g_m\right)} = \overline{\left(h_1,\dots,h_m\right)}$ if and only if $\{h_1,\dots,h_m\} = \{g_1g,\dots,g_m g\}$ as multisets for some $g \in G$.
If the coordinates of $\left(h_1,\dots,h_m\right) \in G^m$ are a permutation of those of $\left(g_1,\dots,g_m\right) \in G^m$, then
$$
x_{h_1\left(i\right)}\dotsm x_{h_m\left(i\right)} = x_{g_1\left(i\right)}\dotsm x_{g_m\left(i\right)}
$$
for any $i \in \{1,\dots,d\}$, while if $\left(h_1,\dots,h_m\right) = \left(g_1g,\dots,g_m g\right)$ for some $g \in G$, then for any $i \in \{1,\dots,d\}$, we have
$$
x_{g_1 g\left(i\right)}\dotsm x_{g_m g\left(i\right)} = x_{g_1\left(j\right)}\dotsm x_{g_m\left(j\right)}
$$
for the unique $j$ with $g\left(i\right) = j$. In either case, we obtain \eqref{sumeq}. The claim follows.

Next we prove (ii).
Note that for $\overline{\left(g_1,g_2,\dots,g_m\right)} \in S\left(G,m\right)$, we may take $g_1 = 1_G$ without loss of generality. Suppose that
$$
x_{i}x_{g_2\left(i\right)}\dotsm x_{g_m\left(i\right)} = x_{j} x_{h_2\left(j\right)}\dotsm x_{h_m\left(j\right)}
$$
for some $i,j \in \{1,\dots,d\}$. This is equivalent to the equality of multisets
$$
\{i,g_2\left(i\right),\dots,g_m\left(i\right)\} = \{j,h_2\left(j\right),\dots,h_m\left(j\right)\}.
$$
If $i=j$, we have
$$
\{g_2\left(i\right),\dots,g_m\left(i\right)\} = \{h_2\left(i\right),\dots,h_m\left(i\right)\},
$$
and by the freeness of the action of $G$ on $\{1,\dots, d\}$, we have $\left(g_2,\dots,g_m\right) = \left(h_2,\dots,h_m\right)$ up to reordering, i.e.\ the $m$-tuple $\left(1_G,h_2,\dots,h_m\right)$ is a permutation of $\left(1_G,g_2,\dots,g_m\right)$. If $i \neq j$, we may take $g_2\left(i\right) = j$ without loss of generality (note that $g_2 \neq 1_G$ in this case), and we obtain the equality of multisets
$$
\{j, h_2\left(j\right),\dots,h_m\left(j\right)\} = \{g_2\left(i\right), h_2 g_2 \left(i\right),\dots,h_m g_2\left(i\right)\}.
$$
Once again, by freeness of the action of $G$ on $\{1,\dots,d\}$, we get that
$$
\{1_G,g_2,\dots,g_m\} = \{g_2,h_2 g_2, \dots, h_m g_2\}
$$
as multisets, from which both claims follow.

Before proceeding to the proofs of (iii) and (iv), we make the following observation: if a multiset $M = \{g_1,\dots,g_m\}$ of elements of $G$ is closed under right multiplication by some $g \in G$ of order $n \geq 2$, then $n$ divides $m$. Indeed, $M$ must contain the $n$ distinct elements $g_1, g_1 g, g_1 g^2,\dots,g_1 g^{n-1}$. Without loss of generality, we may take $g_l = g_1 g^{l-1}$ for $l =1,\dots,n$. Since $\{g_1,\dots,g_n\} = \{g_1,g_1 g,\dots, g_1 g^{n-1}\}$ is clearly closed under right multiplication by $g$, so is the sub-multiset $\{g_{n+1},\dots,g_m\}$. Iterating this process, we deduce that $n$ divides the cardinality of $M$, as otherwise we would eventually obtain a non-empty sub-multiset closed under right multiplication by $g$ with fewer than $n$ elements, which is clearly impossible.

We now prove (iii) and (iv). By (i) and (ii), every degree-$m$ monomial in $x_1,\dots,x_d$ appears in a unique summand of $\sum_{\overline{\left(g_1,\dots,g_m\right)} \in S\left(G,m\right)} \phi_{\left(g_1,\dots,g_m\right)}\left(x_1,\dots,x_d\right)$, and a monomial appears twice in $\phi_{\overline{\left(g_1,\dots,g_m\right)}}\left(x_1,\dots,x_d\right)$ if and only if the multiset $\{g_1,\dots,g_m\}$ is closed under right multiplication by some $g_r \neq 1_G$. By the above, the latter is possible only if the order of $g_r$ divides $m$, while by Lagrange's theorem, the order of $g_r$ divides $d$. If $d$ is coprime to $m$, then we deduce that no such $g_r$ exists, thus we obtain (iii). If $d=p$ is prime and $m=kp$, then $g_r$ is necessarily a generator of the cyclic group $G$. It then follows as in the above observation that
\begin{equation*}
\overline{\left(g_1,\dots,g_{kp}\right)} = \overline{\left(1_G,g_r,\dots,g_r^{p-1},\dots,1_G,g_r,\dots,g_r^{p-1}\right)}.
\end{equation*}
Letting $g$ be a generator of $G$, we see that $\phi_{\overline{\left(1_G,g,\dots,g^{p-1},\dots,1_G,g,\dots,g^{p-1}\right)}}\left(x_1,\dots,x_p\right) = px_1^k \dotsm x_p^k$ is the only one of the polynomials $\phi_{\overline{\left(g_1,\dots,g_{kp}\right)}}\left(x_1,\dots,x_p\right)$, $\overline{\left(g_1,\dots,g_{kp}\right)} \in S\left(G,kp\right)$ in which a monomial appears more than once, and so we obtain (iv).
\end{proof}
\begin{remark} \label{sgmrem}
It follows immediately from Proposition \ref{partprop} that
$$
\#S\left(G,m\right) = 
\begin{cases}
\frac{1}{d}{{d+m-1}\choose{d-1}} \textrm{ if $d$ and $m$ are coprime}, \\
\frac{1}{d}\left({{d+m-1}\choose{d-1}} - 1\right) + 1 \textrm{ if $d$ is prime and $d$ divides $m$,}
\end{cases}
$$
since the number of monomials of degree $m$ in $d$ variables is ${{d+m-1}\choose{d-1}}$.
\end{remark}
\begin{remark}
As we shall shortly see, it is the ability to partition (or in the case where $d$ is a prime dividing $m$, nearly partition) the sum of all degree-$m$ monomials in $d$ variables as in Proposition \ref{partprop} that allows us to construct well-behaved regularisations.
\end{remark}
For the rest of this section, let $L = E$ be Galois over $K$ with Galois group $G$, and assume that $m$ is coprime to $d$ if $d$ is not prime.
\begin{lemma} \label{canclem}
Let $v \not\in S'\left(\om\right)$ be a non-archimedean place which is totally split in $E/K$ and let $\chi \in \mathcal{U}$. Then
$$
\frac{\prod_{\overline{\left(g_1,\dots,g_m\right)} \in S\left(G,m\right)} L_v\left(\chi^{g_1}\dotsm\chi^{g_m},ms\right)}{\prod_{\overline{\left(h_1,\dots,h_{m-d}\right)} \in S\left(G,m-d\right)} L_v\left(\chi^{h_1}\dotsm\chi^{h_{m-d}},ms\right)} = \prod_{\overline{\left(g_1,\dots,g_m\right)} \in S'\left(G,m\right)} L_v\left(\chi^{g_1}\dotsm\chi^{g_m},ms\right),
$$
where
$$
S'\left(G,m\right) = \left\{\overline{\left(g_1,\dots,g_m\right)} \in S\left(G,m\right) : \#\{g_1,\dots,g_m\} \leq d-1\right\}.
$$.
\end{lemma}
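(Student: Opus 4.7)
The plan is to first rearrange the claim into the equivalent identity
\[
\prod_{\overline{(g_1,\dots,g_m)} \in S(G,m) \setminus S'(G,m)} L_v(\chi^{g_1}\cdots\chi^{g_m}, ms) = \prod_{\overline{(h_1,\dots,h_{m-d})} \in S(G,m-d)} L_v(\chi^{h_1}\cdots\chi^{h_{m-d}}, ms),
\]
noting that $S(G,m) \setminus S'(G,m)$ is precisely the set of classes whose underlying multiset contains every element of $G$ at least once, and then to prove it by producing a bijection between the two indexing sets under which the corresponding Hecke characters agree. The total splitness of $v$ is not actually used in this argument.

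First I would construct the bijection $\Phi \colon S(G,m-d) \to S(G,m) \setminus S'(G,m)$ by fixing an enumeration $g^{(1)}, \dots, g^{(d)}$ of $G$ and sending $\overline{(h_1, \dots, h_{m-d})}$ to $\overline{(h_1, \dots, h_{m-d}, g^{(1)}, \dots, g^{(d)})}$. Independence of the representative under the $S_{m-d}$-action is transparent, while independence under the right $G$-action follows because right-multiplication by $g \in G$ merely permutes the appended block, which is then absorbed by an element of $S_m$. Surjectivity is immediate from the description of $S(G,m) \setminus S'(G,m)$, and injectivity reduces to the multiset cancellation law: if $M \sqcup G = M' \sqcup G$ as multisets (where $G$ denotes one copy of each of its elements), then $M = M'$. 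This combinatorial bookkeeping is the step I expect to demand the most care.

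Next I would establish the key global identity $\prod_{g \in G} \chi^g = 1$ as Hecke characters of $E$. Unravelling the definition of the twist yields $\prod_{g \in G} \chi^g(t) = \chi(N_{E/K}(t))$ for every $t \in \mathbb{A}_E^*$. Since $\chi \in \mathcal{U}$ corresponds via Note \ref{torhecnote} to an automorphic character of $T(\mathbb{A}_K) = \mathbb{A}_E^*/\mathbb{A}_K^*$, it is trivial on $\mathbb{A}_K^* \subset \mathbb{A}_E^*$; as $N_{E/K}(t) \in \mathbb{A}_K^*$, the claim follows.

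Combining these, the Hecke character attached to $\Phi(\overline{(h_1, \dots, h_{m-d})})$ simplifies to $\chi^{h_1}\cdots\chi^{h_{m-d}} \cdot \prod_{g \in G} \chi^g = \chi^{h_1}\cdots\chi^{h_{m-d}}$, so the local $L$-factors at $v$ on either side of $\Phi$ coincide. Taking the product over $S(G,m-d)$ now yields the reformulated identity, and hence the lemma after multiplying both sides by $\prod_{\overline{(g_1,\dots,g_m)} \in S'(G,m)} L_v(\chi^{g_1}\cdots\chi^{g_m},ms)$.
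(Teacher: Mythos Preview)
Your proof is correct and follows the same bijective strategy as the paper: both construct the map $S(G,m-d) \to S(G,m)\setminus S'(G,m)$ given by appending one copy of each element of $G$, check it is well-defined and injective (the paper's injectivity argument is exactly your multiset cancellation together with the observation that $\{gg_1,\dots,gg_d\}=G$), and then show that the associated $L$-factors coincide.

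The one genuine difference is in how you establish $L_v(\chi^{h_1}\cdots\chi^{h_{m-d}}\cdot\prod_{g\in G}\chi^g,ms)=L_v(\chi^{h_1}\cdots\chi^{h_{m-d}},ms)$. The paper argues locally: at a totally split $v$ the group $G$ acts simply transitively on the places $w\mid v$, so $(\chi^{g_1}\cdots\chi^{g_d})_w(\pi_w)=\prod_{w'\mid v}\chi_{w'}(\pi_{w'})=1$ by Note~\ref{torhecnote}, and this is precisely where total splitness enters. You instead invoke the global identity $\prod_{g\in G}\chi^g=\chi\circ N_{E/K}=1$, which holds because $\chi$ is trivial on $\mathbb{A}_K^*$; this is cleaner and, as you observe, does not use total splitness at all, so your argument in fact proves the lemma for every non-archimedean $v$ at which $\chi$ is unramified. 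The paper only needs the totally split case downstream (in Corollary~\ref{lftcor}), so nothing is lost either way, but your route is slightly more general.
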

\begin{proof}
Let $G=\{g_1,\dots,g_d\}$. First, we show that every factor of the denominator of the left-hand side appears on the numerator. Let $\overline{\left(h_1,\dots,h_{m-d}\right)} \in S\left(G,m-d\right)$. Then we claim that
$$
L_v\left(\chi^{h_1} \dotsm \chi^{h_{m-d}},ms\right) = L_v\left(\chi^{h_1} \dotsm \chi^{h_{m-d}} \chi^{g_1} \dotsm \chi^{g_d},ms\right).
$$
Since $G$ acts freely and transitively on the places $w_1,\dots,w_d$ of $E$ over $v$, we have
$$
\{\chi_{w_i}^{g_1}\left(\pi_{w_i}\right),\dots,\chi_{w_i}^{g_d}\left(\pi_{w_i}\right)\} = \{\chi_{w_1}\left(\pi_{w_1}\right),\dots,\chi_{w_d}\left(\pi_{w_d}\right)\}
$$
for any $i=1,\dots,d$. Since $\chi_v$ is trivial on $\mathcal{K}_v = T\left(\mathcal{O}_v\right)$, we have from Note \ref{torhecnote} that $\prod_{i=1}^d\chi_{w_i}\left(\pi_{w_i}\right) = \chi_v\left(1\right) = 1$, so $\left(\chi^{g_1}\dotsm\chi^{g_d}\right)_w\left(\pi_w\right) =1$ for all $w \mid v$. Then the equality follows.

It now suffices to show that, for $\overline{\left(h_1,\dots,h_{m-d}\right)} \neq \overline{\left(h'_1,\dots,h'_{m-d}\right)}$, we have
$$
\overline{\left(h_1,\dots,h_{m-d},g_1,\dots,g_d\right)} \neq \overline{\left(h'_1,\dots,h'_{m-d},g_1,\dots,g_d\right)}.
$$
If not, then  $\{h'_1,\dots,h'_{m-d},g_1,\dots,g_d\} = \{gh_1,\dots,gh_{m-d},gg_1,\dots,gg_d\}$ as multisets for some $g \in G$. Since we have $\{gg_1,\dots,gg_d\} = \{g_1,\dots,g_d\}$, this implies that $\{h'_1,\dots,h'_{m-d}\} = \{gh_1,\dots,gh_{m-d}\}$ as multisets, but then we have $\overline{\left(h_1,\dots,h_{m-d}\right)} = \overline{\left(h'_1,\dots,h'_{m-d}\right)}$, which is false.
\end{proof}
\begin{remark} \label{sgmrem2}
It follows from the proof of Lemma \ref{canclem} that $\#S'\left(G,m\right) = S\left(G,m\right) - S\left(G,m-d\right)$. Combining this with Remark \ref{sgmrem}, we obtain
$$
\#S'\left(G,m\right) = \frac{1}{d}\left({{d+m-1}\choose{d-1}} - {{m-1}\choose{d-1}}\right) = b\left(d,m\right).
$$
Note that the term $\frac{1}{d}{{m-1}\choose{d-1}}$ only appears when $d \leq m$.
\end{remark}
\begin{mydef} \label{pseriesdef}
For all $\chi \in \mathcal{U}$, $\Re s > 0$ and non-archimedean places $v \nmid \infty$, set
$$
F_{m,\chi,v}\left(s\right) = \prod_{\overline{\left(g_1,\dots,g_m\right)} \in S'\left(G,m\right)} L_v\left(\chi^{g_1}\dotsm\chi^{g_m},ms\right), \quad
G_{m,\chi,v}\left(s\right) = \frac{\widehat{H}_v\left(\phi_{m,v},\chi_v;-s\right)}{F_{m,\chi,v}\left(s\right)},
$$
and define
\begin{equation} \label{regeq}
F_{m,\chi}\left(s\right) = \prod_{v \nmid \infty} F_{m,\chi,v}\left(s\right) = \prod_{\overline{\left(g_1,\dots,g_m\right)} \in S'\left(G,m\right)} L\left(\chi^{g_1}\dotsm\chi^{g_m},ms\right),
\end{equation}
$$
G_{m,\chi}\left(s\right) = \prod_{v \nmid \infty} G_{m,\chi,v}\left(s\right).
$$
For any non-archimedean place $v \nmid \infty$, write
$$
\widehat{H}_v\left(\phi_{m,v},\chi_v;-s\right) = \sum_{n=0}^{\infty}\frac{a_{\chi,v,n}}{q_v^{ns}},
$$
where $a_{\chi,v,n} = \int_{H_v\left(t_v\right) = q_v^n}\phi_{m,v}\left(t_v\right)\chi_v\left(t_v\right)d\mu_v$, and write
$$
F_{m,\chi,v}\left(s\right) = 1 + \sum_{n=1}^{\infty}\frac{b_{\chi,v,mn}}{q_v^{mns}}
$$
for the expansion of $F_{m,\chi,v}\left(s\right)$ as a multidimensional geometric series in $q_v^{ms}$, so
$$
G_{m,\chi,v}\left(s\right) = \frac{\sum_{n=0}^{\infty}\frac{a_{\chi,v,n}}{q_v^{ns}}}{ 1 + \sum_{n=1}^{\infty}\frac{b_{\chi,v,mn}}{q_v^{mns}}} = \sum_{n=0}^{\infty} \frac{d_{\chi,v,n}}{q_v^{ns}},
$$
where $d_{\chi,v,n}$ is defined for all $n \geq 0$ by the iterative formula
\begin{equation} \label{deq}
d_{\chi,v,n} = a_{\chi,v,n} - \sum_{r=1}^{\lfloor \frac{n}{m} \rfloor} b_{\chi,v,mr}d_{\chi,v,n-mr}.
\end{equation}
In particular, we have $d_{\chi,v,n} = a_{\chi,v,n}$ for $0 \leq n \leq m-1$.
\end{mydef}
\begin{corollary} \label{lftcor}
For $v \not\in S'\left(\om\right)$ a non-archimedean place, we have $d_{\chi,v,0} = 1$ and $d_{\chi,v,n} = 0$ for all $n \in \{1,\dots,m\}$.
\end{corollary}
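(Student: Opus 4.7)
The plan is to combine Proposition \ref{truncprop}, which gives an explicit truncation formula for $\widehat{H}_v(\phi_{m,v},\chi_v;-s)$, with the combinatorial identities of Proposition \ref{partprop} and the cancellation of Lemma \ref{canclem}, matched at the coefficient of $q_v^{-ms}$. From Proposition \ref{truncprop} I read off $a_{\chi,v,0} = 1$, $a_{\chi,v,n} = 0$ for $1 \leq n \leq m-1$, and $a_{\chi,v,n} = c_{\chi,v,n} - c_{\chi,v,n-d}$ for $n \geq m$ (with $c_{\chi,v,k} = 0$ for $k < 0$). The iterative formula \eqref{deq} yields $d_{\chi,v,0} = 1$ immediately, and $d_{\chi,v,n} = a_{\chi,v,n} = 0$ for $1 \leq n \leq m-1$ since the sum on the right of \eqref{deq} is empty. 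The case $n = m$ reduces to $d_{\chi,v,m} = a_{\chi,v,m} - b_{\chi,v,m}$, so everything comes down to showing $a_{\chi,v,m} = b_{\chi,v,m}$.

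I would split on the splitting behaviour of $v$. Since $v \notin S'(\om)$ is unramified in $E/K$, all places $w \mid v$ share a common inertia degree $u$, with $r = d/u$ of them. If $u > 1$ then each factor $L_v(\chi^{g_1}\dotsm\chi^{g_m},ms)$ is a power series in $q_v^{-ums}$, so $F_{m,\chi,v}(s)$ has no $q_v^{-ms}$ term and $b_{\chi,v,m} = 0$; also $c_{\chi,v,k}$ vanishes unless $u \mid k$. In the non-prime $d$ case, $\gcd(d,m) = 1$ combined with $u \mid d$ forces $u \nmid m$ and $u \nmid m-d$, killing $a_{\chi,v,m}$. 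In the prime $d$ case with $u > 1$ one has $u = d$, $r = 1$, and Note \ref{torhecnote} gives $\chi_{w_1}(\pi_{w_1}) = 1$, so $c_{\chi,v,m}$ and $c_{\chi,v,m-d}$ are both $1$ (if $d \mid m$) or both $0$.

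The substantive case is $u = 1$ (totally split, $r = d$). Setting $\rho_i = \chi_{w_i}(\pi_{w_i})$ and using $(\chi^g)_{w_i}(\pi_{w_i}) = \chi_{gw_i}(\pi_{gw_i})$ for unramified $\chi$, Lemma \ref{canclem} gives
\[
F_{m,\chi,v}(s) = \frac{\prod_{\overline{(g_1,\dots,g_m)} \in S(G,m)} L_v(\chi^{g_1}\dotsm\chi^{g_m},ms)}{\prod_{\overline{(h_1,\dots,h_{m-d})} \in S(G,m-d)} L_v(\chi^{h_1}\dotsm\chi^{h_{m-d}},ms)}.
\]
Each factor expands as $L_v(\psi,ms) = 1 + (\sum_i \psi_{w_i}(\pi_{w_i})) q_v^{-ms} + O(q_v^{-2ms})$, so the coefficients of $q_v^{-ms}$ in the numerator and denominator equal $\sum_{\overline{(g)} \in S(G,m)} \phi_{\overline{(g)}}(\rho)$ and $\sum_{\overline{(h)} \in S(G,m-d)} \phi_{\overline{(h)}}(\rho)$ respectively, in the notation of Proposition \ref{partprop}. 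Hence $b_{\chi,v,m}$ equals their difference. On the other hand $a_{\chi,v,m} = f_{d,m}(\rho) - f_{d,m-d}(\rho)$, and Proposition \ref{partprop}(iii)--(iv) writes $f_{d,k}(\rho) = \sum_{\overline{(g)} \in S(G,k)} \phi_{\overline{(g)}}(\rho)$ modulo a correction $-(d-1)(\rho_1\dotsm\rho_d)^{k/d}$ present only when $d$ is prime and $d \mid k$. By Note \ref{torhecnote}, $\rho_1\dotsm\rho_d = 1$, so these corrections equal $-(d-1)$ and cancel in the difference $f_{d,m}(\rho) - f_{d,m-d}(\rho)$, yielding $a_{\chi,v,m} = b_{\chi,v,m}$.

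The main technical point will be the prime-$d$-divides-$m$ corner of the totally split case: one must see that the corrections in Proposition \ref{partprop}(iv) cancel cleanly between $c_{\chi,v,m}$ and $c_{\chi,v,m-d}$, which rests on the automorphic identity $\rho_1\dotsm\rho_d = 1$ from Note \ref{torhecnote}. Once that alignment is in place, matching $b_{\chi,v,m}$ via Lemma \ref{canclem} with the same combinatorial quantity is essentially routine.
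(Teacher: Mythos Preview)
Your proposal is correct and follows essentially the same route as the paper: both reduce to $b_{\chi,v,m}=c_{\chi,v,m}-c_{\chi,v,m-d}$ via \eqref{deq} and Proposition~\ref{truncprop}, then split on whether $v$ is totally split and invoke Proposition~\ref{partprop} together with Lemma~\ref{canclem} (and Note~\ref{torhecnote} for $\prod_w \chi_w(\pi_w)=1$) in the totally split case. The only cosmetic differences are that in the totally split case you pass through the quotient form of Lemma~\ref{canclem} before expanding, whereas the paper expands the product over $S'(G,m)$ directly and then appeals to Proposition~\ref{partprop} and Lemma~\ref{canclem}; and in the inert prime-$d$ case the paper simply notes $T(K_v)=T(\mathcal{O}_v)$ so $\widehat{H}_v\equiv 1$, which is slightly cleaner than your coefficient computation.
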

\begin{proof}
Let $c_{\chi,v,n}$ be as defined in Proposition \ref{truncprop}. Since $v \not\in S'\left(\om\right)$, we have from \emph{loc.\ cit.\ }that $a_{\chi,v,0} =1$, $a_{\chi,v,n} = 0$ for $n \in \{1,\dots,m-1\}$ and $a_{\chi,v,m} = c_{\chi,v,m} - c_{\chi,v,m-d}$. Then, by \eqref{deq}, we see that $d_{\chi,v,0} = a_{\chi,v,0} = 1$ and $d_{\chi,v,n} = a_{\chi,v,n} = 0$ for $1 \leq n \leq m-1$. Further, we obtain
$$
d_{\chi,v,m} = a_{\chi,v,m} - b_{\chi,v,m}d_{\chi,v,0} = c_{\chi,v,m} - c_{\chi,v,m-d} - b_{\chi,v,m},
$$
so, to complete the proof, it suffices to show that $b_{\chi,v,m} = c_{\chi,v,m} - c_{\chi,v,m-d}$. \\
Since $E/K$ is Galois, all of the places $w_1,\dots,w_r$ of $E$ over $v$ share a common inertia degree $d_v$. Since $\chi_v\left(T\left(\mathcal{O}_v\right)\right) = 1$, it is unramified as a Hecke character at all of the $w_i$ (see Note \ref{torhecnote}), and for any $g_1,\dots,g_m \in G$, so is $\chi^{g_1}\dotsm\chi^{g_m}$. Then
\begin{equation} \label{lfneq}
\begin{aligned}
L_v\left(\chi^{g_1} \dotsm \chi^{g_m},ms\right) & = \prod_{i=1}^r\left(1-\frac{\left(\chi^{g_1}\dotsm\chi^{g_m}\right)_{w_i}\left(\pi_{w_i}\right)}{q_v^{d_v ms}}\right)^{-1}  \\
&  = 1 + \frac{1}{q_v^{d_v ms}} \sum_{i=1}^r \left(\chi^{g_1}\dotsm\chi^{g_m}\right)_{w_i} \left(\pi_{w_i}\right) + O\left(\frac{1}{q_v^{\left(d_v m+1\right)s}}\right).
\end{aligned}
\end{equation}

First, suppose that $v$ is totally split in $E/K$. Then \eqref{lfneq} gives
$$
L_v\left(\chi^{g_1} \dotsm \chi^{g_m},ms\right) = 1 + \frac{ \phi_{\left(g_1,\dots,g_m\right)} \left(\chi_{w_1}\left(\pi_{w_1}\right),\dots,\chi_{w_d}\left(\pi_{w_d}\right)\right)}{q_v^{ms}} + O\left(\frac{1}{q_v^{\left(m+1\right)s}}\right).
$$
Since $G$ acts freely and transitively on the $w_i$, it follows from Proposition \ref{partprop} and Lemma \ref{canclem} that $b_{\chi,v,m} = c_{\chi,v,m} - c_{\chi,v,m-d}$, and so $d_{\chi,v,m} = 0$.

Now assume that $v$ is not totally split in $E/K$. If $\gcd \left(d,m\right) = 1$, then $c_{\chi,v,m} = c_{\chi,v,m-d}  = 0$, as $c_{\chi,v,n} = 0$ whenever $d_v \nmid n$. If $d$ is prime, then $v$ is inert and we have $\widehat{H}_v\left(\phi_{m,v},\chi_v;-s\right) = 1$ since $T\left(K_v\right) = T\left(\mathcal{O}_v\right)$. Then, in either case, we have $c_{\chi,v,m} - c_{\chi,v,m-d} = 0$, and \eqref{lfneq} implies that $b_{\chi,v,m} = 0$, hence $d_{\chi,v,m} = 0$.
\end{proof}
\begin{corollary} \label{gftcor}
For any $\chi \in \mathcal{U}$, we have
$$
\widehat{H}\left(\phi_{m};\chi;-s\right) = \prod_{v \mid \infty}\widehat{H}_v\left(1,\chi_v;-s\right)F_{m,\chi}\left(s\right) G_{m,\chi}\left(s\right),
$$
where $G_{m,\chi}\left(s\right)$ is holomorphic and uniformly bounded with respect to $\chi$ for $\Re s \geq \frac{1}{m}$ and $G_{m,1}\left(\frac{1}{m}\right) \neq 0$. In particular, $\widehat{H} \left(\phi_m,\chi;-s\right)$ possesses a holomorphic continuation to the line $\Re s = \frac{1}{m}$, apart from possibly at $s = \frac{1}{m}$. When $\chi=1$, the right-hand side has a pole of order $b\left(d,m\right)$ at $s=\frac{1}{m}$.
\end{corollary}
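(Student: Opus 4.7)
The factorization
$$
\widehat{H}\left(\phi_m,\chi;-s\right) = \prod_{v\mid\infty}\widehat{H}_v\left(1,\chi_v;-s\right) \cdot F_{m,\chi}\left(s\right) \cdot G_{m,\chi}\left(s\right)
$$
is essentially definitional: the global Fourier transform factors as a product of local ones (justified for $\Re s > 1/m$ by Lemma \ref{aclem} together with absolute convergence of the L-function Euler product at $\Re(ms) > 1$), the archimedean factors are $\widehat{H}_v(1,\chi_v;-s)$ since $\phi_{m,v} \equiv 1$ for $v \mid \infty$ by Definition \ref{inddef}, and at each finite place the substitution $\widehat{H}_v(\phi_{m,v},\chi_v;-s) = F_{m,\chi,v}(s) G_{m,\chi,v}(s)$ of Definition \ref{pseriesdef} converts the local Fourier factor into a Hecke L-factor times a correction.

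By \eqref{regeq} and Remark \ref{sgmrem2}, the function $F_{m,\chi}(s)$ is a product of $b(d,m)$ Hecke L-functions of the form $L(\chi^{g_1}\cdots\chi^{g_m},\,ms)$. Each such L-function meromorphically continues to $\mathbb{C}$ by Theorem \ref{hlfthm}, admitting at most a simple pole on the line $\Re s = 1/m$, occurring only at $s = 1/m$ when the associated character product is principal with zero imaginary exponent. When $\chi = 1$, every factor equals $\zeta_E(ms)$, so $F_{m,1}(s)$ has a pole of order exactly $b(d,m)$ at $s = 1/m$.

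The crux is the analytic control of $G_{m,\chi}(s) = \prod_{v \nmid \infty} G_{m,\chi,v}(s)$. At any place $v \notin S'(\om)$, Corollary \ref{lftcor} yields $G_{m,\chi,v}(s) = 1 + \sum_{n \geq m+1} d_{\chi,v,n} q_v^{-ns}$, and the key estimate to verify is a polynomial bound $|d_{\chi,v,n}| = O_m(n^{O(1)})$ uniform in $\chi$ and $v$. This follows because $|\chi_w(\pi_w)| \leq 1$ makes the coefficients $a_{\chi,v,n}$ and $b_{\chi,v,mn}$ polynomially bounded in $n$ (as sums of products of unit-modulus Hecke values), and the recurrence \eqref{deq} propagates such a bound to the $d_{\chi,v,n}$. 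Consequently, on $\Re s \geq 1/m$,
$$
G_{m,\chi,v}(s) = 1 + O\bigl(q_v^{-1-1/m}\bigr)
$$
with implicit constant uniform in $\chi$. Since $\sum_v q_v^{-1-1/m} < \infty$, the Euler product converges absolutely and uniformly on this closed half-plane, giving a holomorphic function there. The finitely many factors at $v \in S'(\om) \setminus S_\infty$ are holomorphic and $\chi$-uniformly bounded by Lemma \ref{aclem}, and their denominators $F_{m,\chi,v}(s)$ are finite products of local Hecke L-factors nonvanishing on $\Re(ms) \geq 1$, so these places pose no analytic obstruction.

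For the nonvanishing at $\chi = 1$, $s = 1/m$: Lemma \ref{nontrivlem} gives $\widehat{H}_v(\phi_{m,v}, 1; -1/m) > 0$ at each $v \nmid \infty$ (the integrand is non-negative), and $F_{m,1,v}(1/m)$ is a positive product of local Euler factors, so each $G_{m,1,v}(1/m) > 0$; the absolute convergence of the Euler product forces $G_{m,1}(1/m) > 0$. Assembling the three factors yields the claimed holomorphic continuation of $\widehat{H}(\phi_m,\chi;-s)$ to the line $\Re s = 1/m$ (holomorphic off $s = 1/m$) and a pole of order exactly $b(d,m)$ at $s = 1/m$ when $\chi = 1$. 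The hard part is the $\chi$-uniform polynomial bound on $d_{\chi,v,n}$; the essential ingredient that makes it succeed is Corollary \ref{lftcor}, whose vanishing of $d_{\chi,v,n}$ for $n \leq m$ at good places supplies exactly the extra decay factor $q_v^{-1/m}$ beyond the critical $q_v^{-1}$ needed to extend absolute convergence of the Euler product to the boundary $\Re s = 1/m$.
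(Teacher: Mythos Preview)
Your overall architecture matches the paper's proof: factor the global transform, identify $F_{m,\chi}$ as a product of $b(d,m)$ Hecke $L$-functions, and show the correction $G_{m,\chi}$ extends holomorphically past $\Re s=\tfrac{1}{m}$ with $G_{m,1}(\tfrac{1}{m})\neq 0$. The treatment of the bad places and of nonvanishing is fine.

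There is, however, a real gap in your coefficient bound. You assert that because $a_{\chi,v,n}$ and $b_{\chi,v,mn}$ are polynomially bounded in $n$, the recurrence \eqref{deq} ``propagates'' a polynomial bound to $d_{\chi,v,n}$. This inference is false in general: dividing one power series with polynomially bounded coefficients by another can produce exponential growth (take $1/(1-2x)=\sum 2^n x^n$). The recurrence alone does not see any special structure in $F_{m,\chi,v}$. The correct reason your polynomial bound holds is that
\[
F_{m,\chi,v}(s)^{-1}=\prod_{\overline{(g_1,\dots,g_m)}\in S'(G,m)}\ \prod_{w\mid v}\Bigl(1-(\chi^{g_1}\cdots\chi^{g_m})_w(\pi_w)\,q_w^{-ms}\Bigr)
\]
is a \emph{polynomial} in $q_v^{-s}$ of degree bounded purely in terms of $d$ and $m$, with coefficients of bounded modulus. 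Hence $G_{m,\chi,v}=\widehat H_v(\phi_{m,v},\chi_v;-s)\cdot F_{m,\chi,v}(s)^{-1}$ is a finite linear combination of shifted copies of the series $\sum_n a_{\chi,v,n}q_v^{-ns}$, and the polynomial bound on $d_{\chi,v,n}$ follows at once. With this correction your argument goes through and indeed gives the uniform estimate $G_{m,\chi,v}(s)=1+O(q_v^{-1-1/m})$ on $\Re s\geq \tfrac{1}{m}$ for all $v\notin S'(\om)$.

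For comparison, the paper does not attempt a sharp bound on $d_{\chi,v,n}$. It uses crude exponential estimates $|a_{\chi,v,n}|\leq 2d^{\,n}$ and $|b_{\chi,v,n}|\leq (b(d,m)d)^n$, feeds these through the recurrence to obtain $|d_{\chi,v,n}|\leq (2b(d,m)d)^n$, and then restricts to primes with $q_v$ above a threshold $N$ (depending on $\Re s$) chosen so that $q_v^{-\Re s}$ beats the exponential base; the finitely many excluded primes are handled separately. Your polynomial bound, once justified as above, is sharper and avoids the threshold, which is a modest simplification over the paper's route.
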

\begin{proof}
By construction, $\widehat{H}\left(\phi_{m};\chi;-s\right) = \prod_{v \mid \infty}\widehat{H}_v\left(1,\chi_v;-s\right)F_{m,\chi}\left(s\right) G_{m,\chi}\left(s\right)$. We will prove the stronger result that $G_{m,\chi}\left(s\right)$ is holomorphic on $\Re s > \frac{1}{m+1}$ and uniformly bounded with respect to both $\chi$ and $\epsilon$ on $\Re s \geq \frac{1}{m+1}+\epsilon$ for all $\epsilon > 0$.

For a place $v \nmid \infty$ and $s \in \mathbb{C}$ with $\Re s = \sigma \geq \epsilon$ for some $\epsilon > 0$, we have
\begin{equation*}
\begin{aligned}
\sum_{n=0}^{\infty}\left|\frac{a_{\chi,v,n}}{q_v^{ns}}\right| = \sum_{n=0}^{\infty}\frac{1}{q_v^{n\sigma}}\left|\int_{H_v\left(t_v\right)=q_v^n}\phi_{m,v}\left(t_v\right)\chi_v\left(t_v\right)d\mu_v\right| & \\
\leq \sum_{n=0}^{\infty}\frac{1}{q_v^{n\sigma}}\int_{H_v\left(t_v\right)=q_v^n}\left|\phi_{m,v}\left(t_v\right)\chi_v\left(t_v\right)\right|d\mu_v & \\
= \int_{T\left(K_v\right)}\left|\frac{\phi_{m,v}\left(t_v\right)\chi_v\left(t_v\right)}{H_v\left(t_v\right)^s}\right|d\mu_v,\\
\end{aligned}
\end{equation*}
so, by Lemma \ref{aclem}, the series $\sum_{n=0}^{\infty}\frac{a_{\chi,v,n}}{q_v^{ns}}$ is absolutely convergent and bounded by a constant depending only on $\epsilon$ and $v$. Now, for any $N \in \mathbb{N}$, we have
$$
\left|\sum_{n=0}^{\infty}\frac{a_{\chi,v,n}}{q_v^{ns}} - \sum_{n=0}^{N}\frac{a_{\chi,v,n}}{q_v^{ns}}\right| = \left|\sum_{n=N+1}^{\infty}\frac{a_{\chi,v,n}}{q_v^{ns}}\right| \leq \sum_{n=0}^{\infty}\frac{|a_{\chi,v,n}|}{q_v^{n\epsilon}},
$$
from which it follows that $\sum_{n=0}^{\infty}\frac{a_{\chi,v,n}}{q_v^{ns}}$ is also uniformly convergent, hence the function $\widehat{H}_v\left(\phi_{m,v},\chi_v;-s\right)$ is holomorphic on $\Re s > 0$. Then, we note that $F_{m,\chi,v}\left(s\right)$ is clearly holomorphic on $\Re s > 0$, and we have
$$
\frac{1}{|F_{m,\chi,v}\left(s\right)|} = \prod_{\overline{\left(g_1,\dots,g_m\right)} \in S'\left(G,m\right)} \left|L_v\left(\chi^{g_1}\dotsm\chi^{g_m},ms\right)^{-1}\right| \leq \left(1 + \frac{1}{q_v^{m\epsilon}}\right)^{db\left(d,m\right)},
$$
hence $G_{m,\chi,v}\left(s\right)$ is holomorphic on $\Re s > 0$ and is bounded uniformly in terms of $\epsilon$ and $v$ on $\Re s \geq \epsilon$.

To conclude the result, it suffices to prove that there exists $N \in \mathbb{N}$ such that
$$
\prod_{q_v > N }G_{m,\chi,v}\left(s\right)
$$
is holomorphic and uniformly bounded with respect to $\chi$ on $\Re s \geq \frac{1}{m+1} + \epsilon$ for all $\epsilon > 0$.
Let $v \not\in S'\left(\om\right)$ be non-archimedean, and let $\Re s = \sigma \geq \frac{1}{m+1} + \epsilon$. From
$$
\widehat{H}_v\left(\phi_{m,v},\chi_v;-s\right) = \left(1-\frac{1}{q_v^{ds}}\right)L_v\left(\chi,s\right),
$$
and the definition of $F_{m,\chi,v}\left(s\right)$, we have
$$
|a_{\chi,v,n}| \leq 2d^n, \quad |b_{\chi,v,n}| \leq \left(b\left(d,m\right)d\right)^n.
$$
Then, by \eqref{deq}, it follows inductively that we have
\begin{equation} \label{dineq}
|d_{\chi,v,n}| \leq 2^n \left(b\left(d,m\right)d\right)^n = \left(2b\left(d,m\right) d\right)^n.
\end{equation}
Choose $N > \left(2b\left(d,m\right)d\right)^{\frac{1}{\sigma}}$ so that, for all places $v \nmid \infty$ with $q_v > N$, we have $v \not\in S'\left(\om\right)$.
Now, any normally convergent infinite product is holomorphic (see \cite[\S2]{CFT}), and $\prod_{q_v > N}G_{m,\chi,v}\left(s\right)$ converges normally if and only if
$$
\sum_{q_v > N} \sum_{n=m+1}^{\infty}\frac{|d_{\chi,v,n}|}{q_v^{n\sigma}}
$$
converges. By \eqref{dineq} and our condition on $N$, we need only check convergence of
$$
\sum_{q_v > N}\frac{1}{q_v^{\left(m+1\right)\sigma}},
$$
which is clear.
Then $G_{m,\chi}\left(s\right)$ is holomorphic on $\Re s > \frac{1}{m+1}$. Further, for $\Re s \geq \frac{1}{m+1}+\epsilon$, we have the bound
$$
\left|\prod_{q_v > N}G_{m,\chi,v}\left(s\right)\right| \leq \prod_{q_v > N}\left(1+\sum_{n=m+1}^{\infty}\left(\frac{2b\left(d,m\right)d}{q_v^{\frac{1}{m+1}+\epsilon}}\right)^n\right),
$$
which is uniform with respect to $\chi$.
Now, as a convergent infinite product, $G_{m,1}\left(\frac{1}{m}\right)$ is zero if and only if $G_{m,1,v}\left(\frac{1}{m}\right) = \frac{\widehat{H}_v\left(\phi_{m,v},1;-\frac{1}{m}\right)}{F_{m,1,v}\left(\frac{1}{m}\right)} = 0$ for some place $v \nmid \infty$. However, $\widehat{H}_v\left(\phi_{m,v},1;-\frac{1}{m}\right) \neq 0$ by Lemma \ref{nontrivlem}, so we conclude that $G_{m,1}\left(\frac{1}{m}\right) \neq 0$.
The order of the pole of the right-hand side being $b\left(d,m\right)$ follows from Theorem \ref{hlfthm}, since
\[
F_{m,1}\left(s\right) = \zeta_E\left(ms\right)^{b\left(d,m\right)}. \qedhere
\]
\end{proof}
\begin{note}
In constructing the regularisation $F_{m,\chi}\left(s\right)$, one must ensure that
$$
\frac{\widehat{H}_v\left(\phi_{m,v},\chi_v;-s\right)}{F_{m,\chi,v}\left(s\right)} = 1 + O\left(\frac{1}{q_v^{\left(m+1\right)s}}\right)
$$
for all non-archimedean places $v$ with $q_v$ is sufficiently large. As seen above, the restrictions on $d$, $m$ and $E$ ensure that this is automatic for all such places which are not totally split, i.e.\ we only need to approximate the local Fourier transform at totally split places not in $S'\left(\om\right)$. Without these restrictions, one might have to approximate the local Fourier transform at places of more than one splitting type simultaneously, and to do this would require a new approach.
\end{note}
Before applying our key theorems, we give one more result, which will be used in order to move from the Poisson summation formula to the Tauberian theorem.
\begin{lemma} \label{latsumlem} \cite[Lem.~5.9,~p.~2577]{NVF}
Choose an $\mathbb{R}$-vector space norm $\|\cdot\|$ on $X^*\left(T_{\infty}\right)_{\mathbb{R}}$ and let $\mathcal{L} \subset X^*\left(T_{\infty}\right)_{\mathbb{R}}$ be a lattice. Let $C$ be a compact subset of $\Re s \geq \frac{1}{m}$ and let $g: X^*\left(T_{\infty}\right)_{\mathbb{R}} \times C \rightarrow \mathbb{C}$ be a function. If there exists $0 \leq \delta < \frac{1}{d-1}$ such that
$$
|g\left(\psi,s\right)| \ll_C \left(1+ \|\psi\|\right)^\delta
$$  
for all $\psi \in X^*\left(T_{\infty}\right)_{\mathbb{R}}$ and $s \in C$, then the sum
$$
\sum_{\psi \in \mathcal{L}}g\left(\psi,s\right)\prod_{v \mid \infty}\widehat{H}_{v}\left(1,\psi;-s\right)
$$
is absolutely and uniformly convergent on $C$.
\end{lemma}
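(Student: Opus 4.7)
The plan is to combine a decay estimate for the archimedean Fourier transforms in $\|\psi_\infty\|$ with the polynomial growth bound on $g$ and a standard lattice-sum comparison. First I would establish that, for each $v \mid \infty$, the local Fourier transform satisfies
\[
|\widehat{H}_v(1,\psi;-s)| \ll_C (1+\|\psi_v\|)^{-\gamma_v}
\]
uniformly on $C$ for some $\gamma_v > 0$. Using the section of $\deg_{T,v} \colon T(K_v) \to X_*(T_v)_{\mathbb{R}}$ from Lemma \hyperref[lplem2]{\ref*{lplem}\ref*{lplem2}}, the integrand can be re-expressed in logarithmic coordinates on $X_*(T_v)_{\mathbb{R}}$: $H_v^{-s}$ becomes $e^{-s\varphi_{\Sigma}(x)\log q_v}$ (exponential of the piecewise-linear function of Section \ref{subsectionG}) and $\psi_v$ becomes the complex exponential $e^{i\langle\psi_v,x\rangle}$. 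Decomposing $X_*(T_v)_{\mathbb{R}}$ along the cones of $\Sigma$, on each top-dimensional cone $\varphi_{\Sigma}$ is linear and the contribution becomes an explicit rational function in $\psi_v$ and $s$; summing over cones yields a bound of the claimed polynomial-decay form, with cross-cone boundary terms cancelling by continuity of $H_v^{-s}$.

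Taking the product over the (finitely many) archimedean places and pairing with the growth bound on $g$ yields
\[
\Bigl| g(\psi,s)\prod_{v\mid\infty}\widehat{H}_v(1,\psi;-s)\Bigr| \ll_C (1+\|\psi\|)^{\delta - \gamma},
\]
where $\gamma = \sum_{v\mid\infty}\gamma_v$ and $\|\psi\|$ is equivalent to $\max_{v\mid\infty}\|\psi_v\|$. If $\gamma$ can be chosen strictly greater than $\delta + \rank \mathcal{L}$, then absolute and uniform convergence on $C$ follows from the comparison
\[
\sum_{\psi \in \mathcal{L}}(1+\|\psi\|)^{-(\gamma-\delta)} \ll \int_{X^*(T_\infty)_{\mathbb{R}}}(1+\|x\|)^{-(\gamma-\delta)}\,dx < \infty.
\]

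The main obstacle is securing a sufficient decay rate $\gamma$ uniformly in $s \in C$. Since $\varphi_{\Sigma}$ is only piecewise linear, naive integration by parts in a single coordinate fails at cone boundaries, and one must perform a cone-by-cone analysis while tracking boundary contributions; these terms cancel thanks to the continuity (but not the differentiability) of $H_v^{-s}$. The hypothesis $\delta < \tfrac{1}{d-1}$ encodes exactly the margin between the decay rate $\gamma$ obtainable this way and the rank of $\mathcal{L}$. This is essentially the analysis carried out in \cite[Lem.~5.9]{NVF} for the toric setting, and adapting it to our anisotropic torus $T_\omega$ requires no new ideas beyond those already assembled in Sections \ref{subsectionG} and \ref{sectionHIF}.
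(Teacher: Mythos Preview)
The paper does not supply its own proof of this lemma: it is stated with a citation to \cite[Lem.~5.9,~p.~2577]{NVF} and used as a black box. Your sketch is a reasonable outline of the argument behind that citation (archimedean decay via cone-by-cone analysis of the piecewise-linear height, combined with the growth bound on $g$ and a lattice-sum comparison), and you yourself correctly identify it as such in your final paragraph; there is nothing to compare against in the present paper.
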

\begin{theorem} \label{zetathm}
Let
$$
\Omega_{m}\left(s\right) = Z_m \left(s\right) \left(s-\frac{1}{m}\right)^{b\left(d,m\right)}.
$$
Then $\Omega_{m}\left(s\right)$ admits an extension to a holomorphic function on $\Re s \geq \frac{1}{m}$.
\end{theorem}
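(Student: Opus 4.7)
The plan is to apply Bourqui's Poisson summation formula \cite[Thm.~3.35,~p.~64]{FZH} to $F(t) = \phi_m(t)/H(t)^s$ on $\Re s > 1/m$, producing the spectral identity \eqref{formappeq}. The requisite integrability is furnished by Lemma \ref{aclem}, while absolute convergence of $Z_m(s)$ on this half-plane (a separate step) justifies any interchange of summation and integration. Corollary \ref{gfttrivcor} then immediately restricts the sum to $\chi \in \mathcal{U}$, giving
\[
Z_m(s) = \frac{1}{\vol(T(\mathbb{A}_K)/T(K))}\sum_{\chi \in \mathcal{U}}\widehat{H}(\phi_m,\chi;-s).
\]

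Next, I would parameterize $\mathcal{U}$ via the type-at-infinity map \eqref{taieq}: since $T$ is anisotropic and $\mathcal{K}$ has finite index in $\mathsf{K}_T$ by Lemma \ref{mcslem}, the structure noted after Definition \ref{kudef} presents $\mathcal{U}$, up to a finite group, as the dual of a full-rank lattice in $X^*(T_\infty)_\mathbb{R}$. For each such $\chi$, Corollary \ref{gftcor} provides the factorization
\[
\widehat{H}(\phi_m,\chi;-s) = \prod_{v\mid\infty}\widehat{H}_v(1,\chi_v;-s)\,F_{m,\chi}(s)\,G_{m,\chi}(s),
\]
with $G_{m,\chi}$ holomorphic and uniformly bounded on $\Re s \geq 1/m$. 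For the trivial character, $F_{m,1}(s) = \zeta_E(ms)^{b(d,m)}$ is the unique source of a pole at $s=1/m$, of order precisely $b(d,m)$, and the prefactor $(s-1/m)^{b(d,m)}$ in $\Omega_m(s)$ cancels it exactly; moreover $G_{m,1}(1/m) \neq 0$ by Corollary \ref{gftcor}, ensuring a non-vanishing main term at $s=1/m$.

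The main obstacle is to show that the tail sum over $\chi \in \mathcal{U}\setminus\{1\}$, after multiplication by $(s-1/m)^{b(d,m)}$, converges absolutely and uniformly on compact subsets of $\Re s \geq 1/m$. I would apply Lemma \ref{latsumlem} with $g(\psi,s)$ set equal to $(s-1/m)^{b(d,m)}F_{m,\chi_\psi}(s)G_{m,\chi_\psi}(s)$ as $\chi_\psi$ runs over the parameterizing lattice, using Lemma \ref{lflem} with $\varepsilon > 0$ chosen small enough that $b(d,m)\varepsilon < 1/(d-1)$ to obtain the polynomial-in-$(q(\chi),\|\chi_\infty\|)$ bound on the Hecke $L$-functions composing $F_{m,\chi}$, and invoking uniform boundedness of $G_{m,\chi}$ from Corollary \ref{gftcor}. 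One subtle point demanding care is that, for non-trivial $\chi \in \mathcal{U}$, the Galois-twisted products $\chi^{g_1}\cdots\chi^{g_m}$ may incidentally be principal, producing poles of $L(\chi^{g_1}\cdots\chi^{g_m},ms)$ at $s=(1+i\theta)/m$ on the line $\Re s = 1/m$; each such summand can only contribute a pole of order at most $b(d,m) = \#S'(G,m)$, which is absorbed individually by the prefactor, and the lattice-sum estimate then absorbs them collectively, yielding the required holomorphic extension.
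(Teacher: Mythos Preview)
Your proposal is correct and follows essentially the same route as the paper: Poisson summation \`a la Bourqui, restriction to $\mathcal{U}$ via Corollary~\ref{gfttrivcor}, the factorisation of Corollary~\ref{gftcor}, the $L$-function bound of Lemma~\ref{lflem}, and the lattice-sum criterion Lemma~\ref{latsumlem}. Two small points to tighten: first, the paper justifies the Poisson formula not via absolute convergence of $Z_m$ but via Bourqui's criterion \cite[Cor.~3.36]{FZH}, which needs the $\mathcal{K}$-invariance of $\phi_m(t)/H(t)^s$ (supplied by Lemma~\ref{mcslem}) together with absolute convergence of the \emph{right-hand} side; second, to pass from the $(q(\chi),\|\chi_\infty\|)$-bound of Lemma~\ref{lflem} to a pure $(1+\|\psi\|)^\varepsilon$ bound you must use that the conductors $q(\chi)$ are uniformly bounded for $\chi\in\mathcal{U}$, which follows from the finite-index fact you already cite. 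As a side remark, any principal $\chi^{g_1}\cdots\chi^{g_m}$ is trivial on $\mathbb{A}_K^*$ and hence has $\theta=0$, so the incidental poles you flag all sit at $s=1/m$ itself, not elsewhere on the line.
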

\begin{proof}
Let $s \in \mathbb{C}$ with $\Re s > \frac{1}{m}$. Combining the formal application \eqref{formappeq} of the Poisson summation formula with Lemma \ref{vollem} and Corollary \ref{gfttrivcor} gives
\begin{equation} \label{psfeq}
\begin{aligned}
Z_m \left(s\right) = \frac{\Res_{s=1}\zeta_K\left(s\right)}{d \Res_{s=1}\zeta_E\left(s\right)} \sum_{\chi \in \mathcal{U}} \widehat{H}\left(\phi_m,\chi;-s\right).
\end{aligned}
\end{equation}
By Corollary \ref{gftcor}, the function $s \mapsto \frac{\phi_m\left(t\right)}{H\left(t\right)^s}$ is $L^1$ for $\Re s > \frac{1}{m}$. To show that this application is valid, we apply Bourqui's criterion \cite[Cor.~3.36,~p.~64]{FZH}, by which it suffices to show that the right-hand side of \eqref{psfeq} is absolutely convergent, $s \mapsto \frac{\phi_m\left(t\right)}{H\left(t\right)^s}$ is continuous and there exists an open neighbourhood $U \subset T\left(\mathbb{A}_K\right)$ of the origin and strictly positive constants $C_1$ and $C_2$ such that for all $u \in U$ and all $t \in T\left(\mathbb{A}_K\right)$, we have
$$
C_1 \left|\frac{\phi_{m}\left(t\right)}{H\left(t\right)^s}\right| \leq \left|\frac{\phi_{m}\left(u t\right)}{H\left(u t\right)^s}\right| \leq C_2 \left|\frac{\phi_{m}\left(t\right)}{H\left(t\right)^s}\right|.
$$
We may take $U = \mathcal{K}$ by Lemma \ref{mcslem}, and continuity is clear. It only remains to prove the absolute convergence. We will prove the stronger result that
$$
\sum_{\chi \in \mathcal{U}} \widehat{H}\left(\phi_m,\chi;-s\right)\left(s-\frac{1}{m}\right)^{b\left(d,m\right)}
$$
is absolutely and uniformly convergent on any compact subset $C$ of the half-plane $\Re s \geq \frac{1}{m}$, which will both verify validity of the application and prove the theorem.

Since $\mathcal{K} \subset \mathsf{K}_T$ is of finite index, the map \eqref{taieq} yields a homomorphism
$$
\mathcal{U} \rightarrow X^*\left(T_{\infty}\right)_{\mathbb{R}}, \quad
\chi \mapsto \chi_{\infty},
$$
with finite kernel $\mathcal{N}$ and image $\mathcal{L}$ a lattice of full rank. We obtain
\[
\begin{aligned}
& \sum_{\chi \in \mathcal{U}} \widehat{H}\left(\phi_m,\chi;-s\right)\left(s-\frac{1}{m}\right)^{b\left(d,m\right)} \\
& = \sum_{\psi \in \mathcal{L}} \prod_{v \mid \infty} \widehat{H}_{v}\left(1,\psi;-s\right) \sum_{\substack{\chi \in \mathcal{U} \\ \chi_{\infty} = \psi}} \prod_{v \nmid \infty} \widehat{H}_v\left(\phi_{m,v},\chi_v;-s\right) \left(s-\frac{1}{m}\right)^{b\left(d,m\right)},
\end{aligned}
\]
where the inner sum is finite. Then, for $s \in C$, we have
\[
\begin{aligned}
& \sum_{\chi \in \mathcal{U}} \widehat{H}\left(\phi_m,\chi;-s\right)\left(s-\frac{1}{m}\right)^{b\left(d,m\right)} \\
& \ll \sum_{\psi \in \mathcal{L}} \prod_{v \mid \infty} \left|\widehat{H}_v\left(1,\psi;-s\right)\right| \sum_{\substack{\chi \in \mathcal{U} \\ \chi_{\infty} = \psi}} \prod_{v \nmid \infty} \left|\widehat{H}_v\left(\phi_{m,v},\chi_v;-s\right) \left(s-\frac{1}{m}\right)^{b\left(d,m\right)}\right|.
\end{aligned}
\]
Now, for $\chi \in \mathcal{U}$, we deduce from the proof of Corollary \ref{gftcor} that
\[
\left|\prod_{v \nmid \infty} \widehat{H}_v\left(\phi_{m,v},\chi_v;-s\right) \left(s-\frac{1}{m}\right)^{b\left(d,m\right)}\right|
\ll_C \left|F_{m,\chi}\left(s\right)\left(s-\frac{1}{m}\right)^{b\left(d,m\right)}\right|.
\]
In order to deduce the result from Lemma \ref{latsumlem}, it suffices to prove that, for each $\psi \in \mathcal{L}$  and some constant $0 \leq \delta < \frac{1}{d-1}$, we have
$$
\left|\sum_{\substack{\chi \in \mathcal{U} \\ \chi_{\infty} = \psi}}F_{m,\chi}\left(s\right)\left(s-\frac{1}{m}\right)^{b\left(d,m\right)}\right| \ll_C \left(1+ \|\psi\|\right)^{\delta}
$$
for $\|\cdot\|$ as in Definition \ref{vecnormdef}. As $\mathcal{K} \subset \mathsf{K}_T$ is of finite index, there exists a constant $Q>0$ such that $q\left(\chi\right) < Q$  for all $\chi \in \mathcal{U}$ (cf.\ \cite[Proof~of~Thm.~5.12,~p.~2579]{NVF}). 
Since $F_{m,\chi}\left(s\right)$ is a product of $b\left(d,m\right)$ $L$-functions of Hecke characters evaluated at $ms$, it follows from Lemma \ref{lflem} that
$$
\left|\sum_{\substack{\chi \in \mathcal{U} \\ \chi_{\infty} = \psi}}F_{m,\chi}\left(s\right)\left(s-\frac{1}{m}\right)^{b\left(d,m\right)}\right| \ll_{\varepsilon,C}|\mathcal{N}| \cdot Q^{\varepsilon}\left(1+\|\psi\|\right)^{\varepsilon}
$$
for all for all $\varepsilon > 0$ and $s \in C$. The result now follows from Lemma \ref{latsumlem}.
\end{proof}
\subsection{The leading constant}
In order to apply \cite[Thm.~3.3.2,~p.~624]{RPBH} and deduce Theorem \ref{mainthm} from Theorem \ref{zetathm}, it only remains to show that $\Omega_m\left(\frac{1}{m}\right) \neq 0$.
\begin{mydef}
Let $\mathcal{U}^G$ be the subgroup of $G$-invariant elements of $\mathcal{U}$, and set
$$
\mathcal{U}_0 = 
\begin{cases}
\mathcal{U}[m] \textrm{ if } d=2, \\
\mathcal{U}^G \cap \mathcal{U}[m] \textrm{ otherwise.}
\end{cases}
$$
\end{mydef}
\begin{lemma}
For any Galois extension of number fields $E/K$, the subgroup $\mathcal{U}[m] \leq \left(T\left(\mathbb{A}_K\right)/T\left(K\right)\right)^\wedge$ is finite. In particular, $\mathcal{U}_0$ is a finite subgroup of $\mathcal{U}$.
\end{lemma}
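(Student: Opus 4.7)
The plan is to exhibit $\mathcal{U}[m]$ as built from finitely many finite pieces, exploiting the anisotropy of $T$ to control the structure of $T\left(\mathbb{A}_K\right)/T\left(K\right)\mathsf{K}_T$. Once $\mathcal{U}[m]$ is finite, so is $\mathcal{U}_0$, because $\mathcal{U}_0 \subseteq \mathcal{U}[m]$ in both the $d=2$ and $d \geq 3$ cases.

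First I would reduce from $\mathcal{K}$ to $\mathsf{K}_T$. By Lemma \ref{mcslem}, $\mathcal{K}_v$ has finite index in $T\left(\mathcal{O}_v\right)$ for every $v$, with equality for all $v \notin S'\left(\om\right)$, so the group $N := T\left(K\right)\mathsf{K}_T / T\left(K\right)\mathcal{K}$ is finite. Pontryagin duality applied to the short exact sequence of locally compact abelian groups
$$
0 \to N \to T\left(\mathbb{A}_K\right)/T\left(K\right)\mathcal{K} \to T\left(\mathbb{A}_K\right)/T\left(K\right)\mathsf{K}_T \to 0
$$
yields the dual exact sequence
$$
0 \to \left(T\left(\mathbb{A}_K\right)/T\left(K\right)\mathsf{K}_T\right)^\wedge \to \mathcal{U} \to N^\wedge \to 0.
$$
Since $N^\wedge$ is finite, it suffices to show that $\left(T\left(\mathbb{A}_K\right)/T\left(K\right)\mathsf{K}_T\right)^\wedge[m]$ is finite.

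Next I would invoke the note following Lemma \ref{lplem}: because $T$ is anisotropic (Section \ref{sectionTNT}), the type-at-infinity map fits $T\left(\mathbb{A}_K\right)/T\left(K\right)\mathsf{K}_T$ into an exact sequence
$$
0 \to F \to T\left(\mathbb{A}_K\right)/T\left(K\right)\mathsf{K}_T \to \Lambda \to 0
$$
with $F$ finite and $\Lambda \subset X^*\left(T_\infty\right)_\mathbb{R}$ a lattice of full rank $n$. Dualizing gives the exact sequence
$$
0 \to \Lambda^\wedge \to \left(T\left(\mathbb{A}_K\right)/T\left(K\right)\mathsf{K}_T\right)^\wedge \to F^\wedge \to 0,
$$
with $\Lambda^\wedge \cong \left(\mathbb{R}/\mathbb{Z}\right)^n$ and $F^\wedge$ finite. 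Since taking $m$-torsion is left exact, $\left(T\left(\mathbb{A}_K\right)/T\left(K\right)\mathsf{K}_T\right)^\wedge[m]$ is an extension of a subgroup of $F^\wedge$ by $\Lambda^\wedge[m] \cong \left(\mathbb{Z}/m\mathbb{Z}\right)^n$, hence finite. Combined with the first reduction, $\mathcal{U}[m]$ is finite.

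The only delicate point is bookkeeping: ensuring that Pontryagin duality applies term-by-term in each of the two short exact sequences above. This is routine, since $\mathsf{K}_T$ is compact-open in $T\left(\mathbb{A}_K\right)$, $T\left(K\right)$ is discrete and closed in $T\left(\mathbb{A}_K\right)$, and every kernel appearing is closed of finite index. The real content is the structural statement — due to anisotropy of $T$ — that the type-at-infinity map has finite kernel and full-rank lattice image; everything else is formal.
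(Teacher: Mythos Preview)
Your argument is correct and entirely self-contained within the paper's setup, but it is not the route the paper takes. The paper's proof is a one-liner via class field theory: it identifies $\mathcal{U}$ (through Note~\ref{torhecnote}) with a set of Hecke characters of $E$ unramified outside the places above $S'\left(\om\right)$, hence with a subset of $\Gal\left(E^{\ab}_{S'\left(\om\right)}/E\right)^\wedge$, and then observes that $\Hom\left(\Gal\left(E^{\ab}_{S'\left(\om\right)}/E\right),\mu_m\right)$ is finite. Your approach instead stays on the automorphic side: you reduce from $\mathcal{K}$ to $\mathsf{K}_T$ by a finite-index argument, then use the anisotropy of $T$ (the Note after Lemma~\ref{lplem}) to realise $T\left(\mathbb{A}_K\right)/T\left(K\right)\mathsf{K}_T$ as a finitely generated abelian group, whose dual therefore has finite $m$-torsion. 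The paper's proof is shorter but imports a substantial external fact (finiteness of abelian extensions of bounded exponent with restricted ramification), whereas yours is longer but uses only the structural results about $T$ already established in the paper; in that sense your argument is the more elementary and more internally coherent one.
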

\begin{proof}
By class field theory \cite[Ch.~VI,~\S6;~Ch.~VII,~\S6]{ANTN}, $\mathcal{U}$ may be interpreted as a subset of $\Gal\left(E^{\ab}_{S'\left(\om\right)}/E\right)^\wedge$ for $E^{\ab}_{S'\left(\om\right)}$ the maximal $S'\left(\om\right)$-unramified abelian extension of $E$, hence $\mathcal{U}[m]$ is in bijection with a subset of $\Hom\left(\Gal\left(E^{\textrm{ab}}_{S'\left(\om\right)}/E\right),\mu_m\right)$, a finite set.
\end{proof}
\newpage
\begin{lemma} \label{contriblem} \leavevmode
\begin{enumerate}[label=(\roman*)]
\item The characters $\chi \in \mathcal{U}$ contributing to the pole of $Z_m\left(s\right)$ of order $b\left(d,m\right)$ at $s=\frac{1}{m}$ are precisely those $\chi \in \mathcal{U}_0$ such that $\prod_{v \mid \infty}\widehat{H}_v\left(1,\chi_v;-\frac{1}{m}\right)G_{m,\chi}\left(\frac{1}{m}\right) \neq 0$.
\item Suppose that $d \neq 2$. If $d$ and $m$ are coprime, then $\mathcal{U}_0 = \{1\}$. If $d$ is prime and $m$ is a multiple of $d$, then $\mathcal{U}_0 = \{\chi \in \mathcal{U}: \chi^d = 1\}$.
\end{enumerate}
\end{lemma}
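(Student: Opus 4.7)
The plan is to use the factorisation
$$\widehat{H}\left(\phi_m,\chi;-s\right) = \prod_{v \mid \infty}\widehat{H}_v\left(1,\chi_v;-s\right) \cdot F_{m,\chi}\left(s\right) \cdot G_{m,\chi}\left(s\right)$$
from Corollary \ref{gftcor}. Since $G_{m,\chi}(s)$ and the archimedean factors are holomorphic at $s=\tfrac{1}{m}$, the order of the pole of $\widehat{H}(\phi_m,\chi;-s)$ at $s=\tfrac{1}{m}$ is governed entirely by $F_{m,\chi}(s) = \prod_{\overline{(g_1,\dots,g_m)} \in S'(G,m)} L(\chi^{g_1}\dotsm\chi^{g_m},ms)$, while the non-vanishing of the residue contribution is controlled by $\prod_{v \mid \infty}\widehat{H}_v(1,\chi_v;-\tfrac{1}{m})\cdot G_{m,\chi}(\tfrac{1}{m})$.

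For part (i), Theorem \ref{hlfthm} tells us that each Hecke $L$-function $L(\psi,ms)$ has a pole at $s=\tfrac{1}{m}$ of order exactly $1$ precisely when $\psi$ is the trivial character, since any nontrivial principal Hecke character $\|\cdot\|^{i\theta}$ would place its pole elsewhere on the vertical line $\Re s=\tfrac{1}{m}$. Hence $F_{m,\chi}(s)$ has a pole of the maximal order $b(d,m)=\#S'(G,m)$ at $s=\tfrac{1}{m}$ if and only if $\chi^{g_1}\dotsm\chi^{g_m}=1$ for every $\overline{(g_1,\dots,g_m)} \in S'(G,m)$. Specialising to the class $\overline{(1,\dots,1)}$ forces $\chi^m=1$; when $d\geq 3$, each class $\overline{(1,\dots,1,g)}$ with $g\in G\setminus\{1\}$ also lies in $S'(G,m)$ (its underlying set $\{1,g\}$ has size $2\leq d-1$) and forces $\chi^g\chi^{m-1}=1$, which together with $\chi^m=1$ gives $\chi^g=\chi$, i.e.\ $\chi\in\mathcal{U}^G\cap\mathcal{U}[m]=\mathcal{U}_0$. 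For $d=2$, $S'(G,m)=\{\overline{(1,\dots,1)}\}$, so only $\chi^m=1$ is needed, giving $\chi\in\mathcal{U}_0=\mathcal{U}[m]$. Conversely any $\chi\in\mathcal{U}_0$ makes every such product collapse to $\chi^m=1$. Combined with the non-vanishing requirement on the remaining factors, this yields (i).

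For part (ii), the central input is the norm identity: any $\chi\in\mathcal{U}$ is trivial on $\mathbb{A}_K^*$ and $N_{E/K}(\mathbb{A}_E^*)\subseteq\mathbb{A}_K^*$, so $\prod_{g\in G}\chi^g(x)=\chi(N_{E/K}(x))=1$ for all $x\in\mathbb{A}_E^*$. For $\chi\in\mathcal{U}^G$ every twist equals $\chi$ and this product reduces to $\chi^d$, yielding $\mathcal{U}^G\subseteq\mathcal{U}[d]$. In the coprime case, $\chi\in\mathcal{U}_0$ thus satisfies both $\chi^d=1$ and $\chi^m=1$, so B\'ezout gives $\chi=1$, hence $\mathcal{U}_0=\{1\}$. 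When $d$ is prime and $d\mid m$, the inclusion $\mathcal{U}_0\subseteq\mathcal{U}[d]$ is immediate from $\mathcal{U}^G\subseteq\mathcal{U}[d]\subseteq\mathcal{U}[m]$; for the reverse inclusion $\mathcal{U}[d]\subseteq\mathcal{U}^G$, I would exploit that $G=\langle\sigma\rangle$ is cyclic of prime order $d$ and that the norm relation translates in additive notation to $(1+\sigma+\dots+\sigma^{d-1})\chi=0$, i.e.\ $(\sigma-1)^{d-1}\chi=0$ in the $\mathbb{F}_d[G]$-module $\mathcal{U}[d]$, and then invoke the class-field-theoretic description of $\mathcal{U}$ (as a subgroup of $\Gal(E^{\mathrm{ab}}_{S'(\om)}/E)^\wedge$ with compatible $G$-action) to rule out non-trivial Jordan blocks of $\sigma$ on the $d$-torsion and thus force $(\sigma-1)\chi=0$.

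The hardest step is clearly this final structural claim $\mathcal{U}[d]\subseteq\mathcal{U}^G$: the combinatorial pole analysis and the forward direction $\mathcal{U}^G\subseteq\mathcal{U}[d]$ are both elementary, but passing from annihilation by $(\sigma-1)^{d-1}$ to annihilation by $(\sigma-1)$ itself on $\mathcal{U}[d]$ requires exploiting specific features of the norm torus setting, likely via a careful Galois-cohomological argument on the relevant ray class group.
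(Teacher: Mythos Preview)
Your argument for part~(i) is correct and essentially identical to the paper's: both identify the condition $\chi^{g_1}\dotsm\chi^{g_m}=1$ for all $\overline{(g_1,\dots,g_m)}\in S'(G,m)$, specialise to $\overline{(1,\dots,1)}$ and (for $d\geq 3$) $\overline{(g,1,\dots,1)}$, and check the converse. For the forward inclusion $\mathcal{U}_0\subseteq\mathcal{U}[d]$ in part~(ii) you use the global norm identity $\prod_{g\in G}\chi^g=\chi\circ N_{E/K}=1$, whereas the paper argues place by place---using Note~\ref{torhecnote} to get $\prod_{w\mid v}\psi_w(\pi_w)=1$, then $G$-invariance to make all $\psi_w(\pi_w)$ equal, then strong approximation---but these are the same argument in two guises, and the coprime conclusion via B\'ezout is identical.

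The point of divergence is the reverse inclusion $\mathcal{U}[d]\subseteq\mathcal{U}^G$ in the prime $d\mid m$ case. You are right that the norm relation only yields $(\sigma-1)^{d-1}\chi=0$ on $\mathcal{U}[d]$, and that promoting this to $(\sigma-1)\chi=0$ amounts to excluding higher Jordan blocks for $\sigma$ on the $d$-torsion; your sketch invoking the class-field-theoretic description of $\mathcal{U}$ does not actually carry this out. However, the paper's proof does not address this reverse inclusion either: it starts from $\chi\in\mathcal{U}_0$, deduces $\chi^d=1$, handles the coprime case, and stops. So on this point you have simply noticed a gap that the paper leaves open as well. Since the downstream arguments (Proposition~\ref{lcprop} and the proof of Theorem~\ref{mainthm}) use $\mathcal{U}_0$ as defined rather than the putative description $\mathcal{U}_0=\mathcal{U}[d]$, this unproved equality does not affect the main results.
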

\begin{proof} \leavevmode
From Theorem \ref{hlfthm} and Corollary \ref{gftcor}, $\chi \in \mathcal{U}$ contributes to the pole of $Z_m\left(s\right)$ at $s = \frac{1}{m}$ if and only if each factor of $F_{m,\chi}\left(s\right)$ in \eqref{regeq} equals $\zeta_E\left(ms\right)$ and $\prod_{v \mid \infty}\widehat{H}_v\left(1,\chi_v;-\frac{1}{m}\right)G_{m,\chi}\left(\frac{1}{m}\right) \neq 0$. Denoting by $\psi$ the Hecke character associated to $\chi$, this means precisely that $\prod_{v \mid \infty}\widehat{H}_v\left(1,\chi_v;-\frac{1}{m}\right)G_{m,\chi}\left(\frac{1}{m}\right) \neq 0$ and, for each $\overline{\left(g_1,\dots,g_m\right)} \in S'\left(G,m\right)$, we have $\left(\psi^{g_1}\dots\psi^{g_m}\right)_v = 1$ for all $v \nmid \infty$, which is equivalent by strong approximation \cite[Thm.,~p.~67]{ANTCF} to $\psi^{g_1}\dots\psi^{g_m}= 1$. By Note \ref{torhecnote}, this holds if and only if
\begin{equation} \label{conteq}
\chi^{g_1}\dots\chi^{g_m} = 1 \textrm{ for all $\overline{\left(g_1,\dots,g_m\right)} \in S'\left(G,m\right)$.}
\end{equation}
To conclude the first part, it only remains to show that \eqref{conteq} holds if and only if $\chi \in \mathcal{U}_0$. Taking $\overline{\left(g_1,\dots,g_m\right)} = \overline{\left(1,\dots,1\right)}$ in \eqref{conteq}, we obtain $\chi^m =1$. If $d=2$, then $S'\left(G,m\right) =  \left\{\overline{\left(1,\dots,1\right)}\right\}$, and we are done. Otherwise, taking $\overline{\left(g_1,\dots,g_m\right)} = \overline{\left(g,1,\dots,1\right)}$ for any $g \in G$, we obtain $\chi^{m-1}\chi^g = 1$, so $\chi^m = 1$ and $\chi =\chi^g$ for all $g \in G$. Conversely, if $\chi^m = 1$ and $\chi = \chi^g$ for all $g \in G$, then \eqref{conteq} holds.

Let now $d \neq 2$, $\chi \in \mathcal{U}_0$, $v \not\in S'\left(\om\right)$ and $w \mid v$. We have $\psi_w\left(\pi_w\right) = \psi_w^g\left(\pi_w\right) = \psi_{gw}\left(\pi_{gw}\right)$ for all $g \in G$. Since $\prod_{w \mid v} \psi_w\left(\pi_w\right) = 1$ (see Note \ref{torhecnote}) and $G$ acts transitively on the places of $E$ over $v$, we obtain $\psi_w^d\left(\pi_w\right) = 1$, hence $\chi^d = 1$ by strong approximation. On the other hand, $\chi^m = 1$. For $d$ and $m$ coprime, we conclude that $\chi = 1$. \qedhere
\end{proof}
\begin{proposition} \label{lcprop}
The limit
$$
\Omega_m\left(\frac{1}{m}\right) = \lim_{s \rightarrow \frac{1}{m}}\left(s-\frac{1}{m}\right)^{b\left(d,m\right)}\sum_{\chi \in \mathcal{U}_0}\widehat{H}\left(\phi_m,\chi;-s\right)
$$
is non-zero.
\end{proposition}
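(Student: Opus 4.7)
The plan is to factor out the pole coming from $\zeta_E(ms)^{b(d,m)}$ and then use character orthogonality on the finite group $\mathcal{U}_0$ to rewrite the resulting sum as an integral of a non-negative function over a subset of $T\left(\mathbb{A}_K\right)$ containing $\mathcal{K}$.

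First, by Lemma \ref{contriblem}, $F_{m,\chi}\left(s\right) = \zeta_E\left(ms\right)^{b\left(d,m\right)}$ for every $\chi \in \mathcal{U}_0$, so Corollary \ref{gftcor} gives $\widehat{H}\left(\phi_m,\chi;-s\right) = \zeta_E\left(ms\right)^{b\left(d,m\right)} \prod_{v\mid\infty}\widehat{H}_v\left(1,\chi_v;-s\right)\, G_{m,\chi}\left(s\right)$. Passing to the limit,
\[
\Omega_m\left(\tfrac{1}{m}\right) = \left(\tfrac{\Res_{s=1}\zeta_E\left(s\right)}{m}\right)^{b\left(d,m\right)} \sum_{\chi\in\mathcal{U}_0}\Psi\left(\chi\right),
\]
where $\Psi\left(\chi\right) := \prod_{v\mid\infty}\widehat{H}_v\left(1,\chi_v;-\tfrac{1}{m}\right)\, G_{m,\chi}\left(\tfrac{1}{m}\right)$. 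The prefactor is positive, so it suffices to show $\sum_{\chi \in \mathcal{U}_0}\Psi\left(\chi\right) \neq 0$.

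Second, I would identify $\Psi\left(\chi\right)$ with an absolutely convergent adelic integral. Each local factor is a Fourier transform of a non-negative function against $\chi_v$: for $v\mid\infty$ the integrand is $H_v^{-1/m}$, and for $v\nmid\infty$ the integrand is $\phi_{m,v}H_v^{-1/m}$, normalised by $L_v\left(1,1\right)^{-b\left(d,m\right)}$. Using Ono-style convergence factors to produce a genuine Tamagawa-type product measure $\nu$ on $T\left(\mathbb{A}_K\right)$, the convergence of $G_{m,1}\left(\tfrac{1}{m}\right)$ from Corollary \ref{gftcor} lets us write $\Psi\left(\chi\right) = \int_{T\left(\mathbb{A}_K\right)}f\left(t\right)\chi\left(t\right)\, d\nu\left(t\right)$ for a non-negative $\nu$-integrable $f$ with these local factors. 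Since $\mathcal{U}_0$ is a finite group, one may interchange the sum over $\chi$ with the integral; orthogonality on $\mathcal{U}_0$ gives $\sum_{\chi \in \mathcal{U}_0}\chi\left(t\right) = \left|\mathcal{U}_0\right|\mathbf{1}_N\left(t\right)$, where $N = \{t \in T\left(\mathbb{A}_K\right) : \chi\left(t\right) = 1 \; \forall\, \chi \in \mathcal{U}_0\}$, yielding
\[
\sum_{\chi\in\mathcal{U}_0}\Psi\left(\chi\right) = \left|\mathcal{U}_0\right| \int_N f\left(t\right)\, d\nu\left(t\right).
\]

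Third, every $\chi \in \mathcal{U}_0 \subset \mathcal{U}$ is trivial on $\mathcal{K}$ by Definition \ref{kudef}, so $\mathcal{K} \subset N$. By Lemma \ref{mcslem} and Definition \ref{inddef}, $\phi_{m,v}$ is identically $1$ on $\mathcal{K}_v$ for every $v$, while $H_v$ is continuous and positive on the compact set $\mathcal{K}_v$, and equals $1$ there for $v \not\in S\left(\om\right)$. Hence $f > 0$ pointwise on $\mathcal{K}$, and $\mathcal{K}$ has positive $\nu$-measure, so $\int_N f\, d\nu \geq \int_{\mathcal{K}} f\, d\nu > 0$, completing the argument.

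The main obstacle will be the measure-theoretic bookkeeping in the middle step: one must combine the $L_v\left(1,1\right)^{-b\left(d,m\right)}$ renormalisation with appropriate convergence factors so that $\nu = \prod_v \nu_v$ is a bona fide product measure, $f$ is a well-defined $\nu$-integrable function with $\int f\chi\, d\nu = \prod_v \int f_v \chi_v\, d\nu_v$, and the interchange of the finite sum and the integral is fully justified. This parallels the standard Tamagawa-measure construction of Batyrev--Tschinkel, and once it is in place the orthogonality-plus-positivity argument finishes the proof immediately.
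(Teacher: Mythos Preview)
Your orthogonality-plus-positivity strategy is the right one, and it is also what the paper does. However, your final positivity step has a genuine gap: the set $\mathcal{K}$ is too small to detect the pole of order $b(d,m)$. Concretely, on $\mathcal{K}_v$ we have $H_v\equiv 1$ for every $v\not\in S'(\om)$, so for real $s>\tfrac{1}{m}$,
\[
\int_{\mathcal{K}}\frac{1}{H(t)^s}\,d\mu=\prod_v\mu_v(\mathcal{K}_v)
\]
is a finite constant independent of $s$, and multiplying by $(s-\tfrac{1}{m})^{b(d,m)}$ sends it to $0$. Equivalently, in your limit formulation, any product representation $\Psi(\chi)=\int f\chi\,d\nu$ forces the local data $(f_v,\nu_v)$ to satisfy $\int_{\mathcal{K}_v}f_v\,d\nu_v=G_{m,1,v}(\tfrac{1}{m})\big|_{\mathcal{K}_v}=\zeta_{E,v}(1)^{-b(d,m)}$ for almost all $v$, and $\prod_v\zeta_{E,v}(1)^{-b(d,m)}=0$. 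So $\int_{\mathcal{K}}f\,d\nu=0$, and the inequality $\int_N f\,d\nu\geq\int_{\mathcal{K}}f\,d\nu$ yields nothing. (This is also why your ``measure-theoretic bookkeeping'' cannot be made to work: no arrangement of convergence factors produces a product measure $\nu$ with both $\Psi(\chi)=\int f\chi\,d\nu$ and $\nu(\mathcal{K})>0$.)

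The missing idea is that the characters $\chi\in\mathcal{U}_0$ are trivial on a set strictly larger than $\mathcal{K}$, namely on $\{t_v:H_v(t_v)\in\{1,q_v^m\}\}$ for every $v\not\in S'(\om)$; this follows by comparing the first two coefficients in the series expansions of $\widehat{H}_v(\phi_{m,v},\chi_v;-s)$ and $F_{m,\chi,v}(s)=F_{m,1,v}(s)$. The paper therefore works \emph{before} the limit: for real $s>\tfrac{1}{m}$ it applies orthogonality to obtain
\[
\sum_{\chi\in\mathcal{U}_0}\widehat{H}(\phi_m,\chi;-s)=|\mathcal{U}_0|\int_{T(\mathbb{A}_K)^{\mathcal{U}_0,\phi_m}}H(t)^{-s}\,d\mu
\geq|\mathcal{U}_0|\,\widehat{H}(\theta_m,1;-s),
\]
where $\theta_m$ is the indicator of this larger set. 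The crucial point is that $\widehat{H}_v(\theta_{m,v},1;-s)=1+a_{1,v,m}q_v^{-ms}$ for $v\not\in S'(\om)$, so $\widehat{H}(\theta_m,1;-s)$ regularises against $\zeta_E(ms)^{b(d,m)}$ exactly as $\widehat{H}(\phi_m,1;-s)$ does, giving a pole of order $b(d,m)$ with nonzero residue. Letting $s\to\tfrac{1}{m}^+$ then gives the positive lower bound. To repair your argument, replace $\mathcal{K}$ by $\{\theta_m=1\}$ and carry out the comparison for $s>\tfrac{1}{m}$ rather than after passing to the limit.
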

\begin{proof}
We have
$$
\sum_{\chi \in \mathcal{U}_0}\widehat{H}\left(\phi_m,\chi;-s\right) = \sum_{\chi \in \mathcal{U}_0}\int_{T\left(\mathbb{A}_K\right)}\frac{\phi_m\left(t\right)\chi\left(t\right)}{H\left(t\right)^s}d\mu = \int_{T\left(\mathbb{A}_K\right)}\frac{\phi_m\left(t\right)}{H\left(t\right)^s} \sum_{\chi \in \mathcal{U}_0} \chi\left(t\right) d\mu.
$$
Let $t \in T\left(\mathbb{A}_K\right)$. Note that, if there exists $\chi' \in \mathcal{U}_0$ with $\chi'\left(t\right) \neq 1$, then
$$
\sum_{\chi \in \mathcal{U}_0}\chi\left(t\right) = \sum_{\chi \in \mathcal{U}_0} \chi\chi'\left(t\right) = \chi'\left(t\right)\sum_{\chi \in \mathcal{U}_0}\chi\left(t\right),
$$
so $\sum_{\chi \in \mathcal{U}_0}\chi\left(t\right) = 0$.
Then we have
$$
\begin{aligned}
\sum_{\chi \in \mathcal{U}_0}\widehat{H}\left(\phi_m,\chi;-s\right) = |\mathcal{U}_0|\int_{T\left(\mathbb{A}_K\right)^{\mathcal{U}_0,\phi_m}}\frac{1}{H\left(t\right)^s}d\mu,
\end{aligned}
$$
where
$$
T\left(\mathbb{A}_K\right)^{\mathcal{U}_0, \phi_m} = \{t \in T\left(\mathbb{A}_K\right) : \phi_m\left(t\right) = \chi\left(t\right) = 1 \textrm { for all } \chi \in \mathcal{U}_0\}.
$$
For any $\chi \in \mathcal{U}_0$ and non-archimedean place $v \not\in S'\left(\om\right)$, comparing the series expressions of $\widehat{H}_v\left(\phi_{m,v},\chi_v;-s\right)$ and $F_{m,\chi,v}\left(s\right) = F_{m,1,v}\left(s\right)$ in Definition \ref{pseriesdef}, we see that
$$
\int_{H_v\left(t_v\right) = 1}\chi_v\left(t_v\right)d\mu_v = \int_{H_v\left(t_v\right) = 1} d\mu_v, \quad
\int_{H_v\left(t_v\right) = q_v^m}\chi_v\left(t_v\right)d\mu_v = \int_{H_v\left(t_v\right) = q_v^m} d\mu_v,
$$
so $\chi_v\left(t_v\right) = 1$ for all $\chi \in \mathcal{U}_0$ whenever $H_v\left(t_v\right) = 1$ or $H_v\left(t_v\right) = q_v^m$.

For each place $v \not\in S'\left(\om\right)$, define the continuous function
$$
\theta_{m,v} : T\left(K_v\right) \rightarrow \{0,1\}, \quad
t_v \mapsto
\begin{cases}
1 \textrm{ if } H'_v\left(t_v\right) = 1 \textrm{ or } H'_v\left(t_v\right) = q_v^m, \\
0 \textrm { otherwise}. 
\end{cases}
$$
Letting $\theta_{m,v}$ be the indicator function of $\mathcal{K}_v$ for $v \in S'\left(\om\right)$, we define the function
$$
\theta_m : T\left(\mathbb{A}_K\right) \rightarrow \{0,1\}, \quad
\theta_m\left(\left(t_v\right)_v\right) = \prod_{v \in \Val\left(K\right)} \theta_{m,v}\left(t_v\right).
$$
By the above, we deduce that $T\left(\mathbb{A}_K\right)^{\theta_m} \subset T\left(\mathbb{A}_K\right)^{\mathcal{U}_0, \phi_m}$, where
$$
T\left(\mathbb{A}_K\right)^{\theta_m} = \{t \in T\left(\mathbb{A}_K\right) : \theta_m\left(t\right) = 1\}.
$$
Then, by comparing limits along the real line, we see that it suffices to prove that
$$
\lim_{s \rightarrow \frac{1}{m}}\left(s-\frac{1}{m}\right)^{b\left(d,m\right)}\widehat{H}\left(\theta_m,1;-s\right) \neq 0.
$$
It is easily seen that for any non-archimedean place $v \not\in S'\left(\om\right)$, we have
$$
\widehat{H}_v\left(\theta_{m,v},1;-s\right) = 1 + \frac{a_{1,v,m}}{q_v^{ms}}
$$
for $a_{\chi,v,n}$ as in Definition \ref{pseriesdef}, so, as in Corollary \ref{gftcor}, we may deduce that
$$
\widehat{H}\left(\theta_m,1;-s\right) = \zeta_E\left(ms\right)^{b\left(d,m\right)}G_m\left(s\right)
$$
for $G_m\left(s\right)$ a function holomorphic on $\Re s \geq \frac{1}{m}$. It also follows that $G_m\left(\frac{1}{m}\right) \neq 0$, since $\widehat{H}_v\left(\theta_{m,v},1;-\frac{1}{m}\right) \neq 0$ analogously to Lemma \ref{nontrivlem}. Then the result follows.
\end{proof}
\begin{corollary}
We have
$$
\Omega_m\left(\frac{1}{m}\right) = \frac{\Res_{s=1}\zeta_K\left(s\right)}{d\Res_{s=1}\zeta_E\left(s\right)} \lim_{s \rightarrow \frac{1}{m}}\left(s-\frac{1}{m}\right)^{b\left(d,m\right)}\sum_{\chi \in \mathcal{U}_0}\widehat{H}\left(\phi_m,\chi;-s\right) \neq 0.
$$
\end{corollary}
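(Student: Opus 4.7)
The plan is to combine the Poisson-summation expression for $Z_m(s)$ with the character-theoretic reduction from Lemma \ref{contriblem} and the non-vanishing result of Proposition \ref{lcprop}. First, I would start from equation \eqref{psfeq}, which expresses
$$
Z_m(s) = \frac{\Res_{s=1}\zeta_K(s)}{d\,\Res_{s=1}\zeta_E(s)} \sum_{\chi \in \mathcal{U}} \widehat{H}(\phi_m,\chi;-s)
$$
for $\Re s > \frac{1}{m}$. By the uniform convergence established in the proof of Theorem \ref{zetathm}, the function $\left(s-\tfrac{1}{m}\right)^{b(d,m)} \sum_{\chi \in \mathcal{U}} \widehat{H}(\phi_m,\chi;-s)$ extends continuously to $\Re s = \tfrac{1}{m}$, so multiplying through by $\left(s-\tfrac{1}{m}\right)^{b(d,m)}$ and letting $s \to \tfrac{1}{m}$ yields
$$
\Omega_m\!\left(\tfrac{1}{m}\right) = \frac{\Res_{s=1}\zeta_K(s)}{d\,\Res_{s=1}\zeta_E(s)} \lim_{s \to \frac{1}{m}} \left(s-\tfrac{1}{m}\right)^{b(d,m)} \sum_{\chi \in \mathcal{U}} \widehat{H}(\phi_m,\chi;-s).
$$

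Next, I would split the sum over $\mathcal{U}$ into the contribution from $\mathcal{U}_0$ and the complement. By Lemma \ref{contriblem}(i), any character $\chi \in \mathcal{U} \setminus \mathcal{U}_0$ fails to make every $L$-factor in the product $F_{m,\chi}(s)$ coincide with $\zeta_E(ms)$; by Theorem \ref{hlfthm} the remaining Hecke $L$-functions are holomorphic at $s=\tfrac{1}{m}$, so $\widehat{H}(\phi_m,\chi;-s)$ has a pole of order strictly less than $b(d,m)$ at $s=\tfrac{1}{m}$ by Corollary \ref{gftcor}. Since the absolute convergence of the tail (uniform in $s$ on a compact neighbourhood of $\tfrac{1}{m}$) was established in the proof of Theorem \ref{zetathm}, the contribution from $\mathcal{U} \setminus \mathcal{U}_0$ vanishes in the limit, and we obtain
$$
\Omega_m\!\left(\tfrac{1}{m}\right) = \frac{\Res_{s=1}\zeta_K(s)}{d\,\Res_{s=1}\zeta_E(s)} \lim_{s \to \frac{1}{m}} \left(s-\tfrac{1}{m}\right)^{b(d,m)} \sum_{\chi \in \mathcal{U}_0} \widehat{H}(\phi_m,\chi;-s).
$$

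Finally, the non-vanishing of this limit is exactly the content of Proposition \ref{lcprop}, while $\Res_{s=1}\zeta_K(s)$ and $\Res_{s=1}\zeta_E(s)$ are both non-zero (as the Dedekind zeta functions have simple poles at $s=1$). Hence $\Omega_m(\tfrac{1}{m}) \neq 0$, as required. The only step requiring care is the interchange of limit and infinite character sum in discarding $\mathcal{U}\setminus\mathcal{U}_0$; this is where the uniform bound on $\sum_{\chi \in \mathcal{U}} F_{m,\chi}(s)(s-\tfrac{1}{m})^{b(d,m)}$ from the proof of Theorem \ref{zetathm}, together with Lemma \ref{latsumlem}, is essential, since it legitimises passing the limit inside the sum.
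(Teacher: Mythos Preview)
Your argument is correct and matches exactly the approach implicit in the paper: the corollary is stated there without proof because it is an immediate consequence of combining the Poisson formula \eqref{psfeq}, the uniform convergence established in Theorem \ref{zetathm}, the identification of contributing characters in Lemma \ref{contriblem}, and the non-vanishing of Proposition \ref{lcprop}. Your write-up simply spells out these deductions, including the justification for interchanging the limit with the infinite sum over $\mathcal{U}$, which is indeed the only point requiring care.
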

\begin{proof} [Proof of Theorem \ref{mainthm}]
Since $\Omega_m\left(\frac{1}{m}\right) \neq 0$, the result for $S = S\left(\om\right)$ follows from \cite[Thm.~3.3.2,~p.~624]{RPBH} and Theorem \ref{zetathm}, taking $c\left(\om,m,S\left(\om\right)\right)$ to be
\[
\frac{m\Res_{s=1}\zeta_K\left(s\right)}{\left(b\left(d,m\right)-1\right)!d \Res_{s=1}\zeta_E\left(s\right)}\lim_{s \rightarrow \frac{1}{m}}\left(s-\frac{1}{m}\right)^{b\left(d,m\right)}\sum_{\chi \in \mathcal{U}_0}\widehat{H}\left(\phi_m,\chi;-s\right). 
\]
The result for $S \supset S\left(\om\right)$ follows analogously upon redefining $\phi_{m,v}$ to be identically $1$ for each $v \in S \setminus S\left(\om\right)$ in Definition \ref{inddef}.
\end{proof}
\section{Campana points} \label{sectionCP}
In this section we prove Theorem \ref{campthm}. We will be brief when the argument is largely similar to the case of weak Campana points, emphasising only the key differences. Fix a Galois extension $E/K$ of number fields with $K$-basis $\om = \{\omega_0,\dots,\omega_{d-1}\}$, let $m \in \mathbb{Z}_{\geq 2}$ and set $T = T_{\om}$ as in Section \ref{sectionTNT}. 
\begin{mydef} \label{campdef}
For each non-archimedean place $v \not\in S\left(\om\right)$, let
$$
N_{\om}\left(x\right) = \prod_{w \mid v} f_w\left(x\right)
$$
denote the $v$-adic decomposition of the norm form $N_{\om}$ associated to $\om$ into irreducible polynomials $f_w\left(x\right) \in \mathcal{O}_v[x]$.
For each $w \mid v$, we define the functions
$$
\widetilde{H}_w: T\left(K_v\right) \rightarrow \mathbb{R}_{>0}, \quad
x \mapsto \frac{\max\{|x_i|_v^{\degree f_w}\}}{|f_w\left(x\right)|_v},
$$
$$
\psi_{m,w}: T\left(K_v\right) \rightarrow \{0,1\}, \quad
t_v \mapsto
\begin{cases}
1 \textrm{ if } \widetilde{H}_w\left(t_v\right) = 1 \textrm{ or } \widetilde{H}_w\left(t_v\right) \geq q_v^m, \\
0 \textrm { otherwise}. 
\end{cases}
$$
We then define the \emph{Campana local indicator function}
$$
\psi_{m,v}: T\left(K_v\right) \rightarrow \{0,1\}, \quad
t_v \mapsto \prod_{w \mid v}\psi_{m,w}\left(t_v\right).
$$
Setting $\psi_{m,v} = 1$ for $v \in S\left(\om\right)$, we then define the \emph{Campana indicator function}
$$
\psi_m : T\left(\mathbb{A}_K\right) \rightarrow \{0,1\}, \quad
\left(t_v\right)_v \mapsto \prod_{v \in \Val\left(K\right)} \psi_{m,v}\left(t_v\right).
$$
If $v \not\in S'\left(\om\right)$, then for each $w \mid v$, we also define the function
$$
\sigma_{m,w}: T\left(K_v\right) \rightarrow \{0,1\},  \quad
t_v \mapsto
\begin{cases}
1 \textrm{ if } \widetilde{H}_w\left(t_v\right) = 1 \textrm{ or } \widetilde{H}_w\left(t_v\right) = q_v^m, \\
0 \textrm { otherwise}
\end{cases}
$$
and we define
$$
\sigma_{m,v}: T\left(K_v\right) \rightarrow \{0,1\}, \quad
t_v \mapsto \prod_{w \mid v}\sigma_{m,w}\left(t_v\right).
$$
Letting $\sigma_{m,v}$ be the indicator function for $\mathcal{K}_v$ for $v \in S'\left(\om\right)$, we define the function
$$
\sigma_m : T\left(\mathbb{A}_K\right) \rightarrow \{0,1\}, \quad
\left(t_v\right)_v \mapsto \prod_{v \in \Val\left(K\right)} \sigma_{m,v}\left(t_v\right).
$$
\end{mydef}
\begin{lemma}
The Campana $\mathcal{O}_{K,S\left(\om\right)}$-points of $\left(\mathbb{P}^{d-1}_K,\Delta^{\om}_m\right)$ are precisely the rational points $t \in T\left(K\right)$ such that $\psi_m\left(t\right) = 1$.
\end{lemma}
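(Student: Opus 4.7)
The plan is to reduce the Campana condition from Section \ref{subsectionCP} to the explicit condition cut out by $\psi_m$, parallelling Lemma \ref{identlem}. Since $\Delta_m^{\om}$ has the single prime component $D = Z\left(N_{\om}\right)$ with weight $\epsilon = 1 - \frac{1}{m}$ (so $m_\alpha = m$) and $\epsilon \neq 1$, clause (i) in the definition of Campana points is vacuous, and one only needs to translate clause (ii) at each $v \not\in S\left(\om\right)$ and each $v$-adic component $\mathcal{D}_{\alpha_w}$.

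First, I would identify the $v$-adic irreducible components of $\mathcal{D}_{\textrm{red}}$. By Remark \ref{somrem}, the Zariski closure of $Z\left(N_{\om}\right)$ in the fixed model is $\Proj \mathcal{O}_{K,S\left(\om\right)}[x_0,\dots,x_{d-1}]/\left(N_{\om}\right)$, and for $v \not\in S\left(\om\right)$ the factorisation $N_{\om} = \prod_{w \mid v} f_w$ in $\mathcal{O}_v[x]$ identifies the $v$-adic irreducible components with $\mathcal{D}_{\alpha_w} = Z\left(f_w\right)$, indexed precisely by the places $w \mid v$ of $E$. Next, for $P \in T\left(K\right)$, I would pick $v$-adically primitive coordinates $\left(t_0,\dots,t_{d-1}\right) \in \mathcal{O}_v^d$ with $\max_i |t_i|_v = 1$. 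Because $P \in T$, we have $N_{\om}\left(t\right) \neq 0$, hence $f_w\left(t\right) \neq 0$ for each $w \mid v$, so the reduction $\mathcal{P}_v$ does not lie in $\mathcal{D}_{\alpha_w}$ and the pullback ideal colength gives
$$
n_v\left(\mathcal{D}_{\alpha_w},P\right) = v\left(f_w\left(t_0,\dots,t_{d-1}\right)\right).
$$

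Finally, using primitivity of the coordinates together with the homogeneity of $f_w$ of degree $\degree f_w$, I would compute
$$
\widetilde{H}_w\left(P\right) = \frac{1}{|f_w\left(t\right)|_v} = q_v^{v\left(f_w\left(t\right)\right)} = q_v^{n_v\left(\mathcal{D}_{\alpha_w},P\right)}.
$$
Therefore $\psi_{m,w}\left(P\right) = 1$ if and only if $n_v\left(\mathcal{D}_{\alpha_w},P\right) \in \{0\} \cup \mathbb{Z}_{\geq m}$, which is exactly clause (ii) of the Campana condition at $\mathcal{D}_{\alpha_w}$. Taking the product over all $w \mid v$ yields $\psi_{m,v}\left(P\right) = 1$ iff the Campana condition holds at $v$, and the product over $v \not\in S\left(\om\right)$ gives the result (the factors for $v \in S\left(\om\right)$ being identically $1$ by construction). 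There is no substantial obstacle: the only delicate point is checking that the primitive-coordinate normalisation matches the pullback-ideal definition of $n_v\left(\mathcal{D}_{\alpha_w},P\right)$, which is immediate from the fact that $\widetilde{H}_w$ is projectively well-defined and $f_w \in \mathcal{O}_v[x]$.
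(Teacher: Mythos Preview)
Your proposal is correct and follows essentially the same argument as the paper. The paper's proof is very terse---it simply picks primitive $\mathcal{O}_v$-coordinates as in Lemma \ref{identlem} and writes the single displayed identity $\widetilde{H}_w(t) = q_v^{v(f_w(t))} = q_v^{n_v(\mathcal{Z}(f_w),t)}$---while you spell out more of the surrounding logic (that clause (i) is vacuous, that the $v$-adic irreducible components of $\mathcal{D}_{\textrm{red}}$ are the $Z(f_w)$ indexed by $w \mid v$, and that the product over $w$ and $v$ recovers $\psi_m$), but the substance is identical.
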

\begin{proof}
Taking coordinates $t_0,\dots,t_{d-1}$ as in the proof of Lemma \ref{identlem}, we have
$$
\widetilde{H}_w\left(t\right) = \frac{1}{|f_w\left(t_0,\dots,t_{d-1}\right)|_v}= q_v^{v\left(f_w\left(t_0,\dots,t_{d-1}\right)\right)} = q_v^{n_{\alpha_w}\left(\mathcal{Z}\left(f_w\right),t\right)}
$$
for all non-archimedean places $v \not\in S\left(\om\right)$ and places $w \mid v$, where $\mathcal{Z}\left(f_w\right)$ denotes the Zariski closure of $Z\left(f_w\right)$ in $\mathbb{P}^{d-1}_{\mathcal{O}_{K,S\left(\om\right)}}$.
\end{proof}
\begin{lemma}
For all $v \in \Val\left(K\right)$, the function $\psi_{m,v}$ is $\mathcal{K}_v$-invariant and $1$ on $\mathcal{K}_v$.
\end{lemma}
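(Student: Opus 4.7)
For $v \in S(\om)$ the statement is immediate since $\psi_{m,v} \equiv 1$ and $\mathcal{K}_v = T(\mathcal{O}_v)$ already appears in the definition. Fix therefore a non-archimedean $v \notin S(\om)$. The plan is to show that each factor $\widetilde{H}_w$, for $w \mid v$, is $\mathcal{K}_v$-invariant and equal to $1$ on $\mathcal{K}_v$; both conclusions of the lemma then follow from the definition of $\psi_{m,v} = \prod_{w \mid v} \psi_{m,w}$.

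The first step is the factorisation
$$
H'_v(x) = \prod_{w \mid v} \widetilde{H}_w(x).
$$
Indeed, choosing coordinates with $\max_i |x_i|_v = M$ gives $\max\{|x_i|_v^d\} = M^d = \prod_w M^{\deg f_w}$ since $\sum_w \deg f_w = d$, and $|N_\om(x)|_v = \prod_w |f_w(x)|_v$ by Definition \ref{campdef}. Since $f_w \in \mathcal{O}_v[x]$, the argument of Remark \ref{lbrem} applied to each $f_w$ shows $\widetilde{H}_w(x) \geq 1$ for every $w \mid v$. By Lemma \ref{mcslem} we have $H'_v = 1$ on $\mathcal{K}_v$, so each factor $\widetilde{H}_w$ must equal $1$ on $\mathcal{K}_v$, and hence $\psi_{m,w} \equiv 1$ on $\mathcal{K}_v$ as well.

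For invariance, the key ingredient is sub-multiplicativity
$$
\widetilde{H}_w(x \cdot y) \leq \widetilde{H}_w(x) \widetilde{H}_w(y).
$$
The numerator is controlled exactly as in Lemma \ref{smlem}: expressing $x \cdot y$ through the structure constants $a^{ij}_k$ of $\om$, which lie in $\mathcal{O}_v$ because $v \notin S(\om)$, gives $\max_k |(x \cdot y)_k|_v^{\deg f_w} \leq \max_i |x_i|_v^{\deg f_w} \cdot \max_j |y_j|_v^{\deg f_w}$. For the denominator, the main observation is that under the isomorphism $E \otimes_K K_v \cong \prod_{w \mid v} E_w$ coming from Note \ref{torhecnote}, the polynomial $f_w$ corresponds to $N_{E_w/K_v}$ applied to the $w$-component of $\sum_i x_i \omega_i$, and the multiplication on $T(K_v)$ acts componentwise; hence $f_w$ is strictly multiplicative, $|f_w(x \cdot y)|_v = |f_w(x)|_v |f_w(y)|_v$. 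Combining the two bounds yields the desired sub-multiplicativity.

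Given sub-multiplicativity, invariance follows by a sandwich argument identical to the one in Corollary \ref{invcor}. For $k \in \mathcal{K}_v$ and $t \in T(K_v)$, we have $\widetilde{H}_w(k) = 1$ by the second paragraph, and $k^{-1} \in \mathcal{K}_v$ by Lemma \ref{mplem}, so $\widetilde{H}_w(k^{-1}) = 1$ as well; then
$$
\widetilde{H}_w(k t) \leq \widetilde{H}_w(k) \widetilde{H}_w(t) = \widetilde{H}_w(t) = \widetilde{H}_w(k^{-1}(k t)) \leq \widetilde{H}_w(k t).
$$
Thus $\widetilde{H}_w$, and therefore $\psi_{m,w}$, is $\mathcal{K}_v$-invariant. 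Taking the product over $w \mid v$ concludes the proof. The main subtlety is justifying the multiplicativity of the individual $f_w$, which requires identifying them explicitly with local norm factors of $N_\om$ under the decomposition $E \otimes_K K_v \cong \prod_{w \mid v} E_w$; once this is in hand, everything else parallels the corresponding statements for $H'_v$ proved in Section \ref{section:IS}.
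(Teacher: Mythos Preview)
Your proof is correct and follows essentially the same route as the paper's: factor $H'_v = \prod_{w\mid v}\widetilde{H}_w$, use $\widetilde{H}_w \geq 1$ together with $H'_v(\mathcal{K}_v)=1$ to get $\widetilde{H}_w(\mathcal{K}_v)=1$, establish sub-multiplicativity of each $\widetilde{H}_w$ via multiplicativity of $f_w$, and then run the sandwich argument of Corollary~\ref{invcor}. The paper simply asserts $f_w(x\cdot y)=f_w(x)f_w(y)$, whereas you justify it by identifying $f_w$ with the local norm $N_{E_w/K_v}$ under $E\otimes_K K_v\cong\prod_{w\mid v}E_w$; this is a welcome clarification but not a different idea. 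One minor point: the fact that $H'_v=1$ on $\mathcal{K}_v$ is the \emph{definition} of $\mathcal{K}_v$ (Lemma~\ref{mplem}), not a consequence of Lemma~\ref{mcslem}.
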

\begin{proof}
For $v \in S\left(\om\right)$ the result is trivial, so let $v \not\in S\left(\om\right)$ and $w \mid v$. Since $f_w\left(x \cdot y\right) = f_w\left(x\right) f_w\left(y\right)$ for $x,y \in L$, it follows as in the proof of Lemma \ref{smlem} that $\widetilde{H}_w\left(x \cdot y\right) \leq \widetilde{H}_w\left(x\right)\widetilde{H}_w\left(y\right)$ for all $x,y \in T\left(K_v\right)$. Since $H_v' = \prod_{w \mid v}\widetilde{H}_w$, we have $\widetilde{H}_w\left(\mathcal{K}_v\right) = 1$ for all $w \mid v$, hence it follows as in the proof of Corollary \ref{invcor} that $\widetilde{H}_w$ and $\psi_{m,w}$ are $\mathcal{K}_v$-invariant. Since $\psi_{m,v}\left(1\right) = 1$, we deduce that $\psi_{m,v}\left(\mathcal{K}_v\right) = 1$.
\end{proof}
\begin{proposition} \label{btheightprop}
Given $v \not\in S'\left(\om\right)$ and $t_v \in T\left(K_v\right)$, the image of $t_v$ in $X_*\left(T_v\right)$ is 
$$
\sum_{w \mid v} \frac{\log_{q_v}\left(\widetilde{H}_w\left(t_v\right)\right)}{\deg f_w}n_w,
$$
with $n_w$ defined as in Proposition \ref{cocharprop}.
\end{proposition}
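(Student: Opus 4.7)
The plan is to derive this directly from Proposition \ref{cocharprop} by relating $\widetilde{H}_w(t_v)$ to the valuation $v(f_w(t_v))$ appearing there. The key observation is that $v \not\in S'(\om)$, so by Definition \ref{sdef} the extension $E/K$ is unramified at $v$, hence the ramification index $e_v$ equals $1$ and the maps $\deg_{T,v}$ and $\deg_{T,E,v}$ coincide on $T(K_v)$.

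First I would choose a set of $v$-integral projective coordinates $(x_0,\dots,x_{d-1})$ for $t_v$ with $\max\{|x_i|_v\} = 1$ (i.e., at least one $x_i \in \mathcal{O}_v^*$). Then I would verify that $\widetilde{H}_w$ is well-defined on $T(K_v)$: since $f_w$ is homogeneous of degree $\deg f_w$, scaling $(x_0,\dots,x_{d-1})$ by $\lambda \in K_v^*$ multiplies both the numerator and denominator of $\widetilde{H}_w$ by $|\lambda|_v^{\deg f_w}$. With the normalised coordinates above, one has
$$
\widetilde{H}_w(t_v) = \frac{1}{|f_w(t_v)|_v} = q_v^{v(f_w(t_v))},
$$
so $\log_{q_v}(\widetilde{H}_w(t_v)) = v(f_w(t_v))$.

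Finally, substituting this into the formula from Proposition \ref{cocharprop} (with $e_v = 1$) gives
$$
\deg_{T,v}(t_v) = \deg_{T,E,v}(t_v) = \sum_{w \mid v} \frac{v(f_w(t_v))}{\deg f_w} n_w = \sum_{w \mid v} \frac{\log_{q_v}(\widetilde{H}_w(t_v))}{\deg f_w} n_w,
$$
which is the claimed image in $X_*(T_v)$.

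There is no real obstacle here; the only mild subtlety is ensuring one tracks the normalisation of the coordinates correctly, since $\widetilde{H}_w$ is defined via a ratio involving $\max\{|x_i|_v^{\deg f_w}\}$ rather than being written intrinsically in terms of the valuation. Once this is settled, the proposition follows immediately from the earlier cocharacter computation.
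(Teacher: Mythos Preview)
Your proposal is correct and is exactly the approach the paper takes: its proof reads in full ``Follows from Proposition \ref{cocharprop}.'' You have simply spelled out the implicit details---normalising coordinates so that $\max\{|x_i|_v\}=1$, identifying $\log_{q_v}\widetilde{H}_w(t_v)$ with $v(f_w(t_v))$, and using $e_v=1$ since $v\notin S'(\om)$---which is precisely what is needed.
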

\begin{proof}
Follows from Proposition \ref{cocharprop}.
\end{proof}
\begin{corollary} \label{lftcampcor}
For $v \not\in S'\left(\om\right)$ non-archimedean with $q_v$ sufficiently large, $\chi$ an automorphic character of $T$ unramified at $v$ and $s \in \mathbb{C}$ with $\Re s > 0$, we have
$$
\widehat{H}_v \left(\psi_{m,v},\chi_v;-s\right) = 1 + \frac{1}{q_v^{ms}}\sum_{\substack{w \mid v \\ \deg f_w \mid m}} \chi_w\left(\pi_w\right)^m + O\left(\frac{1}{q_v^{\left(m+1\right)s}}\right).
$$
\end{corollary}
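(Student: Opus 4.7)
The plan is to compute $\widehat{H}_v(\psi_{m,v},\chi_v;-s)$ explicitly by collapsing the local Fourier integral into a sum over the cocharacter lattice and then reading off the first two terms of its $q_v^{-s}$-expansion. Since $v \not\in S'(\om)$ and $\chi$ is unramified at $v$, each of $\psi_{m,v}$, $\chi_v$, and $H_v$ is $T(\mathcal{O}_v)$-invariant (by Lemma \ref{mcslem} for the first, the hypothesis on $\chi$ for the second, and the height construction for the third). Normalising so that $\mu_v(T(\mathcal{O}_v))=1$ (which one verifies by matching the direct lattice formula for $\widehat{H}_v(1,1;-s)$ with the closed form in Lemma \ref{lftlem}), the Fourier transform rewrites as a sum over $T(K_v)/T(\mathcal{O}_v)\cong X_*(T_v)$ via Lemma \ref{lplem}(i).

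To unpack that sum, I would combine Proposition \ref{btheightprop} with the structural description of $X_*(T_v)$ extracted in the proof of Proposition \ref{cocharprop}: the lattice $X_*(T_v)$ is generated by the $n_w$ subject to the single relation $\sum_w n_w = 0$. Every $\eta \in X_*(T_v)$ thus admits a representative $\sum_w a_w n_w$ with $a_w \in \mathbb{Z}_{\geq 0}$ and $\min_w a_w = 0$. For such a representative one has $\widetilde{H}_w(\eta) = q_v^{a_w \deg f_w}$, hence $H_v(\eta) = q_v^{\sum_w a_w \deg f_w}$, and $\chi_v(\eta) = \prod_w \chi_w(\pi_w)^{a_w}$; this last expression is independent of the shift $(a_w) \sim (a_w+k)$ precisely because $\prod_{w \mid v}\chi_w(\pi_w)=1$ by Note \ref{torhecnote}. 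The Campana condition $\psi_{m,v}(\eta)=1$ becomes the clean combinatorial requirement: for each $w \mid v$, either $a_w=0$ or $a_w\deg f_w \geq m$.

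The last step is to isolate the terms of weight at most $m$ in $\sum_w a_w \deg f_w$. The zero tuple gives the constant term $1$. The only admissible nonzero tuples of weight exactly $m$ are those with a single nonzero coordinate $a_{w_0}$ for which $a_{w_0}\deg f_{w_0}=m$, and this requires $\deg f_{w_0} \mid m$; summing the resulting characters over such $w_0$ delivers the middle term in the claimed formula. Every other admissible nonzero tuple carries weight at least $m+1$: two or more active coordinates force weight at least $2m$, while a single active $w_0$ with $\deg f_{w_0}\nmid m$ produces weight $\deg f_{w_0}\lceil m/\deg f_{w_0}\rceil \geq m+1$. Uniform absolute convergence of the remaining tail (and hence its $O(q_v^{-(m+1)s})$ bound for $q_v$ large) follows the pattern of Lemma \ref{aclem}, applied to $\widehat{H}_v(1,1;-s)$ as a majorant.

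The only real nuisance in the argument is the bookkeeping associated with the shift ambiguity $(a_w) \sim (a_w+k)$ coming from $\sum_w n_w = 0$: without it, one would overcount the contributions, and without the automorphy identity $\prod_{w \mid v}\chi_w(\pi_w)=1$ the character values on $X_*(T_v)$ would not be well defined. So the step that has to be carried out carefully is the parametrisation of $X_*(T_v)$, after which the corollary reduces to enumeration of nonnegative integer solutions with a weighted minimality constraint.
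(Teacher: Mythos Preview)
Your approach is essentially the paper's: both collapse the integral to a sum over $X_*(T_v)$ via the $T(\mathcal{O}_v)$-invariance, parametrise cocharacters by tuples $(\alpha_w)_{w\mid v}$ with $\min_w\alpha_w=0$, translate the Campana condition into $\alpha_w\in\{0\}\cup\mathbb{Z}_{\geq m/d_w}$, and read off the coefficients of $q_v^{-rs}$ for $r\leq m$, bounding the tail by a crude count.

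There is one gap. Your character formula $\chi_v(\eta)=\prod_w\chi_w(\pi_w)^{a_w}$ is correct (it follows from the Hecke interpretation: the $w$-valuation of the lift of $t_v$ to $E_w^*$ equals $v(f_w(t_v))/d_w=\alpha_w$), but applying it to the single-active tuple with $a_{w_0}=m/\deg f_{w_0}$ produces $\chi_{w_0}(\pi_{w_0})^{m/\deg f_{w_0}}$, not $\chi_{w_0}(\pi_{w_0})^{m}$ as in the stated middle term. You assert that the sum ``delivers the middle term in the claimed formula'' without closing this exponent discrepancy. It does close, but only under the standing hypotheses of the paper ($E/K$ Galois and $\gcd(d,m)=1$ when $d$ is not prime): if $\deg f_{w_0}>1$ divides $m$ then $d$ must be prime, so $v$ is inert, $w_0$ is the unique place above $v$, and $\chi_{w_0}(\pi_{w_0})=\prod_{w\mid v}\chi_w(\pi_w)=1$ by Note~\ref{torhecnote}; in every other contributing case $\deg f_{w_0}=1$ and the exponents already agree. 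You should say this. (The paper's proof instead writes $\chi_v(n_v)=\prod_w\chi_w(\pi_w)^{d_w\alpha_w}$, which lands on exponent $m$ directly; that displayed exponent does not match the Hecke computation you set up, but the discrepancy is harmless for exactly the reason just given.)
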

\begin{proof}
Since $\chi_v$, $H_v$ and $\psi_{m,v}$ are $T\left(\mathcal{O}_v\right)$-invariant and $v \not\in S'\left(\om\right)$, we have
$$
\begin{aligned}
\widehat{H}_v \left(\psi_{m,v},\chi_v;-s\right) = \int_{T\left(K_v\right)}\frac{\psi_{m,v}\left(t_v\right)\chi_v\left(t_v\right)}{H_v\left(t_v\right)^s}d\mu_v = \sum_{\overline{t_v} \in T\left(K_v\right)/T\left(\mathcal{O}_v\right)}\frac{\psi_{m,v}\left(\overline{t_v}\right)\chi_v\left(\overline{t_v}\right)}{H_v\left(\overline{t_v}\right)^s} & \\
= \sum_{n_v \in X_*\left(T_v\right)} \frac{\psi_{m,v}\left(n_v\right)\chi_v\left(n_v\right)}{e^{\varphi_{\Sigma}\left(n_v\right) s\log q_v}} = \sum_{r=0}^{\infty} \frac{\gamma_{\chi,v,r}}{q_v^{rs}} &,
\end{aligned}
$$
where
$$
\gamma_{\chi,v,r} = \sum_{\substack{n_v \in X_*\left(T_v\right) \\ H_v\left(n_v\right) = q_v^r}} \psi_{m,v}\left(n_v\right)\chi_v\left(n_v\right).
$$
Put $d_w = \degree f_w$ and let $n_v = \sum_{w \mid v} \alpha_w n_w \in X_*\left(T_v\right)$ with $\min_w\{\alpha_w\} = 0$. By Proposition \ref{cocharprop} and Note \ref{torhecnote}, we have
\[
\begin{aligned}
\log_{q_v} H_v\left(n_v\right) & = \sum_{w \mid v} d_w \alpha_w, \quad
\chi_v\left(n_v\right) = \prod_{w \mid v}\chi_w\left(\pi_w\right)^{d_w \alpha_w}, \\
\psi_{m,v}\left(n_v\right) & =
\begin{cases}
1 \textrm{ if } \alpha_w = 0 \textrm { or } \alpha_w \geq \frac{m}{d_w} \textrm { for all } w \mid v, \\
0 \textrm { otherwise.}
\end{cases}
\end{aligned}
\]
In particular, $\psi_{m,v}\left(n_v\right) = 0$ whenever $q_v \leq H_v\left(n_v\right) \leq q_v^{m-1}$, hence $\gamma_{\chi,v,r} = 0$ for $1 \leq r \leq m-1$. Further, we see that $\psi_{m,v}\left(n_v\right) =1$ and $H_v\left(n_v\right) = q_v^m$ if and only if there is exactly one place $w_0 \mid v$ such that $\alpha_{w_0} = \frac{m}{d_{w_0}}$ and $\alpha_w = 0$ for $w \neq w_0$, so
$$
\gamma_{\chi,v,m} = \sum_{\substack{w \mid v \\ \deg f_w \mid m}} \chi_w\left(\pi_w\right)^m.
$$
Since $|\psi_{m,v}\left(n_v\right)\chi_v\left(n_v\right)| \leq 1$, we deduce that
$$
|\gamma_{\chi,v,r}| \leq \#\left\{\beta_1, \dots, \beta_d \in \mathbb{Z}_{\geq 0} : \sum_{i=1}^d \beta_i = r\right\} \leq d^r.
$$
Analogously to the proof of Corollary \ref{gftcor}, we deduce for $q_v$ sufficiently large that
\[
\sum_{r=m+1}^\infty\frac{\gamma_{\chi,v,r}}{q_v^{rs}} = O\left(\frac{1}{q_v^{\left(m+1\right)s}}\right). \qedhere
\]
\end{proof}
\begin{proposition} \label{lftcampprop}
For all places $v \not\in S'\left(\om\right)$ with $q_v$ sufficiently large, we have
$$
\widehat{H}_v \left(\psi_{m,v},\chi_v;-s\right) = L_v\left(\chi^m,ms\right)\left(1+ O\left(\frac{1}{q_v^{\left(m+1\right)s}}\right)\right), \quad \Re s > 0.
$$
\end{proposition}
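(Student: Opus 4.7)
The plan is to combine Corollary \ref{lftcampcor} with a direct expansion of $L_v(\chi^m,ms)$ and invoke the hypothesis on $(d,m)$ to see that the two expansions agree up to the asserted error. First I would apply Corollary \ref{lftcampcor} to obtain
\[
\widehat H_v(\psi_{m,v},\chi_v;-s) = 1 + \frac{1}{q_v^{ms}}\sum_{\substack{w\mid v \\ \degree f_w \mid m}}\chi_w(\pi_w)^m + O\bigl(q_v^{-(m+1)s}\bigr).
\]
Next I would expand $L_v(\chi^m,ms) = \prod_{w\mid v}(1 - \chi_w(\pi_w)^m/q_w^{ms})^{-1}$ as a multi-indexed geometric series. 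Since $v \notin S'(\om)$ is unramified in $E/K$, we have $q_w = q_v^{\degree f_w}$; the only contribution at total order $q_v^{-ms}$ in this product comes from choosing a single $w$ with $\degree f_w = 1$, with all other nontrivial terms at order at most $q_v^{-2ms}$ and absorbed into the error. This yields
\[
L_v(\chi^m,ms) = 1 + \frac{1}{q_v^{ms}}\sum_{\substack{w\mid v\\\degree f_w = 1}}\chi_w(\pi_w)^m + O\bigl(q_v^{-(m+1)s}\bigr).
\]

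To match the two sums, I would use that $\degree f_w$ always divides $d = [E:K]$. When $d$ is not prime, the coprimality of $(d,m)$ forces $\degree f_w \mid m \Rightarrow \degree f_w = 1$, so the two indexing sets coincide. When $d$ is prime, $\degree f_w \in \{1,d\}$, and the case $\degree f_w = d$ arises precisely for totally inert $v$. In that degenerate situation the unique $n_w = \sum_{i=0}^{d-1} e_i$ vanishes in $X_*(\overline T)$, so $X_*(T_v) = 0$ and a direct computation gives $\widehat H_v(\psi_{m,v},\chi_v;-s) = \mu(T(\mathcal O_v)) = 1$, while $L_v(\chi^m,ms) = (1 - q_v^{-dms})^{-1} = 1 + O(q_v^{-dms})$ with $dms \geq (m+1)s$ for $d \geq 2$, so the proposition holds trivially. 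In every remaining case the two sums are identical.

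The resulting additive approximation $\widehat H_v(\psi_{m,v},\chi_v;-s) - L_v(\chi^m,ms) = O(q_v^{-(m+1)s})$ is equivalent to the claimed multiplicative form, since $L_v(\chi^m,ms) = 1 + O(q_v^{-ms})$ is bounded away from $0$ for $q_v$ sufficiently large. The main subtlety is the prime-$d$ totally inert case: the literal sum in Corollary \ref{lftcampcor} would spuriously contribute $\chi_w(\pi_w)^m = 1$ (via Note \ref{torhecnote}) at order $q_v^{-ms}$, but the corresponding cocharacter $n_w$ vanishes, so the true value of $\gamma_{\chi,v,m}$ is $0$ and the expansion genuinely matches that of $L_v(\chi^m,ms)$.
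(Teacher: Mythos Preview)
Your proposal is correct and follows essentially the same approach as the paper. Both arguments compare the first nontrivial coefficient of $\widehat H_v(\psi_{m,v},\chi_v;-s)$ from Corollary~\ref{lftcampcor} with that of $L_v(\chi^m,ms)$, using the coprimality hypothesis when $d$ is not prime and a direct computation via $T(K_v)=T(\mathcal O_v)$ in the inert case when $d$ is prime; the only difference is that the paper organises the case split as ``totally split versus $d_v>1$'' rather than ``$d$ prime versus not'', and your explicit remark about the spurious contribution in the inert $d\mid m$ case correctly identifies why Corollary~\ref{lftcampcor} must be bypassed there.
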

\begin{proof}
Let $v \not\in S'\left(\om\right)$ with $q_v$ sufficiently large as in Corollary \ref{lftcampcor}. If $v$ is totally split in $E/K$, then $\deg f_w = 1$ for all $w \mid v$, so Corollary \ref{lftcampcor} gives
$$
\widehat{H}_v\left(\psi_{m,v},\chi_v;-s\right) = 1 + \frac{1}{q_v^{ms}}\sum_{w \mid v} \chi_w^m\left(\pi_w\right) + O\left(\frac{1}{q_v^{\left(m+1\right)s}}\right),
$$
and so we deduce the equality, since
$$
L_v\left(\chi^m,ms\right) = \prod_{w \mid v}\left(1-\frac{\chi_w^m\left(\pi_w\right)}{q_v^{ms}}\right)^{-1} = 1 + \frac{1}{q_v^{ms}}\sum_{w \mid v} \chi_w^m\left(\pi_w\right) + O\left(\frac{1}{q_v^{\left(m+1\right)s}}\right).
$$

Now let $v$ have inertia degree $d_v > 1$ in $E/K$. Then $\degree f_w = d_v \mid d$ for all $w \mid v$. If $d$ and $m$ are coprime, then $d_v \nmid m$, hence $\gamma_{\chi,v,m} = 0$ and the result follows from
$$
L_v\left(\chi^m,ms\right) = \prod_{w \mid v}\left(1-\frac{\chi^m_w\left(\pi_w\right)}{q_v^{d_v ms}}\right)^{-1} = 1 + O\left(\frac{1}{q_v^{d_v ms}}\right).
$$
If $d$ is prime, then $v$ is inert, so $T\left(\mathcal{O}_v\right) = T\left(K_v\right)$, $\widehat{H}_v\left(\psi_{m,v},\chi_v;-s\right) = 1$, and
\[
L_v\left(\chi^m,ms\right) = 1-\frac{1}{q_v^{dms}} = 1 + O\left(\frac{1}{q_v^{\left(m+1\right)s}}\right). \qedhere
\]
\end{proof}
\begin{proposition} \label{gftcampprop}
For any $\chi \in \mathcal{U}$, we have
$$
\widehat{H} \left(\psi_m,\chi;-s\right) = \prod_{v \mid \infty} \widehat{H}_v\left(1,\chi_v;-s\right)L\left(\chi^m,ms\right)\widetilde{G}_{m,\chi}\left(s\right),
$$
where $\widetilde{G}_{m,\chi}\left(s\right)$ is a function which is holomorphic on $\Re s \geq \frac{1}{m}$, $\widetilde{G}_{m,1}\left(\frac{1}{m}\right) \neq 0$ and $\widehat{H} \left(\psi_m,1;-s\right)$ has a simple pole at $s = \frac{1}{m}$.
\end{proposition}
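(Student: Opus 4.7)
The plan is to mirror the regularisation argument used in Corollary \ref{gftcor}, with the much simpler $L$-function $L(\chi^m, ms)$ playing the role of $F_{m,\chi}(s)$, as dictated by Proposition \ref{lftcampprop}. First, since $\psi_{m,v} \equiv 1$ for all $v \in S(\om) \supset S_{\infty}$, the defining Euler product of $\widehat{H}(\psi_m,\chi;-s)$ factors as
\[
\widehat{H}(\psi_m,\chi;-s) = \prod_{v \mid \infty} \widehat{H}_v(1,\chi_v;-s) \cdot \prod_{v \nmid \infty} \widehat{H}_v(\psi_{m,v},\chi_v;-s).
\]
Define, for each non-archimedean $v$,
\[
\widetilde{G}_{m,\chi,v}(s) = \frac{\widehat{H}_v(\psi_{m,v},\chi_v;-s)}{L_v(\chi^m,ms)}, \qquad \widetilde{G}_{m,\chi}(s) = \prod_{v \nmid \infty} \widetilde{G}_{m,\chi,v}(s),
\]
so that the claimed factorisation holds formally.

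Next I would establish holomorphy and uniform bounds for $\widetilde{G}_{m,\chi}(s)$ on the half-plane $\Re s \geq \tfrac{1}{m+1} + \epsilon$ for arbitrary $\epsilon > 0$, which is strictly larger than needed. For the finitely many $v \in S'(\om)$, Lemma \ref{aclem} shows $\widehat{H}_v(\psi_{m,v},\chi_v;-s)$ is holomorphic on $\Re s > 0$, and $L_v(\chi^m,ms)^{-1}$ is entire with an easy trivial bound, so each such local factor is harmless. For the cofinitely many $v \notin S'(\om)$ with $q_v$ large, Proposition \ref{lftcampprop} gives
\[
\widetilde{G}_{m,\chi,v}(s) = 1 + O\!\left(\frac{1}{q_v^{(m+1)s}}\right),
\]
so $\sum_{v} |\widetilde{G}_{m,\chi,v}(s) - 1|$ is dominated by $\sum_{v} q_v^{-(m+1)\sigma}$ for $\sigma = \Re s$, and the latter converges normally whenever $(m+1)\sigma > 1$. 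Normal convergence then yields that $\widetilde{G}_{m,\chi}(s)$ is holomorphic on $\Re s > \tfrac{1}{m+1}$ and, in particular, on the closed half-plane $\Re s \geq \tfrac{1}{m}$ since $m \geq 2$.

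For the non-vanishing at $\chi = 1$, I would argue as in Lemma \ref{nontrivlem}: as a normally convergent product, $\widetilde{G}_{m,1}(\tfrac{1}{m}) = 0$ iff some local factor $\widetilde{G}_{m,1,v}(\tfrac{1}{m})$ vanishes, and the local Fourier transform $\widehat{H}_v(\psi_{m,v},1;-\tfrac{1}{m})$ is a strictly positive integral of a non-negative non-zero continuous function against a Haar measure, hence strictly positive, while $L_v(1,1)^{-1}$ is obviously non-zero. Finally, for the pole, note that $L(\chi^m,ms)$ is entire when $\chi^m$ is not principal (Theorem \ref{hlfthm}), while for $\chi = 1$ we have $L(1,ms) = \zeta_E(ms)$, which possesses a simple pole at $s = \tfrac{1}{m}$. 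Combined with the holomorphy and non-vanishing of the archimedean factors at $s = \tfrac{1}{m}$ (Lemma \ref{aclem} and the analogue of Lemma \ref{nontrivlem} at infinity), this yields the simple pole of $\widehat{H}(\psi_m,1;-s)$ at $s = \tfrac{1}{m}$.

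The main technical point to be careful about is ensuring that the error estimate $\widetilde{G}_{m,\chi,v}(s) = 1 + O(q_v^{-(m+1)s})$ is genuinely uniform over all splitting types of $v$ in $E/K$; this is precisely what Proposition \ref{lftcampprop} delivers, using the coprimality/prime hypothesis on $d$ and $m$ to rule out contributions at partially split non-totally-split places. Everything else is a direct transcription of the weak Campana argument with the one-factor regularisation $L(\chi^m,ms)$ in place of the product $F_{m,\chi}(s)$.
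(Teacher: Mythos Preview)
Your proposal is correct and follows essentially the same approach as the paper: define $\widetilde{G}_{m,\chi,v}(s)=\widehat{H}_v(\psi_{m,v},\chi_v;-s)/L_v(\chi^m,ms)$, use Proposition~\ref{lftcampprop} to get the $1+O(q_v^{-(m+1)s})$ estimate, and then repeat the normal-convergence and non-vanishing arguments of Corollary~\ref{gftcor}. The paper's proof is simply a terser version that refers back to the proof of Corollary~\ref{gftcor} rather than spelling out the details again.
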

\begin{proof}
Defining $\widetilde{G}_{m,\chi,v}\left(s\right) = \frac{\widehat{H}_v \left(\psi_{m,v},\chi_v;-s\right)}{L_v\left(\chi^m,ms\right)}$ for each place $v \nmid \infty$, it follows as in the proof of Corollary \ref{gftcor} that $\widetilde{G}_{m,\chi,v}\left(s\right)$ is holomorphic and bounded uniformly in terms of $\epsilon$ and $v$ on $\Re s \geq \epsilon$ for all $\epsilon > 0$. Since Proposition \ref{lftcampprop} gives
$$
\widetilde{G}_{m,\chi,v}\left(s\right) = 1 + O\left(\frac{1}{q_v^{\left(m+1\right)s}}\right),
$$ 
it follows as in the proof of Corollary \ref{gftcor} that $\widetilde{G}_{m,\chi}\left(s\right)$ is holomorphic and uniformly bounded with respect to $\chi$ for $\Re s \geq \frac{1}{m}$ with $\widetilde{G}_{m,1}\left(\frac{1}{m}\right) \neq 0$. Then, since
$$
L\left(1,ms\right) = \zeta_E\left(ms\right),
$$
we conclude from Theorem \ref{hlfthm} that $\widehat{H} \left(\psi_m,1;-s\right)$ has a simple pole at $s = \frac{1}{m}$.
\end{proof}
\begin{mydef}
For $\Re s \gg 0 $, define the functions
\[
\widetilde{Z}_m : \mathbb{C} \rightarrow \mathbb{C}, \quad
s \mapsto \sum_{x \in \mathbb{P}^{d-1}\left(K\right)}\frac{\psi_m\left(x\right)}{H\left(x\right)^s}, \quad \widetilde{\Omega}_m = \widetilde{Z}_m\left(s\right)\left(s-\frac{1}{m}\right).
\]
\end{mydef}
The proofs of the following two results are analogous to those of their weak Campana counterparts, namely Theorem \ref{zetathm} and Lemma \ref{contriblem} respectively. 
\begin{theorem} \label{zetatildthm}
The function $\widetilde{\Omega}_m\left(s\right)$ admits a holomorphic extension to $\Re s \geq \frac{1}{m}$.
\end{theorem}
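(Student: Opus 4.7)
The plan is to mirror the proof of Theorem \ref{zetathm} almost verbatim, substituting $\phi_m$ for $\psi_m$ and replacing the regularisation $F_{m,\chi}(s)$ with the single $L$-function $L(\chi^m,ms)$ from Proposition \ref{gftcampprop}. The simplification on the regularisation side (one $L$-factor instead of $b(d,m)$) is exactly why only a simple pole appears.

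First I would apply the Poisson summation formula in the form \cite[Cor.~3.36,~p.~64]{FZH}, checking Bourqui's three hypotheses: continuity of $t \mapsto \psi_m(t)/H(t)^s$ is immediate, the $L^1$ property on $\Re s > 1/m$ follows from Proposition \ref{gftcampprop}, and the multiplicative double-bound on an open neighbourhood of the origin holds with $U = \mathcal{K}$ because $\psi_m$ and $H$ are both $\mathcal{K}$-invariant (the former by the lemma immediately following Definition \ref{campdef}, the latter by Lemma \ref{mcslem}). Since $\psi_{m,v}$ is $\mathcal{K}_v$-invariant and $\chi_v$ is non-trivial on $\mathcal{K}_v$ whenever $\chi \notin \mathcal{U}$, character orthogonality as in Corollary \ref{gfttrivcor} shows that only $\chi \in \mathcal{U}$ contribute, giving
$$
\widetilde{Z}_m(s) = \frac{\Res_{s=1}\zeta_K(s)}{d\,\Res_{s=1}\zeta_E(s)}\sum_{\chi \in \mathcal{U}}\widehat{H}(\psi_m,\chi;-s).
$$

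Next, applying Proposition \ref{gftcampprop} to each summand, I obtain
$$
\widehat{H}(\psi_m,\chi;-s) = \prod_{v\mid\infty}\widehat{H}_v(1,\chi_v;-s)\,L(\chi^m,ms)\,\widetilde{G}_{m,\chi}(s),
$$
where $\widetilde{G}_{m,\chi}$ is holomorphic and uniformly bounded in $\chi$ on $\Re s \geq 1/m$. By Theorem \ref{hlfthm}, the factor $L(\chi^m,ms)$ is holomorphic on $\Re s \geq 1/m$ unless $\chi^m$ is principal, in which case it contributes at most a simple pole at $s = 1/m$; multiplication by $(s-1/m)$ removes this pole, so each individual summand of $(s-1/m)\sum_\chi \widehat{H}(\psi_m,\chi;-s)$ is holomorphic on $\Re s \geq 1/m$.

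The main obstacle, as in Theorem \ref{zetathm}, is showing absolute and uniform convergence of this sum on every compact $C \subset \{\Re s \geq 1/m\}$. Following the same regrouping, I decompose $\mathcal{U}$ via the type at infinity map \eqref{taieq} with finite kernel $\mathcal{N}$ and image a full-rank lattice $\mathcal{L} \subset X^*(T_\infty)_{\mathbb{R}}$, write $\psi = \chi_\infty$, and bound the inner finite sum using Lemma \ref{lflem}. As $\mathcal{K} \subset \mathsf{K}_T$ has finite index, there is a uniform conductor bound $q(\chi) < Q$ for $\chi \in \mathcal{U}$, hence $q(\chi^m) < Q^m$, and Lemma \ref{lflem} yields
$$
\Bigl|\sum_{\substack{\chi\in\mathcal{U}\\ \chi_\infty=\psi}} L(\chi^m,ms)\widetilde{G}_{m,\chi}(s)(s-1/m)\Bigr| \ll_{\varepsilon,C} |\mathcal{N}|\,Q^{m\varepsilon}(1+\|\psi\|)^{\varepsilon}
$$
for every $\varepsilon > 0$ and $s \in C$. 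Choosing $\varepsilon < 1/(d-1)$ and invoking Lemma \ref{latsumlem} with $\delta = \varepsilon$ gives the required convergence, and thus $\widetilde{\Omega}_m(s)$ extends holomorphically to $\Re s \geq 1/m$.
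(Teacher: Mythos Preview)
Your proposal is correct and follows essentially the same approach as the paper, which simply declares the proof ``analogous to that of Theorem \ref{zetathm}''. You have accurately identified the necessary substitutions ($\psi_m$ for $\phi_m$, the single factor $L(\chi^m,ms)$ in place of $F_{m,\chi}(s)$, Proposition \ref{gftcampprop} in place of Corollary \ref{gftcor}) and carried through the decomposition via the type-at-infinity map, the conductor bound, Lemma \ref{lflem}, and Lemma \ref{latsumlem} exactly as in the weak Campana case. One small remark: your conductor bound $q(\chi^m)<Q^m$ is looser than necessary (in fact $q(\chi^m)\le q(\chi)<Q$ since the conductor of a power divides the conductor), but this is harmless for the argument.
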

\begin{lemma}
The characters $\chi \in \mathcal{U}$ contributing to the simple pole of $\widetilde{Z}_m\left(s\right)$ at $s = \frac{1}{m}$ are precisely the characters $\chi \in \mathcal{U}[m]$ such that $\widetilde{G}_{m,\chi}\left(\frac{1}{m}\right) \neq 0$.
\end{lemma}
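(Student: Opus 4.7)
The plan is to mirror the proof of Lemma \ref{contriblem} but using the much simpler factorisation afforded by Proposition \ref{gftcampprop}. By that result, for every $\chi \in \mathcal{U}$ we have
$$
\widehat{H}\left(\psi_m,\chi;-s\right) = \prod_{v \mid \infty}\widehat{H}_v\left(1,\chi_v;-s\right)\cdot L\left(\chi^m,ms\right)\cdot \widetilde{G}_{m,\chi}\left(s\right),
$$
where the archimedean factors are holomorphic everywhere and $\widetilde{G}_{m,\chi}$ is holomorphic on $\Re s\geq \frac{1}{m}$. Consequently, the unique factor that can produce a pole at $s=\frac{1}{m}$ is the Hecke $L$-function $L\left(\chi^m,ms\right)$. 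Just as in Theorem \ref{zetathm}, the Poisson summation formula and Corollary \ref{gfttrivcor} express $\widetilde{Z}_m\left(s\right)$ for $\Re s>\frac{1}{m}$ as a constant multiple of $\sum_{\chi \in \mathcal{U}} \widehat{H}\left(\psi_m,\chi;-s\right)$, so the contribution of $\chi$ to the pole of $\widetilde{Z}_m$ at $s=\frac{1}{m}$ is exactly the contribution of this Fourier transform.

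By Theorem \ref{hlfthm}, $L\left(\chi^m,ms\right)$ admits a pole precisely when the Hecke character $\chi^m$ for $E$ is principal, say $\chi^m=\|\cdot\|_E^{i\theta}$, in which case the pole lies at $ms=1+i\theta$. Matching this with $s=\frac{1}{m}$ forces $\theta=0$, so we must have $\chi^m=1$ as a Hecke character for $E$. The key observation needed here is that automorphic characters of the anisotropic torus $T\cong R_{E/K}\mathbb{G}_m/\mathbb{G}_m$ correspond under Note \ref{torhecnote} to Hecke characters for $E$ that are trivial on $\mathbb{A}_K^*$; since the restriction of $\|\cdot\|_E$ to $\mathbb{A}_K^*$ equals $\|\cdot\|_K^d$ (a nontrivial character), any principal Hecke character of $E$ trivial on $\mathbb{A}_K^*$ must indeed be the trivial one. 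Thus the pole arises if and only if $\chi\in\mathcal{U}[m]$.

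Finally, when $\chi\in\mathcal{U}[m]$, we have $L\left(\chi^m,ms\right)=\zeta_E\left(ms\right)$, which has a simple pole at $s=\frac{1}{m}$, so $\widehat{H}\left(\psi_m,\chi;-s\right)$ itself has at worst a simple pole there, with leading coefficient proportional to $\prod_{v\mid\infty}\widehat{H}_v\left(1,\chi_v;-\tfrac{1}{m}\right)\widetilde{G}_{m,\chi}\left(\tfrac{1}{m}\right)$. The characters $\chi \in \mathcal{U}[m]$ that contribute are thus exactly those for which this leading coefficient is nonzero. Since the archimedean factors are nonzero (by an argument analogous to Lemma \ref{nontrivlem}, applied to $\widehat{H}_v\left(1,\chi_v;-s\right)$ at $v\mid\infty$), the nonvanishing condition reduces to $\widetilde{G}_{m,\chi}\left(\tfrac{1}{m}\right)\neq 0$, as claimed.

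The main subtlety is the step reducing principality of $\chi^m$ as a Hecke character of $E$ to the equality $\chi^m=1$; this leverages the anisotropy of $T$ and is cleaner here than its counterpart in Lemma \ref{contriblem}, where a product over $S'(G,m)$ required an application of strong approximation and the Galois-twist analysis culminating in $\chi\in\mathcal{U}^G\cap\mathcal{U}[m]$. Nothing comparably delicate is needed in the present setting.
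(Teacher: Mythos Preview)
Your argument is essentially correct and follows the same route the paper indicates (it merely says the proof is analogous to Lemma \ref{contriblem}). The factorisation from Proposition \ref{gftcampprop}, the identification of $L(\chi^m,ms)$ as the only possible source of a pole, and the reduction to $\chi^m=1$ via anisotropy of $T$ are all sound.

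There is one imprecision worth fixing. You dispose of the archimedean factor $\prod_{v\mid\infty}\widehat{H}_v(1,\chi_v;-\tfrac{1}{m})$ by appealing to ``an argument analogous to Lemma \ref{nontrivlem}'', but that lemma only treats the trivial character. The correct justification is that for $\chi\in\mathcal{U}[m]$ and $v\mid\infty$, the local component $\chi_v$ is trivial on $\mathcal{K}_v=T(\mathcal{O}_v)$ and hence factors through $T(K_v)/T(\mathcal{O}_v)\cong X_*(T_v)_{\mathbb{R}}$; since a real vector space admits no nontrivial continuous characters of finite order, $\chi_v=1$. Then $\widehat{H}_v(1,\chi_v;-\tfrac{1}{m})=\widehat{H}_v(1,1;-\tfrac{1}{m})$, and Lemma \ref{nontrivlem} applies directly. (The paper uses exactly this reasoning later, in the proof of Proposition \ref{campprop}.) This also explains why the statement here omits the archimedean factor while its weak-Campana analogue, Lemma \ref{contriblem}(i), retains it.
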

\begin{proposition} \label{omtildprop}
The limit
$$
\lim_{s \rightarrow \frac{1}{m}}\left(s-\frac{1}{m}\right)\sum_{\chi \in \mathcal{U}[m]}\widehat{H}\left(\psi_m,\chi;-s\right)
$$
is non-zero.
\end{proposition}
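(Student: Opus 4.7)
The plan is to mimic the proof of the weak Campana counterpart, Proposition \ref{lcprop}. By Fubini's theorem and character orthogonality on the finite abelian group $\mathcal{U}[m]$, we have
$$
\sum_{\chi \in \mathcal{U}[m]} \widehat{H}(\psi_m, \chi; -s) = |\mathcal{U}[m]| \int_{T(\mathbb{A}_K)^{\mathcal{U}[m], \psi_m}} \frac{d\mu(t)}{H(t)^s},
$$
where $T(\mathbb{A}_K)^{\mathcal{U}[m], \psi_m}$ denotes the subset of $t$ on which $\psi_m(t) = 1$ and $\chi(t) = 1$ for every $\chi \in \mathcal{U}[m]$. The key step, in analogy with the inclusion $T(\mathbb{A}_K)^{\theta_m} \subset T(\mathbb{A}_K)^{\mathcal{U}_0, \phi_m}$ used in Proposition \ref{lcprop}, is to verify that the Campana indicator $\sigma_m$ from Definition \ref{campdef} satisfies $T(\mathbb{A}_K)^{\sigma_m} \subset T(\mathbb{A}_K)^{\mathcal{U}[m], \psi_m}$.

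For this inclusion, $\sigma_m \leq \psi_m$ is immediate from the definitions. For character triviality, at each $v \in S'(\om)$ the support $\mathcal{K}_v$ lies in the kernel of every $\chi \in \mathcal{U}$, while for $v \notin S'(\om)$ one uses Proposition \ref{btheightprop} to see that $\sigma_{m,v}(t_v) = 1$ forces the image of $t_v$ in $X_*(T_v)$ to have the form $\sum_{w \in W}(m/d_w) n_w$ for some $W \subset \{w \mid v : d_w \mid m\}$; then, using the formula $\chi_v(n_v) = \prod_{w \mid v}\chi_w(\pi_w)^{d_w \alpha_w}$ from the proof of Corollary \ref{lftcampcor}, one obtains $\chi_v(t_v) = \prod_{w \in W}\chi_w(\pi_w)^{m} = 1$ because $\chi^m = 1$ for $\chi \in \mathcal{U}[m]$. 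Positivity of $H(t)^{-s}$ for real $s > 1/m$ therefore yields
$$
\sum_{\chi \in \mathcal{U}[m]}\widehat{H}(\psi_m, \chi; -s) \geq |\mathcal{U}[m]|\, \widehat{H}(\sigma_m, 1; -s).
$$

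The remaining task is to show that $\widehat{H}(\sigma_m, 1; -s)$ has a simple pole at $s = 1/m$ with positive residue. A direct computation of the local factor at $v \notin S'(\om)$ by Proposition \ref{btheightprop}, summing over the subsets $W$ above, gives $\widehat{H}_v(\sigma_{m,v}, 1; -s) = 1 + \gamma_{1,v,m}/q_v^{ms} + O(q_v^{-2ms})$ with the same leading coefficient as in Corollary \ref{lftcampcor}, so the regularisation procedure of Proposition \ref{gftcampprop} carries over with $\psi_m$ replaced by $\sigma_m$ and $\chi = 1$, yielding $\widehat{H}(\sigma_m, 1; -s) = \zeta_E(ms)\tilde{G}_m^{\sigma}(s)$ with $\tilde{G}_m^{\sigma}$ holomorphic on $\Re s \geq 1/m$ and $\tilde{G}_m^{\sigma}(1/m) \neq 0$ (the non-vanishing follows from positivity of the integrand and an analogue of Lemma \ref{nontrivlem}). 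The simple pole of $\zeta_E(ms)$ of positive residue at $s = 1/m$ then delivers the conclusion. The main technical obstacle is verifying the higher-order estimate on $\widehat{H}_v(\sigma_{m,v}, 1; -s)$ uniformly in the splitting behaviour of $v$; this is where the hypothesis that $\gcd(d, m) = 1$ when $d$ is not prime is essential, as it rules out intermediate splitting types and leaves only the totally split and (for $d$ prime) inert cases to treat.
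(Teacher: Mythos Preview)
Your proposal is correct and follows essentially the same route as the paper: character orthogonality to rewrite the sum as an integral over $T(\mathbb{A}_K)^{\mathcal{U}[m],\psi_m}$, the inclusion $T(\mathbb{A}_K)^{\sigma_m}\subset T(\mathbb{A}_K)^{\mathcal{U}[m],\psi_m}$ via the computation $\chi_v(t_v)=\prod_{w}\chi_w(\pi_w)^{d_w\alpha_w}$ with each exponent in $\{0,m\}$, and then regularising $\widehat{H}(\sigma_m,1;-s)$ by $\zeta_E(ms)$ exactly as in Proposition~\ref{gftcampprop}. One small expository point: in your final sentence, the hypothesis $\gcd(d,m)=1$ for non-prime $d$ does not ``rule out intermediate splitting types'' (those are fixed by the arithmetic of $E/K$); rather, it forces $d_v\nmid m$ whenever $d_v>1$, so that $\sigma_{m,v}$ is supported only on $T(\mathcal{O}_v)$ at such places, making the local factor trivially compatible with $L_v(1,ms)=1+O(q_v^{-d_v ms})$.
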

\begin{proof}
By the same reasoning as in the proof of Proposition \ref{lcprop}, we have
$$
\sum_{\chi \in \mathcal{U}[m]}\widehat{H}\left(\psi_m,\chi;-s\right) = \sum_{\chi \in \mathcal{U}[m]}\int_{T\left(\mathbb{A}_K\right)}\frac{\psi_m\left(t\right)\chi\left(t\right)}{H\left(t\right)^s}d\mu = |\mathcal{U}[m]|\int_{T\left(\mathbb{A}_K\right)^{\mathcal{U}[m],\psi_m}}\frac{1}{H\left(t\right)^s}d\mu,
$$
where
$$
T\left(\mathbb{A}_K\right)^{\mathcal{U}[m], \psi_m} = \{t \in T\left(\mathbb{A}_K\right) : \psi_m\left(t\right) = \chi\left(t\right) = 1 \textrm { for all } \chi \in \mathcal{U}[m]\}.
$$
Now, take $\chi \in \mathcal{U}[m]$, $v \not\in S'\left(\om\right)$ non-archimedean. If $\sigma_v\left(t_v\right) = 1$ for some $t_v \in T\left(K_v\right)$, then the image of $t_v$ in $X_*\left(T_v\right)$ is of the form $\sum_{w \mid v} \alpha_w n_w$, where each $\alpha_w$ is either $0$ or $\frac{m}{d_v}$ for $d_v$ the common inertia degree of the places of $E$ over $v$, so
$$
\chi_v\left(t_v\right) = \prod_{w \mid v}\chi_w\left(\pi_w\right)^{d_v\alpha_w} = 1,
$$
since each $d_v \alpha_w$ is $0$ or $m$ and $\chi^0 = \chi^m = 1$.
Then $\chi_v\left(t_v\right) = 1$ for all $\chi \in \mathcal{U}[m]$.
In particular, we deduce that $T\left(\mathbb{A}_K\right)^{\sigma_m} \subset T\left(\mathbb{A}_K\right)^{\mathcal{U}[m], \psi_m}$, where
$$
T\left(\mathbb{A}_K\right)^{\sigma_m} = \{t \in T\left(\mathbb{A}_K\right) : \sigma_m\left(t\right) = 1\}.
$$
Then it suffices to prove that
$$
\lim_{s \rightarrow \frac{1}{m}}\left(s-\frac{1}{m}\right)\widehat{H}\left(\sigma_m,1;-s\right) \neq 0.
$$
Analogously to the proof of Proposition \ref{gftcampprop}, we may deduce that
$$
\widehat{H}\left(\sigma_m,1;-s\right) = \zeta_E\left(ms\right) \widetilde{G}_m\left(s\right)
$$
for $\widetilde{G}_m\left(s\right)$ a function holomorphic on $\Re s \geq \frac{1}{m}$ with $\widetilde{G}_m\left(\frac{1}{m}\right) \neq 0$, so the result follows.
\end{proof}
\begin{proof} [Proof of Theorem \ref{campthm}]
Since $\widetilde{\Omega}_m\left(\frac{1}{m}\right) \neq 0$ by Proposition  \ref{omtildprop}, the result for $S = S\left(\om\right)$ now follows from \cite[Thm.~3.3.2,~p.~624]{RPBH} and Theorem \ref{zetatildthm}, taking
\[
\widetilde{c}\left(\om,m,S\left(\om\right)\right) = \frac{m\Res_{s=1}\zeta_K\left(s\right)}{d\Res_{s=1}\zeta_E\left(s\right)} \lim_{s \rightarrow \frac{1}{m}} \left(s-\frac{1}{m}\right)\sum_{\chi \in \mathcal{U}[m]}\widehat{H}\left(\psi_m,\chi;-s\right).
\]
The result for $S \supset S\left(\om\right)$ follows analogously upon redefining $\psi_{m,v}$ to be identically $1$ for each $v \in S \setminus S\left(\om\right)$.
\end{proof}
\section{Comparison to Manin-type conjecture}
In this section we compare the leading constant in Theorem \ref{campthm} with the Manin--Peyre constant in the conjecture of Pieropan, Smeets, Tanimoto and V\'arilly-Alvarado.
\subsection{Statement of the conjecture}
Let $\left(X,D_{\epsilon}\right)$ be a smooth Campana orbifold over a number field $K$ which is \emph{klt} (i.e.\ $\epsilon_{\alpha} < 1$ for all $\alpha \in \mathcal{A}$) and \emph{Fano} (i.e.\ $-\left(K_X + D_{\epsilon}\right)$ is ample). Let $\left(\mathcal{X},\mathcal{D}_{\epsilon}\right)$ be a regular $\mathcal{O}_{K,S}$-model of $\left(X,D_{\epsilon}\right)$ for some finite set $S \subset \Val\left(K\right)$ containing $S_{\infty}$ (i.e.\ $\mathcal{X}$ regular over $\mathcal{O}_{K,S}$). Let $\mathcal{L} = \left(L,\|\cdot\|\right)$ be an adelically metrised big line bundle with associated height function $H_{\mathcal{L}}: X\left(K\right) \rightarrow \mathbb{R}_{>0}$ (see \cite[\S1.3]{HMT}). For any subset $U \subset X\left(K\right)$ and any $B \in \mathbb{R}_{>0}$, we define
$$
N\left(U,\mathcal{L},B\right) = \#\{P \in U: H_{\mathcal{L}}\left(P\right) \leq B\}.
$$
\begin{mydef}
Let $V$ be a variety over a field $k$ of characteristic zero, and let $A \subset V\left(k\right)$.
We say that $A$ is of \emph{type I} if there is a proper Zariski closed subset $W \subset V$ with $A \subset W\left(k\right)$. We say that $A$ is of \emph{type II} if there is a normal geometrically irreducible variety $V'$ with $\textrm{dim}V' = \textrm{dim}V$ and a finite surjective morphism $\phi: V' \rightarrow V$ of degree $\geq 2$ with $A \subset \phi\left(V'\left(k\right)\right)$. We say that $A$ is \emph{thin} if it is contained in a finite union of subsets of $V\left(k\right)$ of types I and II.  
\end{mydef}
We are now ready to give the statement of the conjecture.
\begin{conjecture} \cite[Conj.~1.1,~p.~3]{CPBH} \label{manconj}
Suppose that $L$ is nef and $\left(\mathcal{X},\mathcal{D}_{\epsilon}\right)\left(\mathcal{O}_{K,S}\right)$ is not thin. Then there exists a thin set $Z \subset \left(\mathcal{X},\mathcal{D}_{\epsilon}\right)\left(\mathcal{O}_{K,S}\right)$ and explicit positive constants $a = a\left(\left(X,D_{\epsilon}\right),L\right)$, $b = b\left(K,\left(X,D_{\epsilon}\right),L\right)$ and $c = c\left(K,S,\left(\mathcal{X},\mathcal{D}_{\epsilon}\right),\mathcal{L},Z\right)$ such that, as $B \rightarrow \infty$, we have
$$
N\left(\left(\mathcal{X},\mathcal{D}_{\epsilon}\right)\left(\mathcal{O}_{K,S}\right) \setminus Z,\mathcal{L},B\right) \sim c B^a\left(\log B\right)^{b-1}.
$$
\end{conjecture}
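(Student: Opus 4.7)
The plan is to lay out, rather than carry out, a proof strategy: Conjecture \ref{manconj} is an open problem in its full generality, known only in isolated geometric settings (e.g.\ vector group compactifications \cite{CPBH}, the toric-orbifold family treated in Theorems \ref{mainthm} and \ref{campthm}), so I would adapt the height zeta function method that works in those cases and indicate where it breaks. The framework is: set $Z_{\mathcal{L}}(s) = \sum_{P} H_{\mathcal{L}}(P)^{-s}$, summed over $(\mathcal{X},\mathcal{D}_{\epsilon})(\mathcal{O}_{K,S}) \setminus Z$ for a candidate thin set $Z$; reduce the desired asymptotic to a meromorphic continuation of $Z_{\mathcal{L}}$ by \cite[Thm.~3.3.2,~p.~624]{RPBH}; and identify the abscissa of convergence, the order of the pole there, and the residue.

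First I would fix the conjectural invariants geometrically: the exponent $a$ should be the orbifold Fujita invariant, $a = \inf\{t \in \mathbb{R} : t[L] + [K_X + D_{\epsilon}] \in \overline{\mathrm{Eff}}(X)\}$, and $b$ should be the codimension of the minimal supported face of $\overline{\mathrm{Eff}}(X)$ containing $a[L] + [K_X+D_{\epsilon}]$, with $Z$ accounting for the accumulating subvarieties of $X$ whose orbifold Fujita invariants exceed $a$. Next, following \cite{NVF}, \cite{RPBH} and Sections \ref{sectionHIF}--\ref{sectionWCP} of this paper, I would encode the Campana integrality condition as a product $\psi_{\epsilon} = \prod_v \psi_{\epsilon,v}$ of continuous local indicator functions on the $K_v$-points of an ambient adelic group $\mathcal{G}$ (e.g.\ an algebraic torus, vector group, or more generally the open orbit of an equivariant compactification), so that
\[
Z_{\mathcal{L}}(s) = \sum_{P \in \mathcal{G}(K)} \frac{\psi_{\epsilon}(P)}{H_{\mathcal{L}}(P)^s}
\]
up to contributions from $Z$, and then apply Poisson summation on $\mathcal{G}(\mathbb{A}_K)/\mathcal{G}(K)$ via \cite[Thm.~3.35,~p.~64]{FZH} to convert the sum into a spectral sum $\sum_{\chi}\widehat{H}(\psi_{\epsilon},\chi;-s)$ over automorphic characters.

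The analytic core would then be the construction of regularisations of the local Fourier transforms by products of Hecke $L$-functions, as in Definition \ref{pseriesdef} and Corollary \ref{gftcor}, followed by a uniform bound permitting termwise interchange with the spectral sum, analogous to the use of Lemma \ref{latsumlem} in the proof of Theorem \ref{zetathm}. The residue at $s=a$ would be extracted by character orthogonality, mirroring Proposition \ref{lcprop}, and should organise itself into Peyre's adelic Tamagawa integral on the klt log-Fano pair $(X,D_{\epsilon})$ against an effective-cone $\alpha$-constant and a Brauer-type $\beta$-constant, giving the conjectured $c$.

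The main obstacle is structural: outside of the handful of geometries listed above, a general Campana orbifold $(X,D_{\epsilon})$ admits no algebraic group action with open orbit, so Poisson summation is unavailable and the indicator $\psi_{\epsilon}$ does not factor through an adelic group. Alternative tools — the circle method (requiring strong equidistribution of solutions), universal torsors (requiring a workable orbifold Cox ring), or motivic integration — each have serious limitations, and it is not even clear how to formulate the appropriate ``orbifold Cox ring'' in full generality. Compounding this, the correct thin set $Z$ has no uniform description: removing exceptional subvarieties that accumulate too many integral points of low height is a case-by-case geometric problem, entangled with understanding how the orbifold Fujita invariant behaves on subvarieties. For these reasons, any proof I would write would necessarily be conditional on further hypotheses (existence of a nice group action, klt condition with specified singularity type, or a specific class of $X$), and in full generality this conjecture should be regarded as a research programme rather than a theorem admitting a short proof sketch.
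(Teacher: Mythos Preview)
Your recognition that Conjecture \ref{manconj} is an open problem is correct, and this matches the paper's treatment: the paper does not attempt to prove this statement at all. It is stated purely as a conjecture imported from \cite{CPBH}, and the surrounding section only compares the leading constant in Theorem \ref{campthm} with the conjectural constant $c$ in the special case of the norm orbifolds, noting in particular (Proposition \ref{campprop} and the discussion in \S7.4) that the two may differ by the contribution of non-trivial characters in $\mathcal{U}[m]$, which raises the question of which thin set $Z$ should absorb this discrepancy. So there is nothing to compare against; your honest assessment that no proof exists in general, together with your outline of the height-zeta-function strategy and its structural limitations, is an appropriate response to being asked to ``prove'' a conjecture.
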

\subsection{Interpretation for norm orbifolds}
The orbifold $\left(\mathbb{P}^{d-1}_K,\Delta^{\om}_m\right)$ in Theorem~\ref{campthm} is klt and Fano. It is smooth precisely when $d=2$. The $\mathcal{O}_{K,S\left(\om\right)}$-model $\left(\mathbb{P}^{d-1}_{\mathcal{O}_{K,S\left(\om\right)}},\mathcal{D}_m^{\om}\right)$ is regular. The Batyrev--Tschinkel height arises from an adelic metrisation $\mathcal{L}$ of $-K_{\mathbb{P}^{d-1}} = \mathcal{O}\left(d\right)$. According to \cite[\S3.3]{CPBH}, we have
$$
c\left(K,S,\left(\mathbb{P}^{d-1}_{\mathcal{O}_{K,S\left(\om\right)}},\mathcal{D}_m^{\om}\right),\mathcal{L},Z\right) = \frac{1}{d}\tau\left(K,S\left(\om\right),\left(\mathbb{P}^{d-1}_K,\Delta^{\om}_m\right),\mathcal{L},Z\right),
$$
where
$$
\tau\left(K,S\left(\om\right),\left(\mathbb{P}^{d-1}_K,\Delta^{\om}_m\right),\mathcal{L},Z\right) = \int_{\overline{\mathbb{P}^{d-1}\left(K\right)}_\epsilon}H\left(t\right)^{1-\frac{1}{m}}d\tau_{\mathbb{P}^{d-1}}.
$$
Here, $\tau_{\mathbb{P}^{d-1}}$ is the Tamagawa measure defined in \cite[Def.~2.8,~p.~372]{NVF}, and $\overline{\mathbb{P}^{d-1}\left(K\right)}_\epsilon$ denotes the topological closure of the Campana points inside $\mathbb{P}^{d-1}\left(\mathbb{A}_K\right)$. If one assumes that weak approximation for Campana points holds for this orbifold (see \cite[Question~3.9,~p.~13]{CPBH}), it follows from the definition of $\tau_{\mathbb{P}^{d-1}}$ (cf.\ \cite[\S9]{CPBH}) that 
$$
\tau = m\frac{\Res_{s=1}\zeta_K\left(s\right)}{\Res_{s=1}\zeta_E\left(s\right)}\lim_{s\rightarrow \frac{1}{m}}\left(s-\frac{1}{m}\right)\widehat{H}\left(\psi_m,1;-s\right).
$$
Given the assumption on weak approximation, the conjectural leading constant is
$$
c\left(K,S,\left(\mathbb{P}^{d-1}_{\mathcal{O}_{K,S\left(\om\right)}},\mathcal{D}_m^{\om}\right),\mathcal{L},Z\right)= \frac{m\Res_{s=1}\zeta_K\left(s\right)}{d\Res_{s=1}\zeta_E\left(s\right)}\lim_{s\rightarrow \frac{1}{m}}\left(s-\frac{1}{m}\right)\widehat{H}\left(\psi_m,1;-s\right).
$$
On the other hand, the leading constant given by Theorem \ref{campthm} in this case is
$$
\widetilde{c}\left(\om,m,S\left(\om\right)\right) = \frac{m\Res_{s=1}\zeta_K\left(s\right)}{d\Res_{s=1}\zeta_E\left(s\right)} \lim_{s \rightarrow \frac{1}{m}} \left(s-\frac{1}{m}\right)\sum_{\chi \in \mathcal{U}[m]}\widehat{H}\left(\psi_m,\chi;-s\right).
$$
We observe that our constant differs from the conjectural one in the potential inclusion of non-trivial characters in the limit.
\subsection{The quadratic case}
We now consider the case $d=2$, in which the orbifold in Theorem \ref{campthm} is smooth. Here, work of Nakahara and the author \cite{HPCP} shows that weak approximation for Campana points holds for any $m \in \mathbb{Z}_{\geq 2}$.

In Theorem \ref{campthm}, it is not clear that there are non-trivial characters contributing to the leading constant and whether their contribution is positive if so. However, we now exhibit an extension for which a non-trivial character contributes positively to the leading constant, and all contributing characters do so positively.
\begin{proposition} \label{campprop}
Let $K = \mathbb{Q}\left(\sqrt{-39}\right)$, $E = \mathbb{Q}\left(\sqrt{-3},\sqrt{13}\right)$ and $m=2$. Choose the $K$-basis $\om = \{1,\frac{1+\sqrt{-3}}{2}\}$ of $E$. Then $S\left(\om\right) = S_{\infty}$, $\#\mathcal{U}[2] > 1$, and for every $\chi \in \mathcal{U}[2]$, we have $\lim_{s\rightarrow\frac{1}{2}}\left(s-\frac{1}{2}\right)\widehat{H}\left(\psi_2,\chi;-s\right) > 0$. 
\end{proposition}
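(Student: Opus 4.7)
The plan proceeds in three steps, with Step 2 being the crux. In Step 1 I verify $S(\om) = S_\infty$ directly from Definition \ref{somdef}: the basis element $\omega_1 = (1+\sqrt{-3})/2$ is a primitive sixth root of unity satisfying $\omega_1^2 = \omega_1 - 1$, and the non-trivial $\sigma \in \Gal(E/K)$ (which sends $\sqrt{-3}$ and $\sqrt{13}$ to their negatives so as to fix $\sqrt{-39}$) gives $\sigma(\omega_1) = \omega_0 - \omega_1$. Together with $1 = \omega_0$, all of the coefficients $a^{ij}_k$, $b^g_k$, $c_k$ lie in $\mathbb{Z}$, so $S(\om) = S_\infty$, and $N_{\om}(x_0, x_1) = x_0^2 + x_0 x_1 + x_1^2$.

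In Step 2 I establish $\#\mathcal{U}[2] > 1$ via a structural argument. By conductor--discriminant, $\mathrm{disc}(E/\mathbb{Q}) = 3 \cdot 13 \cdot 39 = 39^2 = \mathrm{disc}(K/\mathbb{Q})^2$, so the tower formula forces $\mathrm{disc}(E/K) = \mathcal{O}_K$; moreover, the two complex places of $E$ both lie above the unique complex place of $K$. Hence $E/K$ is everywhere unramified. Combined with the integrality and primitivity of $N_{\om}$ over $\mathcal{O}_K$, one verifies $S'(\om) = S_\infty$, so $\mathcal{K} = \mathsf{K}_T$. Since $T$ is anisotropic of dimension $1$ and $K$ has one complex place at which the decomposition group is trivial, $\dim_{\mathbb{R}} X^*(T_\infty)_{\mathbb{R}} = 1$. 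By the Note following~\eqref{taieq}, the homomorphism $T(\mathbb{A}_K)/T(K)\mathsf{K}_T \to X^*(T_\infty)_{\mathbb{R}}$ has finite kernel $A$ and image a full-rank lattice, so the source is a discrete abelian group $A \oplus \mathbb{Z}$. Taking Pontryagin duals gives $\mathcal{U} \cong A^\vee \oplus S^1$, hence $\mathcal{U}[2] \cong A[2] \oplus \mathbb{Z}/2\mathbb{Z}$, of size at least $2$.

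In Step 3 I prove positivity of each limit. By Proposition \ref{gftcampprop}, the limit factors as $\widehat{H}_\infty(1, \chi_\infty; -1/2) \cdot \tfrac12 \Res_{s=1}\zeta_E(s) \cdot \widetilde{G}_{2,\chi}(1/2)$. Since $T(K_\infty) = \mathbb{C}^*$ and $\mathbb{C}^*/S^1 \cong \mathbb{R}_{>0}$ is divisible, any quadratic character trivial on $S^1$ is trivial; so $\chi_\infty = 1$ and $\widehat{H}_\infty(1,1;-1/2) > 0$ as the integral of a positive function. The Dedekind residue is positive. For the absolutely convergent Euler product $\widetilde{G}_{2,\chi}(1/2) = \prod_{v \nmid \infty}\widehat{H}_v(\psi_{2,v}, \chi_v; -1/2)/L_v(1,1)$: at inert $v$, $X_*(T_v) = X_*(\bar{T})^{G_v} = 0$ forces $T(K_v) = T(\mathcal{O}_v)$ and $\chi_v = 1$, giving a positive factor; at split (hence unramified) $v$, one has $T(K_v)/T(\mathcal{O}_v) \cong \mathbb{Z}$ with $H_v(n) = q_v^{|n|}$, the Campana indicator $\psi_{2,v}(n)$ equals $1$ if $n \in \{0\} \cup \{|n| \geq 2\}$ and $0$ otherwise, and summation against $\chi_v(n) = \varepsilon^n$ with $\varepsilon = \chi_v(\pi_v) \in \{\pm 1\}$ yields
\[
\widehat{H}_v(\psi_{2,v}, \chi_v; -s) = 1 + \frac{2q_v^{-2s}}{1 - \varepsilon q_v^{-s}},
\]
which is positive at $s = 1/2$ since $q_v \geq 2$ implies $|\varepsilon q_v^{-1/2}| < 1$. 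Division by the positive quantity $L_v(1,1) = (1-q_v^{-1})^{-2}$ preserves positivity, so each local factor of $\widetilde{G}_{2,\chi}(1/2)$ is positive and the product is positive.

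The main obstacle lies in Step 2, specifically the verification that $S'(\om) = S_\infty$: the equality $H'_v = H_v$ at every non-archimedean $v$ should follow from primitivity and integrality of $N_{\om}$ together with the toric identification of the Batyrev--Tschinkel metric, but requires a careful local check at primes above $3$ where the underlying quadratic form has discriminant $-3$.
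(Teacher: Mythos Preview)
Your Step 2 contains the decisive error. Since $T$ is anisotropic, $T(\mathbb{A}_K)/T(K)$ is \emph{compact}, and therefore its character group---and a fortiori the subgroup $\mathcal{U}$---is \emph{discrete}. The type-at-infinity map of~\eqref{taieq} and the Note thereafter should be read on characters (as in the proof of Theorem~\ref{zetathm}): one has $\mathcal{U}\to X^*(T_\infty)_{\mathbb{R}}$ with finite kernel $\mathcal{N}$ and lattice image, whence $\mathcal{U}\cong \mathcal{N}\times\mathbb{Z}$, not $A^\vee\oplus S^1$. Consequently $\mathcal{U}[2]=\mathcal{N}[2]$, and your structural argument yields nothing. (Equivalently: any $\chi$ with $\chi^2=1$ has $\chi_\infty=0$, so $\mathcal{U}[2]\subset\mathcal{N}$.) The paper supplies the missing arithmetic input: $\Cl(E)\cong\mathbb{Z}/2\mathbb{Z}$, so the character of $\mathbb{A}_E^*$ cut out by the Hilbert class field of $E$ is a nontrivial unramified quadratic Hecke character which is trivial on $\mathbb{A}_K^*$, hence lies in $\mathcal{U}[2]$.

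Your hoped-for equality $S'(\om)=S_\infty$ is also false, and this breaks Step~3 at the place $v_0$ of $K$ above~$3$. The paper computes that $v_0(\theta_1-\theta_2)=1$ (where $\theta_1,\theta_2$ are the $v_0$-adic roots of $z^2-z+1$), which forces $H'_{v_0}\neq H_{v_0}$; in particular $\mathcal{K}_{v_0}\subsetneq T(\mathcal{O}_{v_0})$ and $\psi_{2,v_0}$ does \emph{not} factor through $T(K_{v_0})/T(\mathcal{O}_{v_0})\cong\mathbb{Z}$, so your split-place formula does not apply there. The paper handles $v_0$ by a direct valuation argument showing that $\psi_{2,v_0}$ is exactly the indicator of $\mathcal{K}_{v_0}$, whence $\widehat{H}_{v_0}(\psi_{2,v_0},\chi_{v_0};-\tfrac12)=\mu_{v_0}(\mathcal{K}_{v_0})>0$. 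Apart from this bad place, your Step~3 computation and positivity argument agree with the paper's.
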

\begin{proof}
Writing $a \cdot 1 + b \cdot \frac{1 + \sqrt{-3}}{2}$ as $\left(a,b\right)$ and $G = \Gal\left(E/K\right)$ as $\{1_G,g\}$, we have
$$
\left(1,0\right)^2 = \left(1,0\right), \, \left(1,0\right) \cdot \left(0,1\right) = \left(0,1\right), \, \left(0,1\right)^2 = \left(-1,1\right),
$$
$$
1_G\left(\left(1,0\right)\right) = \left(1,0\right), \, 1_G\left(\left(0,1\right)\right) = \left(0,1\right), \, g\left(\left(1,0\right)\right) = \left(1,0\right), \, g\left(\left(0,1\right)\right) = \left(1,-1\right),
$$
and clearly $1 = \left(1,0\right)$, hence $S\left(\om\right) = S_{\infty}$. Note that $N_{\om}\left(x,y\right) = x^2 + xy + y^2$.

Since $\Cl\left(E\right) \cong \mathbb{Z}/2\mathbb{Z}$, the Hilbert class field $M$ of $E$ is quadratic over $E$. We obtain the unramified Hecke character $\chi_M$ of $E$, which is defined for all split $w \in \Val\left(E\right)$ by $\chi_{M,w}\left(\pi_w\right) = -1$ and is trivial at all other places. Since $\chi_M$ is trivial on $\mathbb{A}_K^*$, it may be viewed inside $\mathcal{U}[2]$, hence $\#\mathcal{U}[2] > 1$. 

Let $\chi \in \mathcal{U}[2]$. To show that $\lim_{s\rightarrow\frac{1}{2}}\left(s-\frac{1}{2}\right)\widehat{H}\left(\psi_2,\chi;-s\right) > 0$, it suffices to show that $\widehat{H}_v\left(\psi_{2,v},\chi_v;-\frac{1}{2}\right) > 0$ for all $v \in \Val\left(K\right)$. If $v \mid \infty$, then $\widehat{H}_v\left(\psi_{2,v},\chi_v;-\frac{1}{2}\right) = \widehat{H}_v\left(1,1;-\frac{1}{2}\right)  > 0$, as $\chi_v$ gives a continuous homomorphism from $T\left(K_v\right)/T\left(\mathcal{O}_v\right) \cong \mathbb{R}_{>0}$ to $\mu_2$, and $\mathbb{R}_{>0}$ has no proper open subgroups. If $v$ is inert, then we have $\widehat{H}_v\left(\psi_{2,v},\chi_v;-\frac{1}{2}\right) = 1$. If $v$ is not inert, then $N_{\om}\left(x,y\right) = \left(x+\theta_1 y\right) \left(x+\theta_2 y\right)$ for $\theta_1,\theta_2 \in \mathcal{O}_v$ roots of $z^2 - z + 1$. By Proposition \ref{btheightprop}, we have $H_v = H_v'$ if and only if there are no $\left(a,b\right) \in \left(K_v^*\right)^2$ with $\min\{v\left(a\right),v\left(b\right)\} = 0$ such that $v\left(a+\theta_1 b\right), v\left(a + \theta_2 b\right) \geq 1$. If $v\left(a+\theta_1 b\right), v\left(a + \theta_2 b\right) \geq 1$, then we deduce from the equalities
$$
\theta_1\left(a + \theta_2 b\right) - \theta_2 \left(a + \theta_1 b\right) = \left(\theta_1 - \theta_2\right) a, \quad \left(a + \theta_1 b\right) - \left(a + \theta_2 b\right) = \left(\theta_1 - \theta_2\right)b,  
$$
that $v\left(a\right), v\left(b\right) \geq 1 - v\left(\theta_1 - \theta_2\right)$. Since $\min\{v\left(a\right),v\left(b\right)\} = 0$, we have $H_v' \neq H_v$ if and only if $v\left(\theta_1 - \theta_2\right) \geq 1$. Since $\left(\theta_1 - \theta_2\right)^2 = -3$, the only such place is the unique place $v_0$ of $K$ above $3$, and $v_0\left(\theta_1 - \theta_2\right) = 1$.

For any split place $v \neq v_0$, we have $\mathcal{K}_v = T\left(\mathcal{O}_v\right)$ and $\psi_{2,v} = \phi_{2,v}$, so
\[
\widehat{H}_v\left(\psi_{2,v},\chi_v;-\frac{1}{2}\right) = 1 + \sum_{n=2}^{\infty}\frac{c_{\chi,v,n} - c_{\chi,v,n-2}}{q_v^{\frac{n}{2}}}
\]
by Proposition \ref{truncprop}.
In fact, for $w_1$ and $w_2$ the places of $E$ over $v$, we have $\chi_{w_2}\left(\pi_{w_2}\right) = \chi_{w_1}\left(\pi_{w_1}\right)^{-1} \in \{1,-1\}$, hence $c_{\chi,v,n} - c_{\chi,v,n-2} = 2\chi_{w_1}\left(\pi_{w_1}\right)^n$, so
$$
\widehat{H}_v\left(\psi_{2,v},\chi_v;-\frac{1}{2}\right) = 1 + \sum_{n=2}^{\infty}\frac{2\chi_{w_1}\left(\pi_{w_1}\right)^n}{q_v^{\frac{n}{2}}} = 1 + \frac{2}{q_v} \left(\frac{1}{1-q_v^{-\frac{1}{2}}\chi_{w_1}\left(\pi_{w_1}\right)}\right) > 0.
$$

It only remains to check that $\widehat{H}_{v_0}\left(\psi_{2,{v_0}},\chi_{v_0};-\frac{1}{2}\right) > 0$. We will make use of the following property of valuations:
\begin{equation} \label{valpropeq}
v_0\left(\alpha + \beta\right) \geq \min\{v_0\left(\alpha\right),v_0\left(\beta\right)\}, \, \textrm{with equality if $v_0\left(\alpha\right) \neq v_0\left(\beta\right)$.}
\end{equation}
 Assume that, for $a,b \in \left(K_{v_0}^*\right)^2$ as above, we have $v_0\left(a+\theta_2 b\right) \geq 2$. We claim that $v_0\left(a+\theta_1 b\right) = 1$. First, we deduce from \eqref{valpropeq} that
\begin{equation*}
\begin{aligned}
2 \leq v_0\left(a + \theta_2 b\right) & = v_0\left(\left(a + \theta_1 b\right) + \left(\theta_1 - \theta_2\right)b\right) \\
 & \geq \min\{v_0\left(a + \theta_1 b\right), v_0\left(\left(\theta_1 - \theta_2\right) b\right)\},
\end{aligned}
\end{equation*}
with equality if $v_0\left(a + \theta_1 b\right) \neq v_0\left(\left(\theta_1 - \theta_2\right) b\right)$. Since $v_0\left(\left(2\theta_i - 1\right)^2\right) = v_0\left(-3\right) = 2$, we have $v_0\left(2\theta_i - 1 \right) = 1$, so $v_0 \left(\theta_i\right) = v_0\left(2\theta_i\right) = v_0\left(1\right) =0$ by \eqref{valpropeq}, hence $\min\{v_0\left(a\right),v_0\left(\theta_2 b\right)\} = \min\{v_0\left(a\right),v_0\left(b\right)\} = 0$.
Then, since $v_0\left(a+\theta_2 b\right) \geq 2$, it follows that $v_0\left(a\right) = v_0\left(\theta_2 b\right)$. We deduce that $v_0\left(a\right) = v_0\left(\theta_2 b\right) = v_0\left(b\right)$, so $v_0\left(a\right) = v_0\left(b\right) = \min\{v_0\left(a\right),v_0\left(b\right)\} = 0$. Since $v_0\left(\theta_1 - \theta_2\right) = 1$, we have $v_0\left(\left(\theta_1 - \theta_2\right)b\right) = 1$. It follows that $v_0\left(a + \theta_1 b\right) = 1$ by \eqref{valpropeq}.

We deduce that $\psi_{2,v_0}\left(t_{v_0}\right) = 1$ if and only if $t_{v_0} \in \mathcal{K}_{v_0}$, hence
$$
\widehat{H}_{v_0}\left(\psi_{2,{v_0}},\chi_{v_0};-\frac{1}{2}\right) = \int_{\mathcal{K}_{v_0}}d\mu_{v_0} > 0;
$$
positivity follows since $\mathcal{K}_v \subset T\left(\mathcal{O}_v\right)$ is of finite index for all $v \in \Val\left(K\right)$.
\end{proof}
\subsection{Possible thin sets}
Assuming the truth of Conjecture \ref{manconj}, the question arises of which thin set $Z$ should be removed in the setting of Proposition \ref{campprop}. Informally, its removal should remove the contribution of all non-trivial characters $\chi \in \mathcal{U}[2]$. One might therefore postulate that, for each non-trivial character $\chi \in \mathcal{U}[2]$, there is a finite morphism $\varphi_{\chi}: C_{\chi} \rightarrow \mathbb{P}^1_K$, where $C_{\chi}$ is a smooth projective curve, and $Z = \bigcup_{\chi \in \mathcal{U}[2]}\varphi_{\chi}\left(C_\chi\left(K\right)\right)$. By the height bounds in \cite[\S9.7]{LMWT}, we would have $C_\chi \cong \mathbb{P}^1_K$ and $\degree\left(\varphi_\chi\right) =2$, making the morphisms $\varphi_{\chi}$ degree-two endomorphisms of $\mathbb{P}^1_K$. However, it is not clear how one should construct such endomorphisms. This may be an interesting direction to pursue in future work.

\end{document}